\documentclass[final,onefignum,onetabnum]{siamart171218}


\usepackage{tikz-cd}
\usepackage{amsfonts}
\usepackage{graphicx}
\usepackage{epstopdf}
\usepackage{algorithmic}
\ifpdf
  \DeclareGraphicsExtensions{.eps,.pdf,.png,.jpg}
\else
  \DeclareGraphicsExtensions{.eps}
\fi


\newcommand{\cthree}{c_2}

\newsiamremark{remark}{Remark}
\newsiamremark{hypothesis}{Hypothesis}
\newsiamremark{problem}{Problem}
\crefname{hypothesis}{Hypothesis}{Hypotheses}
\newsiamthm{claim}{Claim}

\headers{The imaginary part of the scattering Green function}{A. D. Agaltsov, T. Hohage, and R. G. Novikov}

\title{The imaginary part of the scattering Green function: monochromatic relations to the real part and uniqueness results for inverse problems}

\author{Alexey D. Agaltsov\thanks{Max-Planck Institute for Solar Systems Research, 
Justus-von-Liebig-Weg 3, 37077 G\"ottingen, Germany 
  (\email{agaltsov@mps.mpg.de}).}
\and Thorsten Hohage\thanks{Institute for Numerical and 
Applied Mathematics, University of G\"ottingen, Lotzestr. 16-18, 37083 G\"ottingen, Germany 
  (\email{hohage@math.uni-goettingen.de}) and Max-Planck Institute for Solar Systems Research.}
\and Roman G. Novikov\thanks{ CMAP, Ecole Polytechnique, CNRS, Universit\'e Paris-Saclay, 91128 Palaiseau, France; and IEPT RAS, 117997 Moscow, Russia
  (\email{novikov@cmap.polytechnique.fr}).}
 }

\usepackage{amsopn}


\ifpdf
\hypersetup{
  pdftitle={The imaginary part of the scattering Green function},
  pdfauthor={A. D. Agaltsov, T. Hohage, and R. G. Novikov}
}
\fi




\begin{document}

\maketitle

\begin{abstract}
  For many wave propagation problems with random sources it has been demonstrated  that cross correlations of wave fields are proportional to the imaginary part of the Green function of the underlying wave equation. This leads to the inverse problem to recover coefficients of a wave equation from the imaginary part of the Green function on some measurement manifold. In this paper we prove, in particular, local uniqueness results for the Schr\"odinger equation with one frequency and for the acoustic wave equation with unknown density and sound speed and two frequencies. As the main tool of our analysis, we establish new algebraic identities between the real and the imaginary part of Green's function, which 
in contrast to the well-known Kramers-Kronig relations involve only one frequency. 
\end{abstract}

\begin{keywords}
  inverse scattering problems,  uniqueness for inverse problems, passive imaging, 
correlation data, imaginary part of Green's function
\end{keywords}

\begin{AMS}
35R30, 
 	35J08, 
35Q86,   
 	78A46 
\end{AMS}

\section{Introduction}

In classical inverse scattering problems one considers a known deterministic source or incident wave 
and aims to reconstruct a scatterer (e.g.\ the inhomogeniety of a medium) from measurements of scattered waves. 
In the case of point sources this amounts to measuring the Green function of the underlying 
wave equation on some observation manifold. 
From the extensive literature on such problems we only mention 
uniqueness results in \cite{novikov:88,nachman:88,bukhgeim:08,FKSU:07,AN:15}, stability results in 
\cite{stefanov:90,HH:01,IN:13}, and the books \cite{PS:02,CK:13} 
concerning many further aspects. 

Recently there has been a growing interest in inverse wave propagation problem with \emph{random} sources. 
This includes passive imaging in seismology (\cite{snieder_etal:09}), ocean acoustics (\cite{BSS:08}),  
ultrasonics (\cite{WL:01}), and local helioseismology (\cite{gizon_etal:17}). 
It is known that in such situations cross correlations of randomly excited 
waves contain a lot of information about the medium. In particular, it has been demonstrated both theoretically 
and numerically that under certain assumptions such cross correlations are proportional to the imaginary 
part of the Green function in the frequency domain. This leads to inverse problems where some coefficient(s) 
in a wave equation have to be recovered given only the imaginary part of Green's function. The purpose 
of this paper is to prove some first uniqueness results for such inverse problems. For results on related problems in the time domain see, e.g., \cite{GP:09} and references therein.

Recall that for a random solution $u(x,t)$ of a wave equation modeled as a stationary random process, 
the empirical cross correlation function over an interval $[0,T]$ with time lag $\tau$ is defined by 
\[
C_T(x_1,x_2,\tau) := \frac{1}{T}\int_0^T u(x_1,t)u(x_2,t+\tau)\,dt, \quad \tau \in \mathbb R,
\]
In numerous papers it has been demonstrated that under certain conditions the time derivative of the 
cross correlation function is proportional to the symmetrized outgoing Green function
\begin{equation*}
\frac{\partial}{\partial \tau} \mathbb{E}\left[C_T(x_1,x_2,\tau)\right] 
\sim - [G(x_1,x_2,\tau)-G(x_1,x_2,-\tau)], \quad \tau \in \mathbb R.
\end{equation*}
Taking a Fourier transform of the last equation with respect to $\tau$ one arrives at the relation 
\begin{equation*}
\mathbb{E}\left[\widehat{C_T}(x_1,x_2,k)\right] 
\sim \frac{1}{ki}\left[G^+(x_1,x_2,k)-\overline{G^+(x_1,x_2,k)}\right]
= \frac{2}{k} \Im G^+(x_1,x_2,k), \quad k \in \mathbb R.
\end{equation*}
Generally speaking, these relations have been shown 
to hold true in situations where the energy is equipartitioned, e.g. in an open domain the recorded signals 
are a superposition of plane waves in all directions with uncorellated and identically distributed amplitudes or in a bounded domain that amplitudes of normal modes are uncorrelated and identically distributed, see 
\cite{GP:09,RSKR:05,gizon_etal:17,snieder:07,SL:13}. This condition is fulfilled if the sources are 
uncorrelated and appropriately distributed over the domain or if there is enough scattering. 

This paper has mainly been motivated by two inverse problems in local helioseismology and in ocean tomography. 
In both cases we consider the problem of recovering the density $\rho$ and the compressibility 
$\kappa$ (or equivalently the sound velocity $c=1/\sqrt{\rho\kappa}$) in the acoustic wave equation  
\begin{align}\label{in.ac}
 \nabla \cdot \big( \tfrac{1}{\rho(x)} \nabla p\big) + \omega^2 \kappa(x) p = f, \quad x \in \mathbb R^d, \; d \geq 2,
\end{align}
with random sources $f$. We assume that correlation data proportional to the imaginary part of 
Green's function for this differential equation are available on the boundary of a bounded domain 
$\Omega\subset \mathbb R^d$ for two different values of the frequency $\omega > 0$ and that $\rho$ and 
$\kappa$ are constant outside of $\Omega$. As a main result we will show that $\rho$ and $\kappa$ are 
uniquely determined by such data in some open neighborhood of any reference model 
$(\rho_0,\kappa_0)$. 

Let us first discuss the case of helioseismology in some more detail: Data on the line of sight velocity of 
the solar surface have been collected at 
high resolution for several decades by satellite based Doppler shift measurements (see \cite{GBS:10}). 
Based on these data, correlations of acoustic waves excited by turbulent convection can be computed, which 
are proportional to the imaginary part of Green's functions under assumptions mentioned above.   
These data are used to reconstruct quantities in the interior of the Sun such as sound velocity, 
density, or flows (see e.g.~\cite{HGS:16}). The aim of this paper is to contribute to the theoretical foundations of such reconstruction method by showing local uniqueness in the simplified model above. 

In the case of ocean tomography we consider measurements of correlations of ambient noise by 
hydrophones placed at the boundary of a circular area of interest. If the ocean is modeled as 
a layered separable waveguide, modes do not interact, and each horizontal mode satisfies the two-dimensional 
wave equation of the form \cref{in.ac} (see \cite{BSS:07,BSS:08}). 

The problem above can be reduced to the following simpler problem of independent interest: 
Determine the real-valued potential $v$ in the Schr\"odinger equation
\begin{align}\label{in.eq}
  -\Delta \psi + v(x) \psi = k^2 \psi, \quad x \in \mathbb R^d, \; d \geq 2,\; k > 0
\end{align}
given the imaginary part of the outgoing Green function $G^+_v(x,y,k)$ for one $k>0$ and all $x,y$ on the 
boundary of a domain containing the support of $v$. This problem is a natural fixed energy version of 
the multi-dimensional inverse spectral problem formulated originally by M.G.~Krein, I.M.~Gelfand and 
B.M.~Levitan at a conference on differential equations in Moscow
in 1952 (see \cite{berezanskii:58}).

In this connection recall that the Schr\"odinger operator admits the following spectral decomposition in $L^2(\mathbb R^d)$:
\begin{equation}
 \begin{gathered}
  -\Delta + v(x) = \int_0^\infty \lambda^2 \, dE_\lambda + \sum\nolimits_{j=1}^N E_j e_j \otimes \overline e_j, \\
  dE_\lambda = \tfrac 2 \pi \Im R^+_v(\lambda)\lambda d\lambda,
 \end{gathered}
\end{equation}
where $dE_\lambda$ is the positive part of the spectral measure for $-\Delta+v(x)$, $E_j$ are non-positive eigenvalues of $-\Delta + v(x)$ corresponding to normalized eigenfunctions $e_j$, known as bound states, and $R^+_v(\lambda) = (-\Delta+v-\lambda^2-i0)^{-1}$ is the limiting absorption resolvent for $-\Delta+v(x)$, whose Schwartz kernel is given by $G^+_v(x,y,\lambda)$,  
see, e.g., \cite[Lem.14.6.1]{hoermander:83}.

The plan of the rest of this paper is as follows: In the following section we present our main results, 
in particular algebraic relations between $\Im G^+_v$ and $\Re G^+_v$ on $\partial\Omega$ at fixed $k$ 
(\cref{re.thm.ABr} and \cref{re.thm.AtBtr}) and local uniqueness results given only the imaginary part 
of Green's function (\cref{un.thm.sch} and \cref{un.thm.hel}). 
The remainder of the paper is devoted to the proof of these results (see \cref{in.fig.sch}). After discussing the mapping 
properties of some boundary integral operators in \cref{sec:mapping} we give 
the rather elementary proof of the relations between $\Im G^+_v$ and $\Re G^+_v$ at fixed $k$ 
in \cref{sec:imreG}. By these relations, $\Re G^+_v$ is uniquely determined by $\Im G^+_v$ up to the 
signs of a finite number of certain eigenvalues. To fix these signs, we will have to derive an inertia theorem 
in \cref{sec:inertia}, before we can finally show in \cref{sec:uniqueness} that 
$\Re G^+_v$ is locally uniquely determined by $\Im G^+_v$ and appeal to known uniqueness results for 
the full Green function to complete the proof of our uniqueness theorems. 
Finally, in \cref{un.lem.genp} we discuss the assumptions of our uniqueness theorems before the paper 
ends with some conclusions. 

\begin{figure}[h]
	\begin{center}
 \begin{tikzcd}[ampersand replacement=\&, column sep=small]
	    \&[.8em] \Im \mathcal G_v \arrow[d,equal] \& T B T^* \arrow[d,equal] \&[1.4em] \&[1.3em] TAT^* \arrow[d,equal] \& \Re \mathcal G_v \arrow[d,equal] \\[-1em]
	    \Im G^+_v \arrow[r]{}{\text{\cref{in.G0}}}[swap]{\text{\cref{re.ABQ}}} \&[.6em] B \arrow[r,"\text{\cref{re.AtBt}}"] \arrow[drr,bend right=14] \&[.6em] \widetilde B \arrow[r,"\text{Thm.\ref{re.thm.AtBtr}}"] \& \text{$|\widetilde A|$} \ar[r]{}{\text{Prp.\ref{un.prp.hel}}}[swap]{\text{Prp.\ref{un.prp.sch}}} \& \widetilde A \arrow[r,"\text{$T$ inj.}"] \& A \arrow[dll,bend left=15] \\[-1em]
		\&\&\& \mathcal G_v \arrow[d,"\text{\cite{novikov:88,berezanskii:58}}"] \&\&  \\
		\&\&\& v \&\& 
 \end{tikzcd}\end{center}
 \caption{Schema of demonstration of \cref{un.thm.sch,un.thm.hel}}\label{in.fig.sch}
 \end{figure}

\section{Main results}
\label{sec:main}

For the Schr\"odinger equation \cref{in.eq} we will assume that 
 \begin{align}
\label{in.v}
  & v \in L^\infty(\Omega,\mathbb R), \qquad 
   v = 0 \quad \text{on $\mathbb R^d \setminus \Omega$},\\
	\label{in.dom}
	& \text{$\Omega$ is an open bounded domain in $\mathbb R^d$ with $\partial\Omega \in C^{2,1}$},
 \end{align}
where by definition a $\partial\Omega \in C^{2,1}$ means that $\partial\Omega$ is locally a graph of a $C^2$ function with Lipschitz continuous second derivatives, see \cite[p.90]{mclean:00} for more details.

Moreover, we suppose that
\begin{align}\label{in.Dir}
 &\text{$k^2$ is not a Dirichlet eigenvalue of $-\Delta+v(x)$ in $\Omega$}. 
\end{align}
For equation \cref{in.eq} at fixed $k>0$ we consider the outgoing Green function $G^+_v = G^+_v(x,y,k)$, which is for any $y\in {\mathbb R}^d$ the solution to the following problem:
\begin{subequations}\label{in.G+}
\begin{align}
  &(-\Delta + v  - k^2 ) G^+_v(\cdot,y,k) = \delta_y, \\
	\label{eq:Sommerfeld}
  &\bigl(\tfrac{\partial}{\partial |x|} - i k \bigr) G^+_v(x,y,k) = o\bigl( |x|^\frac{1-d}{2} \bigr), \quad |x|\to+\infty.
\end{align}
\end{subequations}
Recall that $G_v(x,y,k) = G_v(y,x,k)$ by the reciprocity principle.

In the present work we consider, in particular, the following problem:

\begin{problem}\label{in.prob.im} Determine the coefficient $v$ in the Schr\"odinger equation \cref{in.eq} from $\Im G^+_v(x,y,k)$ given at all $x$, $y \in \partial \Omega$, at fixed $k$.
\end{problem}

As discussed in the introduction, 
mathematical approaches to \cref{in.prob.im} are not yet well developed in the literature in contrast with the case of the following inverse problem from $G^+_v$ (and not only from $\Im G^+_v$):
\begin{problem}\label{in.prob.full} Determine the coefficient $v$ in the Schr\"odinger equation \cref{in.eq} from $G^+_v(x,y,k)$ given at all $x$, $y \in \partial\Omega$, at fixed $k$. 
\end{problem}

For the acoustic equation \cref{in.ac} we impose the assumptions that
\begin{subequations}
\begin{align}
\label{in.rho}
  &\rho \in W^{2,\infty}(\mathbb R^d,\mathbb R), \qquad 
  &&\rho(x) > 0, \quad x \in \Omega, \qquad 
  &&\rho(x) = \rho_c > 0, \quad  x \not\in \Omega,\\
 \label{in.kappa}
  &\kappa \in L^\infty(\Omega,\mathbb R),\qquad
  &&\kappa(x) = \kappa_c>0, \quad x \not \in \Omega
\end{align}
\end{subequations}
for some constants $\rho_c$ and $\kappa_c$. For equation \cref{in.ac} we consider the radiating Green function $P = P_{\rho,\kappa}(x,y,\omega)$, which is the solution of the following problem:
\begin{equation}\label{in.Pac}
 \begin{gathered}
  \nabla \cdot \big( \tfrac{1}{\rho} \nabla P(\cdot,y,\omega)\big) + \omega^2 \kappa P(\cdot,y,\omega) = -\delta_y, \quad \omega > 0, \\
  \bigl(\tfrac{\partial}{\partial |x|} - i \omega \sqrt{\rho_c \kappa_c} \bigr) P(x,y,\omega) = o\bigl( |x|^\frac{1-d}{2} \bigr), \quad |x|\to+\infty.
 \end{gathered} 
\end{equation}

In the present work we consider the following problem for equation \cref{in.ac}:
\begin{problem}\label{in.prob.ac} Determine the coefficients $\rho$, $\kappa$ in the acoustic equation \cref{in.ac} from $\Im P_{\rho,\kappa}(x,y,\omega)$ given at all $x$, $y \in\partial\Omega$, and for a finite number of $\omega$.
\end{problem}

\paragraph*{Notation} If  $X$ and $Y$ are Banach spaces, we will denote the space of bounded linear operators 
from $X$ to $Y$ by $L(X,Y)$ and write $L(X):=L(X,X)$. 
Moreover, we will denote the subspace of compact operators in $L(X,Y)$ by  $K(X,Y)$, and the subset 
of operators with a bounded inverse by $GL(X,Y)$. 

Besides, we denote by $\|\cdot\|_\infty$ the norm in $L^\infty(\Omega)$, and by $\langle\cdot,\cdot\rangle$, 
$\|\cdot\|_2$ the scalar product and the norm in $L^2(\partial\Omega)$. Furthermore, 
we use the standard notation $H^s(\partial\Omega)$ for $L^2$-based Sobolev spaces of index 
$s$ on $\partial \Omega$ (under the regularity assumption \cref{in.dom} we need $|s|\leq 3$).

In addition, the adjoint of an operator $A$ is denoted by $A^*$.

\subsection{Relations between $\Re {\mathcal G}$ and $\Im {\mathcal G}$}

For fixed $k>0$ let us introduce the integral operator
${\mathcal G}_v(k)\in L(L^2(\partial\Omega))$ by 
\begin{equation}\label{in.G0}
({\mathcal G}_v(k) \varphi)(x) := \int_{\partial\Omega} G^+_v(x,y,k) \varphi(y)\,ds(y),\qquad x\in\partial\Omega
\end{equation}
where $ds(y)$ is the hypersurface measure on $\partial \Omega$. For the basic properties of $\mathcal G_v(k)$ see, e.g., \cite[Chapter 7]{mclean:00}. Note that for the case $v=0$ 
the Green function $G^+_0$ is the outgoing fundamental solution to the Helmholtz equation, and 
${\mathcal G}_0$ is the corresponding single layer potential operator.

Recall that 
\begin{equation*}
  \sqrt{\sigma_\Omega(-\Delta)} := \bigl\{ k>0 \colon \text{$k^2$ is a Dirichlet eigenvalue of $-\Delta$ in $\Omega$} \bigr\}
\end{equation*}
is a discrete subset of $(0,+\infty)$ without accumulation points.

\begin{theorem}\label{re.thm.ABr} Suppose that  $\Omega$, $k$, and $v$ satisfy the conditions \cref{in.v}, 
\cref{in.dom}, \cref{in.Dir}. Then:
\begin{enumerate}
 \item The mapping 
\begin{equation}\label{re.Q}
 (0,+\infty) \setminus \sqrt{\sigma_\Omega(-\Delta)} \to L(H^1(\partial\Omega),L^2(\partial\Omega)), \quad \lambda \mapsto Q(\lambda) := \Im {\mathcal G}_0^{-1}(\lambda)
\end{equation}
has a unique continuous extension to $(0,+\infty)$. In following we will often write $Q$ instead of $Q(k)$.
 \item ${\mathcal G}_v(k)\in L(L^2(\partial\Omega),H^1(\partial\Omega))$ and the operators
\begin{equation}\label{re.ABQ}
  A := \Re {\mathcal G}_v(k), \quad B := \Im {\mathcal G}_v(k)
\end{equation}
satisfy the following relations:
\begin{subequations}\label{re.ABr}
\begin{align}
  & AQA + BQB = -B \label{re.ABr.1}\\
  & AQB - BQA = 0. \label{re.ABr.2}
\end{align}
\end{subequations}
\end{enumerate}

\end{theorem}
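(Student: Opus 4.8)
The plan is to derive the two relations \cref{re.ABr.1} and \cref{re.ABr.2} from a single identity relating $\mathcal G_v = \mathcal G_v(k)$ to $\mathcal G_0 = \mathcal G_0(k)$ together with the resolvent structure behind $\Im \mathcal G_0$. The starting point is Green's second identity / Lippmann--Schwinger type relation between the Green functions $G^+_v$ and $G^+_0$, which on $\partial\Omega$ should give an algebraic identity of the form $\mathcal G_v = \mathcal G_0 - \mathcal G_0 V \mathcal G_v$ (with $V$ the operator of multiplication by $v$, suitably interpreted through volume/trace operators), or equivalently a relation that on the boundary reads $\mathcal G_v^{-1} - \mathcal G_0^{-1} = (\text{something self-adjoint})$ modulo the boundary. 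Concretely, I expect that $\mathcal G_v^{-1} = \mathcal G_0^{-1} + R$ where $R$ is a \emph{real} (formally self-adjoint, real-kernel) operator — this is the key structural fact, reflecting that the potential $v$ is real, so all the ``imaginary part'' at energy $k^2$ comes from the free radiation condition and is already encoded in $\mathcal G_0$. Granting this, write $\mathcal G_v = A + iB$ with $A = A^*$, $B = B^*$ (self-adjointness of $A$, $B$ follows from reciprocity $G^+_v(x,y) = G^+_v(y,x)$ and the splitting into real and imaginary parts), and $\mathcal G_v^{-1} = \mathcal G_0^{-1} + R$ with $\Im(\mathcal G_v^{-1}) = \Im(\mathcal G_0^{-1}) = Q$, since $R$ is real.

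Next I would exploit $\mathcal G_v \mathcal G_v^{-1} = I$ on the relevant spaces. Taking imaginary parts: if $\mathcal G_v^{-1} = P + iQ$ with $P := \Re \mathcal G_v^{-1}$ real self-adjoint and $Q = \Im \mathcal G_v^{-1}$ as in item 1, then $(A+iB)(P+iQ) = I$ gives $AP - BQ = I$ and $AQ + BP = 0$. From the second, $BP = -AQ$, so (when $B$ is invertible, or by a density/approximation argument in general) $P = -B^{-1}AQ$, and substituting into the first: $AP - BQ = -AB^{-1}AQ - BQ = I$. This is not yet the stated form; the cleaner route is to instead multiply on the \emph{other} side or to use both $\mathcal G_v^{-1}\mathcal G_v = I$ and $\mathcal G_v \mathcal G_v^{-1} = I$ and eliminate $P$. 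Indeed from $AQ + BP = 0$ we get $B(AQ+BP)B = 0$, but more usefully: multiply $AP - BQ = I$ on the left by $B$ and use $BP = -AQ$ and self-adjointness to get $-BAB^{-1}AQ\cdots$ — to avoid $B^{-1}$ I would instead argue as follows. We also have, reading $\mathcal G_v^{-1}\mathcal G_v = I$, the relations $PA - QB = I$ and $QA + PB = 0$. Combining $QA + PB = 0$ with $AQ + BP = 0$ and with $P = P^*$, $Q = Q^*$, $A = A^*$, $B = B^*$: from $BP = -AQ$ take adjoints to get $PB = -QA$, consistent. Now compute $AQA + BQB$: using $QA = -PB$ gives $AQA = -APB$, and using $AP = I + BQ$ (from $AP - BQ = I$) gives $AQA = -(I+BQ)B = -B - BQB$, hence $AQA + BQB = -B$, which is \cref{re.ABr.1}. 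For \cref{re.ABr.2}: $AQB - BQA$; using $QB = -PA$ (from $PA + QB$? — need $QB + PA = 0$, which is the adjoint of $BQ + AP$? careful) — more directly, $AQB - BQA = A(QB) - B(QA) = A(-PA)\cdots$; I would instead note $AQB - BQA = (AQB)^* - (BQA)^* $ after transposing is $-(AQB - BQA)$ only if things are symmetric, showing it is skew; then pairing with $BP = -AQ$ gives $AQB = -\,$(something) that cancels $BQA$. The cleanest finish: from $BP = -AQ$ multiply on the right by $B$: $BPB = -AQB$; from $PB = -QA$ multiply on the left by $B$: $BPB = -BQA$; hence $AQB = BQA$, i.e.\ \cref{re.ABr.2}.

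For item 1 and the mapping properties in item 2, the work is to establish the smoothing $\mathcal G_v(k) \in L(L^2(\partial\Omega), H^1(\partial\Omega))$ (standard single-layer mapping properties plus a perturbation argument comparing $\mathcal G_v$ and $\mathcal G_0$, using \cref{in.Dir} so that $\mathcal G_v$ is a well-defined bounded operator), and to show that $\lambda \mapsto \Im \mathcal G_0^{-1}(\lambda)$, initially defined off the Dirichlet spectrum of $-\Delta$ where $\mathcal G_0(\lambda)$ is invertible, extends continuously across those points. The latter is the analytic crux: at $k \in \sqrt{\sigma_\Omega(-\Delta)}$ the operator $\mathcal G_0(k)$ has a kernel, so $\mathcal G_0^{-1}$ blows up, yet its \emph{imaginary part} should stay bounded because the singular directions are real (the Dirichlet eigenfunctions' normal traces are real), so the blow-up is confined to $\Re \mathcal G_0^{-1}$. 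I would prove this by a Jensen--Kato / Puiseux-type expansion of $\mathcal G_0(\lambda)^{-1}$ near such a point, or by writing $\mathcal G_0(\lambda) = \mathcal G_0^{\mathrm{int}}(\lambda) + i\,\mathcal G_0^{\mathrm{rad}}(\lambda)$-style decomposition and tracking which part is responsible for the degeneracy. This continuity statement — and the identification $Q = \Im \mathcal G_0^{-1} = \Im \mathcal G_v^{-1}$ — is the part I expect to be the main obstacle; the algebra in the previous paragraph, while needing care about whether $B$ is invertible (handled by continuity in $v$ or by a regularization $\mathcal G_v + i\epsilon$), is essentially bookkeeping. Once continuity is in hand, the identities \cref{re.ABr} extend from generic $k$ to all $k > 0$ by density.
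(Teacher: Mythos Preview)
Your core strategy is the same as the paper's: the key fact is that $\mathcal G_v^{-1}-\mathcal G_0^{-1}$ is a \emph{real} operator, hence $\Im\mathcal G_v^{-1}=\Im\mathcal G_0^{-1}=Q$, and the relations \cref{re.ABr} then follow by pure algebra from $\mathcal G_v\mathcal G_v^{-1}=I$. Your algebraic derivation (eliminating $P=\Re\mathcal G_v^{-1}$ via $BP=-AQ$, $PB=-QA$, $AP=I+BQ$) is correct; the paper does the equivalent computation more compactly by expanding $(A+iB)\,iQ\,(A-iB)=\mathcal G_v\bigl(\mathcal G_v^{-1}-\overline{\mathcal G_v}^{\,-1}\bigr)\overline{\mathcal G_v}\,/2=-iB$ and reading off real and imaginary parts, which avoids introducing $P$ at all.

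There are two places where the paper's argument is sharper than yours. First, for the ``key structural fact'' you grant but do not prove, the paper invokes the known identity $\mathcal G_v^{-1}-\mathcal G_0^{-1}=\Phi_v-\Phi_0$ (difference of Dirichlet-to-Neumann maps, cf.\ Nachman), which is manifestly real since $v$ is real-valued; your Lippmann--Schwinger suggestion is less direct because that relation lives on the volume, not on $\partial\Omega$. Second, and more significantly, for the continuous extension of $Q$ across $\sqrt{\sigma_\Omega(-\Delta)}$ the paper sidesteps your Puiseux/Jensen--Kato analysis entirely: once one knows $Q(k)=\Im\mathcal G_v^{-1}(k)$ for \emph{every} admissible $v$, one simply observes that for any $k>0$ there exists some $v$ with $k^2\notin\sigma_\Omega(-\Delta+v)$ (by monotonicity of Dirichlet eigenvalues), and uses the right-hand side of \eqref{re.GG0} for that $v$ to \emph{define} $Q(k)$. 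This turns what you flagged as the main obstacle into a one-line observation, at the cost of needing the DtN identity for a general $v$ rather than just $v=0$.
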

\Cref{re.thm.ABr} is proved in \cref{re.thm.ABrp}.

We would like to emphasize that relations \cref{re.ABr.1}, \cref{re.ABr.2} are valid in any dimension $d \geq 1$.

For the next theorem, recall that the exterior boundary value problem 
\begin{subequations}\label{eqs:extBVP}
\begin{align}
&\Delta u + k^2 u = 0&& \mbox{in }\mathbb{R}^d\setminus\overline{\Omega},\\
&u = u_0 && \mbox{on }\partial\Omega,\\ 
\label{eq:SRC}
&\tfrac{\partial u}{\partial |x|}-ik u = o\bigl( |x|^{(1-d)/2} \bigr)&&\mbox{as } |x|\to\infty
\end{align}
\end{subequations}
has a unique solution for all $u_0 \in C(\partial \Omega)$, which has the asymptotic behavior 
\[
u(x) = 
\frac{e^{ik|x|}}{|x|^{(d-1)/2}}u_{\infty}\left(\frac{x}{|x|}\right)
\left(1+O\left(\frac{1}{|x|}\right)\right),\quad |x|\to \infty.
\]
Here $u_\infty\in L^2(S^{d-1})$ is called the \emph{farfield pattern} of $u$. 

\begin{theorem}\label{re.thm.AtBtr}  Suppose that  $\Omega$, $k$, and $v$ satisfy the conditions \cref{in.v}, 
\cref{in.dom}, and \cref{in.Dir}. Then:
\begin{enumerate}
 \item[I.] The operator $C(\partial\Omega)\to L^2(S^{d-1})$, $u_0\mapsto \sqrt{k}u_{\infty}$ mapping 
Dirichlet boundary values $u_0$ to the scaled farfield pattern $u_\infty$ of the solution to 
\eqref{eqs:extBVP} has a continuous extension to an operator 
$T(k) \in L\bigl( L^2(\partial\Omega),L^2(S^{d-1}))$, and $T(k)$ is compact, injective, and has dense range. 
Moreover,  $Q(k)$ defined in \cref{re.thm.ABr} has a continuous extension to $L(L^2(\partial\Omega))$ satisfying
\begin{align}\label{eq:Q_factorization}
Q(k) = - T^*(k) T(k).
\end{align}
 \item[II.] The operators  $\widetilde A$, $\widetilde B \in L\bigl(L^2(S^{d-1})\bigr)$  defined by 
\begin{equation}\label{re.AtBt}
  \widetilde A := \Re \widetilde {\mathcal G}_v(k), \quad \widetilde B := \Im \widetilde {\mathcal G}_v(k), 
	\quad \widetilde{\mathcal G}_v(k) := T(k){\mathcal G}_v(k)T^*(k)
\end{equation}
are compact and symmetric and satisfy the relations 
\begin{subequations}\label{re.AtBtr}
\begin{align}
 & \widetilde A^2 = \widetilde B - \widetilde B^2, \label{re.AtBtr.1}\\
 & \widetilde A \widetilde B = \widetilde B \widetilde A. \label{re.AtBtr.2}
\end{align}
\end{subequations}

 \item[III.] The operators $\widetilde A$, $\widetilde B$ are simultaneously diagonalisable in $L^2(S^{d-1})$.  
Moreover, if $\widetilde{\mathcal G}_v(k)f = ( \lambda_{\widetilde A} +i\lambda_{\widetilde B})f$ for 
some $f\neq 0$ and $\lambda_{\widetilde A},\lambda_{\widetilde B}\in\mathbb{R}$, then 
\begin{equation}\label{re.lAlBr}
  \lambda_{\widetilde A}^2 = \lambda_{\widetilde B} -\lambda_{\widetilde B}^2.
\end{equation}

\end{enumerate}
\end{theorem}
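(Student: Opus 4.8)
I would organise the proof around the three parts I--III, the first being the technical heart; Parts~II and~III follow formally from it and from \cref{re.thm.ABr}. For Part~I I would realise $T(k)$ through a single-layer representation. Since the exterior Dirichlet problem \eqref{eqs:extBVP} is uniquely solvable for every $k>0$ (Rellich's lemma and unique continuation), the data-to-pattern map $u_0\mapsto\sqrt k\,u_\infty$ is well defined on $C(\partial\Omega)$. For $k^2\notin\sqrt{\sigma_\Omega(-\Delta)}$ I would write the exterior solution as a single-layer potential with density $\varphi=\mathcal G_0^{-1}(k)u_0$, so that $T(k)=H(k)\mathcal G_0^{-1}(k)$, where $H(k)$ maps a boundary density $\varphi$ to the farfield pattern, scaled by $\sqrt k$, of the single-layer potential with density $\varphi$; this $H(k)$ has a real-analytic kernel and is compact. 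The key algebraic ingredient is the Funk--Hecke/Herglotz formula, which identifies $\Im G^+_0(x,y,k)$ with a constant multiple of the spherical mean $\int_{S^{d-1}}e^{ik\hat x\cdot(x-y)}\,ds(\hat x)$ and, with precisely the $\sqrt k$ normalisation, gives $H^*(k)H(k)=\Im\mathcal G_0(k)$. Using reciprocity in the form $\mathcal G_0^*=\overline{\mathcal G_0}$ one then obtains $T^*T=\overline{\mathcal G_0}^{-1}(\Im\mathcal G_0)\mathcal G_0^{-1}$, whereas a direct computation gives $Q=\Im(\mathcal G_0^{-1})=\tfrac1{2i}(\mathcal G_0^{-1}-\overline{\mathcal G_0}^{-1})=-\mathcal G_0^{-1}(\Im\mathcal G_0)\overline{\mathcal G_0}^{-1}$; the two expressions agree once one observes the elementary operator identity $\mathcal G_0\overline{\mathcal G_0}^{-1}(\Im\mathcal G_0)=(\Im\mathcal G_0)\overline{\mathcal G_0}^{-1}\mathcal G_0$ (both sides equal $\tfrac1{2i}(\mathcal G_0\overline{\mathcal G_0}^{-1}\mathcal G_0-\mathcal G_0)$), so that $Q(k)=-T^*(k)T(k)$ for $k^2\notin\sqrt{\sigma_\Omega(-\Delta)}$. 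The right-hand side lies in $L(L^2(\partial\Omega))$, and a combined single/double-layer representation—whose boundary integral operator is invertible for all $k>0$—exhibits $T(k)$ as a continuous $L(L^2(\partial\Omega),L^2(S^{d-1}))$-valued function of $k$; hence this identity simultaneously furnishes the continuous $L(L^2)$-extension of $Q$ and the factorization \eqref{eq:Q_factorization} for every $k>0$. Finally, injectivity of $T$ follows from Rellich's lemma, and density of the range from injectivity of $T^*$ via properties of Herglotz wave functions and unique continuation (with a limiting argument at the Dirichlet eigenvalues of $-\Delta$ in $\Omega$).

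Parts~II and~III are then formal. By reciprocity $\mathcal G_v^*=\overline{\mathcal G_v}$, so $A=\Re\mathcal G_v=\tfrac12(\mathcal G_v+\mathcal G_v^*)$ and $B=\Im\mathcal G_v=\tfrac1{2i}(\mathcal G_v-\mathcal G_v^*)$ are self-adjoint, hence so are $\widetilde A=TAT^*$ and $\widetilde B=TBT^*$, and both are compact because $T$ is. Conjugating the relations of \cref{re.thm.ABr} by $T(\cdot)T^*$ and inserting $Q=-T^*T$ yields $T(AQA)T^*=-(TAT^*)^2=-\widetilde A^2$, and likewise $T(BQB)T^*=-\widetilde B^2$, $T(AQB)T^*=-\widetilde A\widetilde B$, $T(BQA)T^*=-\widetilde B\widetilde A$ and $TBT^*=\widetilde B$; thus \eqref{re.ABr.1} becomes \eqref{re.AtBtr.1} and \eqref{re.ABr.2} becomes \eqref{re.AtBtr.2}. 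For Part~III, \eqref{re.AtBtr.2} together with the compactness and self-adjointness of $\widetilde A$, $\widetilde B$ gives, by the spectral theorem for commuting compact self-adjoint operators, an orthonormal basis of $L^2(S^{d-1})$ consisting of common eigenvectors; moreover \eqref{re.AtBtr.2} makes $\widetilde{\mathcal G}_v$ normal, since $\widetilde{\mathcal G}_v^*\widetilde{\mathcal G}_v=\widetilde A^2+\widetilde B^2=\widetilde{\mathcal G}_v\widetilde{\mathcal G}_v^*$, so any $f\neq0$ with $\widetilde{\mathcal G}_v f=(\lambda_{\widetilde A}+i\lambda_{\widetilde B})f$ also satisfies $\widetilde{\mathcal G}_v^*f=(\lambda_{\widetilde A}-i\lambda_{\widetilde B})f$, whence $\widetilde A f=\lambda_{\widetilde A}f$, $\widetilde B f=\lambda_{\widetilde B}f$, and applying \eqref{re.AtBtr.1} to $f$ gives \eqref{re.lAlBr}.

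I expect Part~I to be the main obstacle, specifically proving $Q=-T^*(k)T(k)$ together with the continuous extension through $\sqrt{\sigma_\Omega(-\Delta)}$. This requires careful bookkeeping of the dimension-dependent constant in the outgoing fundamental solution so that the $\sqrt k$-scaling makes $H^*H$ exactly equal to $\Im\mathcal G_0$; a layer representation of the exterior solution that remains invertible for every $k>0$, in order to control $T(k)$ uniformly across the excluded frequencies; and a careful treatment of the behaviour at Dirichlet eigenvalues of $-\Delta$ in $\Omega$ (relevant both for the continuous extension and for the density-of-range claim), which rests on properties of Herglotz wave functions and unique continuation. Once Part~I and \cref{re.thm.ABr} are in place, Parts~II and~III follow by the computations above.
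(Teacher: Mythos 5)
Your Parts II and III coincide with the paper's argument (conjugate the relations of \cref{re.thm.ABr} by $T\cdot T^*$ and insert $Q=-T^*T$; common eigenbasis from commuting compact self-adjoint operators), and your normality observation even fills in the small step, left implicit in the paper, that an eigenvector of $\widetilde{\mathcal G}_v$ is automatically a joint eigenvector of $\widetilde A$ and $\widetilde B$. For Part I, however, you take a genuinely different route. The paper never works with $\mathcal G_0^{-1}$ at all: it chooses an auxiliary smooth potential $v$ supported in $\Omega$ so that $k^2$ avoids the Dirichlet spectrum of $-\Delta+v$ (this is how the extension through $\sqrt{\sigma_\Omega(-\Delta)}$ is achieved), writes $\Im\mathcal G_v = c_1(d,k)H_vH_v^*$ via the Stone-type formula \cref{re.stone} for distorted plane waves $\psi^+_v$, and uses the mixed reciprocity relation \cref{re.Cvfact} to identify $T\mathcal G_v$ with $\sqrt{k}\,\cthree(d,k)R\overline{H_v^*}$, from which $Q=-T^*T$ follows for every $k>0$ in one stroke; boundedness, compactness and injectivity of $T$ are simply quoted from Colton--Kress. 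You instead stay with the free problem: $T=H\mathcal G_0^{-1}$ off the Dirichlet spectrum of $-\Delta$, the Funk--Hecke identity $H^*H=\Im\mathcal G_0$ (your constant bookkeeping is consistent with \cref{eq:G0}), the elementary identity reconciling $\mathcal G_0^{-1}(\Im\mathcal G_0)\overline{\mathcal G_0}^{-1}$ with $\overline{\mathcal G_0}^{-1}(\Im\mathcal G_0)\mathcal G_0^{-1}$, and then continuity in $k$ of $T(k)$ via a combined single/double-layer representation to cross the exceptional frequencies. This is valid and arguably more elementary; what it does not deliver is the scattering interpretation of $\widetilde{\mathcal G}_v^*$ (the paper's Lemma \ref{ge.lem.sct}, a by-product of \cref{re.lem.C}) which is reused later to verify \cref{un.Geig}, but that is outside the present theorem.

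One point needs repair: the dense-range claim for $T(k)$ at frequencies where $k^2$ \emph{is} a Dirichlet eigenvalue of $-\Delta$ in $\Omega$ (which assumption \cref{in.Dir} does not exclude, since it concerns $-\Delta+v$). Your argument for injectivity of $T^*$ goes through the factorization $T^*=(\mathcal G_0^{-1})^*H^*$ and the fact that a Herglotz wave function vanishing on $\partial\Omega$ must vanish identically, which is exactly what fails at an interior Dirichlet eigenvalue; and a ``limiting argument'' cannot rescue it, because dense range is not preserved under operator-norm limits. The standard fix (and in effect what the cited Colton--Kress theorem does) is a duality argument that never sees the interior problem: if $T^*g=0$, Green's representation of the far field shows that the entire Herglotz-type solution $\overline{v_g}$ has the same Cauchy data on $\partial\Omega$ as the radiating solution of the exterior Dirichlet problem with boundary values $\overline{v_g}|_{\partial\Omega}$, hence is itself radiating, hence vanishes by Rellich's lemma, giving $g=0$ for every $k>0$. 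With that replacement (or with a direct citation, as in the paper), your proof is complete.
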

\Cref{re.thm.AtBtr} is proved in \cref{re.thm.AtBtrp}.

We could replace $T(k)$ by any operator satisfying \eqref{eq:Q_factorization} in most of this paper, 
e.g. $\sqrt{-Q(k)}$. However,  $\widetilde{\mathcal G}^*_v(k)$ has a physical interpretation given in 
Lemma \ref{ge.lem.sct}, and this will be used to verify condition \eqref{un.Geig} below.

In analogy to the relations \cref{re.ABr.1} and \cref{re.ABr.2}, the relations \cref{re.AtBtr.1} and 
\cref{re.AtBtr.2} are also valid in any dimension $d \geq 1$.

\begin{remark}\label{re.rem.KK} The algebraic relations between $\Re G^+_v$ and $\Im G^+_v$ given in \cref{re.thm.ABr} and \cref{re.thm.AtBtr} involve only one frequency in contrast to well-known Kramers-Kronig relations which under certain conditions are as follows:
\begin{align*}
    \Re G^+_v(x,y,k) & = \frac 1 \pi \mathop{\mathrm{p.v.}}\int_{-\infty}^{+\infty} \frac{\Im G^+_v(x,y,k')}{k'-k} dk', \\
    \Im G^+_v(x,y,k) &= -\frac 1 \pi \mathop{\mathrm{p.v.}} \int_{-\infty}^{+\infty} \frac{\Re G^+_v(x,y,k')}{k'-k} dk',
\end{align*}
where $x \neq y$, $k\in \mathbb R$ for $d=3$ or $k \in \mathbb R\setminus\{0\}$ for $d=2$, and $G^+_v(x,y,-k) := \overline G^+_v(x,y,k)$, $G^+_0(x,y,-k) := \overline G^+_0(x,y,k)$, $k > 0$.

In this simplest form the Kramers-Kronig relations are valid, for example, for the Schr\"odinger equation \cref{in.eq} under conditions \cref{in.v}, \cref{in.dom}, $d = 2$, $3$, if the discrete spectrum of $-\Delta + v$ in $L^2(\mathbb R^d)$ is empty and $0$ is not a resonance (that is, a pole of the meromorphic continuation of the resolvent $k\mapsto(-\Delta+v-k^2)^{-1}$).
\end{remark}

\subsection{Identifiability of $v$ from $\Im {\mathcal G}_v$}

We suppose that
\begin{equation}\label{un.Geig}
	\begin{gathered}
	  \text{if $\lambda_1$, $\lambda_2$ are eigenvalues of $\widetilde{\mathcal G}_v(k)$ with $\Im \lambda_1 = \Im \lambda_2$, then $\Re \lambda_1 = \Re \lambda_2$},
	\end{gathered}
\end{equation}
where $\widetilde{\mathcal G}_v(k) = \widetilde A + i \widetilde B$ is the operator defined in \cref{re.thm.AtBtr}. Under this assumption any eigenbasis of $\widetilde B$ in $L^2(S^{d-1})$ is also an eigenbasis for $\widetilde A$ in $L^2(S^{d-1})$ in view of \cref{re.thm.AtBtr} (III).

\begin{theorem}\label{un.thm.sch} Let $\Omega$ satisfy \cref{in.dom}, $d \geq 2$, $v_0$ satisfy \cref{in.v} and let $k>0$ be such that $\Re{\mathcal G}_{v_0}(k)$ is injective in $H^{-\frac 1 2}(\partial \Omega)$ and \cref{un.Geig} holds true with $v=v_0$. Then there exists 
$\delta = \delta(\Omega,k,v_0) > 0$ such that for any $v_1$, $v_2$ satisfying \cref{in.v} and 
\[
  \|v_1-v_0\|_\infty \leq \delta, \quad \|v_2-v_0\|_\infty \leq \delta,
\]
the equality $\Im G^+_{v_1}(x,y,k) = \Im G^+_{v_2}(x,y,k)$ for all $x, y \in \partial\Omega$ implies that $v_1 = v_2$.
\end{theorem}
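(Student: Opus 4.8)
\emph{Plan of proof.} I would follow the scheme of \cref{in.fig.sch}: using the algebraic relations of \cref{re.thm.ABr,re.thm.AtBtr} the statement is reduced to the already known uniqueness for \cref{in.prob.full}. Concretely, it suffices to establish
\[
  \Re {\mathcal G}_{v_1}(k) = \Re {\mathcal G}_{v_2}(k).
\]
Indeed, the hypothesis $\Im G^+_{v_1} = \Im G^+_{v_2}$ on $\partial\Omega\times\partial\Omega$ says precisely that $\Im{\mathcal G}_{v_1}(k) = \Im{\mathcal G}_{v_2}(k)$; combined with the displayed identity this gives ${\mathcal G}_{v_1}(k) = {\mathcal G}_{v_2}(k)$, hence $G^+_{v_1}(x,y,k) = G^+_{v_2}(x,y,k)$ for all $x\neq y$ on $\partial\Omega$ (the integral kernels agree a.e.\ and are continuous off the diagonal). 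Solving the exterior Dirichlet problem \cref{eqs:extBVP} with these data then propagates the equality of $G^+_{v_1}$ and $G^+_{v_2}$, and of the associated farfield/scattering data, to all of $\mathbb R^d\setminus\overline\Omega$, so that $v_1 = v_2$ by the uniqueness results of \cite{novikov:88,berezanskii:58} for \cref{in.prob.full} (this is where $d\ge 2$ enters).

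\emph{Reduction to the farfield level.} Write $A_j := \Re{\mathcal G}_{v_j}(k)$, $B_j := \Im{\mathcal G}_{v_j}(k)$ for $j\in\{0,1,2\}$, and, with $T(k)$ as in \cref{re.thm.AtBtr}(I), $\widetilde A_j := \Re\widetilde{\mathcal G}_{v_j}(k) = T(k)A_jT^*(k)$, $\widetilde B_j := \Im\widetilde{\mathcal G}_{v_j}(k) = T(k)B_jT^*(k)$ (using reciprocity, so that $\mathcal G_{v_j}^*=\Re\mathcal G_{v_j}-i\Im\mathcal G_{v_j}$). From $B_1 = B_2$ one gets $\widetilde B_1 = \widetilde B_2 =: \widetilde B$, and by \cref{re.AtBtr.1}
\[
  \widetilde A_1^2 = \widetilde B - \widetilde B^2 = \widetilde A_2^2,
\]
so, since $\widetilde A_1,\widetilde A_2$ are compact and symmetric, functional calculus gives $|\widetilde A_1| = |\widetilde A_2| = (\widetilde B - \widetilde B^2)^{1/2}$. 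Because $T(k)$ is injective with dense range, so is $T^*(k)$, and hence the map $M\mapsto T(k)MT^*(k)$ is injective on bounded operators on $L^2(\partial\Omega)$; therefore $A_1 = A_2$ will follow once $\widetilde A_1 = \widetilde A_2$ is shown. Thus the whole problem reduces to passing from $|\widetilde A_j|$ to $\widetilde A_j$, i.e.\ to fixing the signs of the eigenvalues of $\widetilde A_j$ — the step carried out in \cref{un.prp.sch}.

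\emph{Fixing the signs.} By \cref{re.thm.AtBtr}(III) each $\widetilde A_j$ commutes with $\widetilde B$ and is simultaneously diagonalisable with it; on the eigenspace $E_\lambda$ of $\widetilde B$ with eigenvalue $\lambda\in(0,1)$ the operator $\widetilde A_j$ acts as $\sqrt{\lambda-\lambda^2}\,\Sigma_j^{(\lambda)}$ with $\Sigma_j^{(\lambda)}$ a self-adjoint involution of $E_\lambda$, and it acts as $0$ on $E_0$ and $E_1$. I would show $\Sigma_1^{(\lambda)} = \Sigma_2^{(\lambda)}$ for every $\lambda$ using three ingredients: the inertia theorem of \cref{sec:inertia} applied to the relations \cref{re.AtBtr}, the assumption \cref{un.Geig} for $v=v_0$, and a perturbation argument around $v_0$. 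The inertia theorem should show that $\widetilde A_v$ has only finitely many negative eigenvalues and that their number is determined by $\widetilde B_v$ together with finitely many discrete invariants which, thanks to the injectivity of $\Re{\mathcal G}_{v_0}(k)$ in $H^{-1/2}(\partial\Omega)$, stay constant for $\|v-v_0\|_\infty$ small (this injectivity also excludes a would-be sign ambiguity at $\lambda=1$). Consequently the negative spectral subspace of $\widetilde A_{v_j}$ is finite-dimensional, is contained in the $\widetilde B$-eigenspaces with $\lambda$ in a fixed compact subinterval of $(0,1)$ (the corresponding eigenvalues $-\sqrt{\lambda-\lambda^2}$ being bounded away from $0$), and is $O(\delta)$-close to that of $\widetilde A_0$; since \cref{un.Geig} makes $\widetilde A_0$ a constant $\pm\sqrt{\lambda_0-\lambda_0^2}$ on each $\widetilde B_0$-eigenspace, for $\delta=\delta(\Omega,k,v_0)$ small enough each $\Sigma_j^{(\lambda)}$ is forced to equal the corresponding sign of $\widetilde A_0$, independently of $j$. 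Hence $\widetilde A_1 = \widetilde A_2$, and the proof closes as in the first paragraph.

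\emph{Main obstacle.} I expect the hard part to be the sign-fixing, and within it two points: (i) proving the inertia theorem that ties the signature of $\widetilde A_v$ to data accessible from $\Im{\mathcal G}_v$ plus invariants locally constant near $v_0$; and (ii) controlling the accumulation of the eigenvalues of $\widetilde B$ at $0$ uniformly in $v$ near $v_0$, so that only finitely many signs are genuinely at stake and the perturbation argument closes. Everything else — the two reductions above and the appeal to \cite{novikov:88,berezanskii:58} — is comparatively routine once \cref{re.thm.ABr,re.thm.AtBtr} are available.
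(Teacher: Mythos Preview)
Your overall plan matches the paper's proof exactly: reduce to $\widetilde A_1=\widetilde A_2$ via \cref{re.AtBtr}, fix the signs by comparison with $\widetilde A_0$, then lift back through $T$ and invoke \cite{novikov:88,berezanskii:58}. The one genuine gap is in the sign-fixing. Knowing that the negative spectral subspace of $\widetilde A_{v_j}$ is finite-dimensional and $O(\delta)$-close to $L^{v_0}_-$ does \emph{not} by itself force $\Sigma_1^{(\lambda)}=\Sigma_2^{(\lambda)}$: on a multi-dimensional $\widetilde B$-eigenspace $E_\lambda$ two distinct self-adjoint involutions can have $(-1)$-eigenspaces of the same dimension, both close to a fixed reference, and still differ. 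The missing ingredient is that \cref{un.Geig} must first be \emph{transferred} from $v_0$ to each $v_j$; the paper does this as part of \cref{un.prp.sch}, using that $\widetilde A_{v_j}$ has only finitely many negative eigenvalues (\cref{un.prp.hel}) together with upper semicontinuity of finitely many eigenvalues. Once \cref{un.Geig} holds for $v_j$, each $\Sigma_j^{(\lambda)}$ is $\pm I$ on the whole of $E_\lambda$, so any eigenbasis of $\widetilde B$ is a common eigenbasis of $\widetilde A_1$ and $\widetilde A_2$, and the inertia lemma then supplies the $j$-independent criterion ``$\lambda<0\iff \mathrm d(f,L^{v_0}_-)<\tfrac12$'' for each common eigenvector $f$, which fixes every sign.

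Two minor points: your phrase ``their number is determined by $\widetilde B_v$ together with finitely many discrete invariants'' is misleading---the negative index is \emph{not} readable from $\widetilde B_v$, it is pinned down only by the perturbation argument equating it to $\mathrm{rk}\,P^{\widetilde A_0}_-$; and the parenthetical about injectivity excluding a sign ambiguity at $\lambda=1$ is moot, since $\widetilde A_j$ already vanishes on $E_0$ and $E_1$. The injectivity of $\Re\mathcal G_{v_0}(k)$ in $H^{-1/2}(\partial\Omega)$ is used instead (via \cref{in.lem.AB}) to guarantee that $A_0$ is a genuine isomorphism $H^{-1/2}\to H^{1/2}$, which is what makes the inertia perturbation argument go through and also yields \cref{in.Dir} for $v_j$ at the end.
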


\Cref{un.thm.sch} is proved in \cref{un.thm.schp}. 
In \cref{un.lem.genp} we present results indicating that the assumptions of this theorem are ``generically'' satisfied.

We also mention the following simpler uniqueness result for $\Re G^+_v$ based on analytic continuation if 
$\Im G^+_v$ is given not only for one frequency, but for an interval of frequencies. This uniqueness result 
is even global. However, analytic continuation is notoriously unstable, and computing 
$\Im G^+_v$ on an interval of frequencies from time dependent data would require an infinite time window. 
Therefore, it is preferable to work with a discrete set of frequencies.

\begin{proposition}\label{un.thm.int} Let $\Omega$ satisfy \cref{in.dom}, $d \in \{2,3\}$, and $v_1$, $v_2$ satisfy \cref{in.v}. Suppose that the discrete spectrum of the operators $-\Delta + v_j$ in $L^2(\mathbb R^d)$ is empty and $0$ is not a resonance (that is, a pole of the meromorphic continuation of the resolvent $R^+_{v_j}(k)=(-\Delta+v_j-k^2-i0)^{-1}$), $j = 1$, $2$. Besides, let $x$, $y \in \mathbb R^d$, $x \neq y$, be fixed. Then if $\Im G^+_{v_1}(x,y,k) = \Im G^+_{v_2}(x,y,k)$ for all $k \in (k_0-\varepsilon,k_0+\varepsilon)$ for some fixed $k_0 > 0$, $\varepsilon > 0$, then $G^+_{v_1}(x,y,k) = G^+_{v_2}(x,y,k)$ for all $k>0$. In addition, if $\Im G^+_{v_1}(x,y,k) = \Im G^+_{v_2}(x,y,k)$ for all $x$, $y \in \partial\Omega$, $k \in (k_0-\varepsilon,k_0+\varepsilon)$, then $v_1 = v_2$.
\end{proposition}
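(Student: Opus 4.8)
The plan is to combine analytic continuation in the frequency variable $k$ with the Kramers--Kronig relations recorded in \cref{re.rem.KK}, and then to reduce to the already known uniqueness results for the full Green function (\cref{in.prob.full}).

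\textbf{Analyticity in $k$ and the first reduction.} Fix $x\neq y$. For a compactly supported real potential $v_j$ as in \cref{in.v} it is classical that the outgoing resolvent $R^+_{v_j}(k)=(-\Delta+v_j-k^2-i0)^{-1}$, and hence its Schwartz kernel $G^+_{v_j}(x,y,k)$ for $x\neq y$, is the boundary value from $\{\Im k>0\}$ of a holomorphic function which continues holomorphically across $(0,+\infty)$ with no poles there (compactly supported real potentials have no positive eigenvalues and no positive resonances); under the additional hypotheses that the discrete spectrum is empty and that $0$ is not a resonance the continuation is in fact holomorphic in a complex neighbourhood of $[0,+\infty)$. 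In particular $k\mapsto\Im G^+_{v_j}(x,y,k)$ is real--analytic on $(0,+\infty)$. Since by assumption $\Im G^+_{v_1}(x,y,\cdot)=\Im G^+_{v_2}(x,y,\cdot)$ on the interval $(k_0-\varepsilon,k_0+\varepsilon)$ and $(0,+\infty)$ is connected, the identity theorem for real--analytic functions gives
\begin{equation*}
  \Im G^+_{v_1}(x,y,k)=\Im G^+_{v_2}(x,y,k)\qquad\text{for all }k>0.
\end{equation*}

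\textbf{Recovering the real part.} With the convention $G^+_{v_j}(x,y,-k):=\overline{G^+_{v_j}(x,y,k)}$ the function $k\mapsto\Im G^+_{v_j}(x,y,k)$ is odd on $\mathbb R$, so the previous step yields equality of $\Im G^+_{v_1}(x,y,\cdot)$ and $\Im G^+_{v_2}(x,y,\cdot)$ on all of $\mathbb R$ (on $\mathbb R\setminus\{0\}$ when $d=2$). Substituting into the Kramers--Kronig relation of \cref{re.rem.KK}, which is valid precisely under the present hypotheses, one obtains $\Re G^+_{v_1}(x,y,k)=\Re G^+_{v_2}(x,y,k)$ for all $k$ (all $k\neq0$ if $d=2$), hence $G^+_{v_1}(x,y,k)=G^+_{v_2}(x,y,k)$ for all $k>0$, which is the first assertion. (For $d=3$ one may argue instead that $F:=G^+_{v_1}(x,y,\cdot)-G^+_{v_2}(x,y,\cdot)$ extends to a bounded holomorphic function on $\{\Im k>0\}$ with real boundary values on $\mathbb R$, hence by Schwarz reflection to a bounded entire function tending to $0$ at infinity, so $F\equiv0$.)

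\textbf{From boundary data of $G^+$ to $v$.} Suppose now the hypothesis holds for all $x,y\in\partial\Omega$. By the first assertion, $G^+_{v_1}(x,y,k)=G^+_{v_2}(x,y,k)$ for all $x,y\in\partial\Omega$ and all $k>0$. Choose a single $k>0$ whose square is not a Dirichlet eigenvalue of $-\Delta+v_1$ or of $-\Delta+v_2$ in $\Omega$ (possible since only a discrete set of $k$ is excluded); then the full Green-function data of \cref{in.prob.full} coincide for $v_1$ and $v_2$, and the known fixed-energy uniqueness results --- \cite{novikov:88} for $d=3$ and \cite{bukhgeim:08} for $d=2$ --- give $v_1=v_2$.

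The only genuinely delicate point is the first step: one must know that, for compactly supported real $v_j$, the outgoing Green function continues holomorphically past $(0,+\infty)$ without poles, and that the absence of point spectrum together with the absence of a zero-energy resonance makes this continuation regular near the origin. This is exactly the input that underlies the validity of the Kramers--Kronig relations quoted in \cref{re.rem.KK}; granting it, the remaining steps are bookkeeping (oddness of $\Im G^+$, a Hilbert transform, a discrete set of excluded frequencies) together with the citation of the deep fixed-energy uniqueness theorems.
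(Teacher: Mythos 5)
Your proposal is correct and follows essentially the same route as the paper's own proof: real-analyticity of $\Im G^+_{v_j}(x,y,\cdot)$ in $k$ via analytic continuation of the resolvent, extension of the equality from the interval to all $k$ using the identity theorem and oddness, recovery of $\Re G^+$ via the Kramers--Kronig relations of \cref{re.rem.KK}, and then the known boundary-data uniqueness results \cite{novikov:88,berezanskii:58} for the full Green function. Your added remarks (the Schwarz-reflection alternative for $d=3$ and the explicit choice of $k$ avoiding Dirichlet eigenvalues) are harmless elaborations of the same argument.
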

\begin{proof} Under the assumptions of \cref{un.thm.int} the functions $G^+_{v_j}(x,y,k)$ at fixed $x \neq y$ admit analytic continuation to a neighborhood of each $k \in \mathbb R$ ($k \neq 0$ for $d=2$) in $\mathbb C$. It follows that $\Im G^+_{v_j}(x,y,k)$ are real-analytic functions of $k \in \mathbb R$ ($k \neq 0$ for $d=2$). 
Moreover, $\Im G^+_{v_j}(x,y,-k) = -\Im G^+_{v_j}(x,y,k)$ for all $k > 0$. Hence, the equality $\Im G^+_{v_1}(x,y,k) = \Im G^+_{v_2}(x,y,k)$ for $k\in(k_0-\varepsilon,k_0+\varepsilon)$ implies the same equality for all $k \in \mathbb R$ ($k \neq 0$ for $d=2$). Taking into account Kramers-Kronig relations recalled in \cref{re.rem.KK}, we obtain, in particular, that $G^+_{v_1}(x,y,k) = G^+_{v_2}(x,y,k)$, $k > 0$.

Moreover, the equality $G^+_{v_1}(x,y,k)=G^+_{v_2}(x,y,k)$, $x$, $y \in \partial\Omega$, $k>0$, implies $v_1 = v_2$ see, e.g., \cite{berezanskii:58,novikov:88}.
\end{proof}

\subsection{Identifiability of $\rho$ and $\kappa$ from $\Im P_{\rho,\kappa}$}

Let $P_{\rho,\kappa}(x,y,\omega)$ be the function of \cref{in.Pac} and define $\mathcal P_{\rho,\kappa,\omega}$, $\widetilde{\mathcal P}_{\rho,\kappa,\omega}$ as
\begin{gather*}
  \bigl(\mathcal P_{\rho,\kappa,\omega} u \bigr)(x) := \int_{\partial \Omega} P_{\rho,\kappa}(x,y,\omega)u(y) \, ds(y), \quad x \in \partial \Omega, \; u \in H^{-\frac 1 2}(\partial \Omega), \\
  \widetilde{\mathcal P}_{\rho,\kappa,\omega} := T(k)\mathcal P_{\rho,\kappa,\omega}T^*(k), \quad k := \omega\sqrt{\rho_c\kappa_c},
\end{gather*}
where $T(k)$ is the same as in \cref{re.thm.AtBtr}. We suppose that
\begin{equation}\label{ac.Peig}
	\text{if $\lambda_1$, $\lambda_2$ are eigenvalues of $\widetilde{\mathcal P}_{\rho,\kappa,\omega}$ with $\Im \lambda_1 = \Im \lambda_2$, then $\Re \lambda_1 = \Re \lambda_2$}.
\end{equation}
Let $W^{2,\infty}(\Omega)$ denote the $L^\infty$-based Sobolev space of index $2$.

The following theorems are local uniqueness results for the acoustic equation \cref{in.ac}.

\begin{theorem}\label{ac.thm.two} Let $\Omega$ satisfy \cref{in.dom}, $d \geq 2$, and suppose that $\rho_0$, $\kappa_0$ satisfy \cref{in.rho}, \cref{in.kappa} for some known $\rho_c$, $\kappa_c$. Let $\omega_1$, $\omega_2$ be such $\Re \mathcal P_{\rho_0,\kappa_0,\omega_j}$ is injective in $H^{-\frac 1 2}(\partial \Omega)$ and \cref{ac.Peig} holds true with $\rho=\rho_0$, $\kappa=\kappa_0$, $\omega=\omega_j$, $j = 1$, $2$. Besides, let $\rho_1$, $\kappa_1$ and $\rho_2$, $\kappa_2$ be two pairs of functions satisfying \cref{in.rho}, \cref{in.kappa}. Then there exist constants $\delta_{1,2} = \delta_{1,2}(\Omega,\omega_1,\omega_2,\kappa_0,\rho_0)$ such that if
 \begin{align*}
&\|\rho_1-\rho_0\|_{W^{2,\infty}} \leq \delta_1, &&\|\rho_2-\rho_0\|_{W^{2,\infty}} \leq \delta_1,\\
&\|\kappa_1-\kappa_0\|_\infty \leq \delta_2, &&\|\kappa_2-\kappa_0\|_\infty \leq \delta_2,
 \end{align*}
then the equality $\Im P_{\rho_1,\kappa_1}(x,y,\omega_j) = \Im P_{\rho_2,\kappa_2}(x,y,\omega_j)$ for all $x, y \in \partial \Omega$ and 
$j\in\{1,2\}$ implies that $\rho_1 = \rho_2$ and $\kappa_1 = \kappa_2$.
\end{theorem}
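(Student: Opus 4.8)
The plan is to reduce \cref{ac.thm.two} to the Schr\"odinger uniqueness result \cref{un.thm.sch} via the Liouville transformation, apply that result at the two frequencies, and then disentangle $\rho$ and $\kappa$ from the two resulting potentials. For the reduction, given $(\rho,\kappa)$ satisfying \cref{in.rho}, \cref{in.kappa} and $\omega>0$, put $k:=\omega\sqrt{\rho_c\kappa_c}$ and
\begin{equation*}
 v_{\rho,\kappa,\omega} := \rho^{1/2}\Delta\bigl(\rho^{-1/2}\bigr) + \omega^2\bigl(\rho_c\kappa_c-\rho\kappa\bigr)\ \text{ on }\Omega,\qquad v_{\rho,\kappa,\omega}:=0\ \text{ on }\mathbb R^d\setminus\Omega.
\end{equation*}
Since $\rho\in W^{2,\infty}$ is bounded below on $\overline\Omega$ and $\kappa\in L^\infty$, this $v_{\rho,\kappa,\omega}$ lies in $L^\infty(\Omega,\mathbb R)$, and it vanishes outside $\Omega$ because there $\rho\equiv\rho_c$ and $\rho\kappa\equiv\rho_c\kappa_c$; hence it satisfies \cref{in.v}. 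A direct computation based on $\nabla\cdot(\rho^{-1}\nabla\rho^{1/2})=-\Delta\rho^{-1/2}$ shows that $P_{\rho,\kappa}(x,y,\omega)=\rho(x)^{1/2}\rho(y)^{1/2}\,G^+_{v_{\rho,\kappa,\omega}}(x,y,k)$ for all $x,y$, the radiation conditions in \cref{in.Pac} and \cref{in.G+} being compatible because $\rho\equiv\rho_c$ near infinity. In particular, since $\rho\equiv\rho_c$ on $\partial\Omega$, one gets $\mathcal P_{\rho,\kappa,\omega}=\rho_c\,\mathcal G_{v_{\rho,\kappa,\omega}}(k)$ and $\widetilde{\mathcal P}_{\rho,\kappa,\omega}=\rho_c\,\widetilde{\mathcal G}_{v_{\rho,\kappa,\omega}}(k)$ as operators on $\partial\Omega$, resp.\ $S^{d-1}$.

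Next I would apply \cref{un.thm.sch}. Set $k_j:=\omega_j\sqrt{\rho_c\kappa_c}$ and $v_{m,j}:=v_{\rho_m,\kappa_m,\omega_j}$ for $m\in\{0,1,2\}$, $j\in\{1,2\}$. By the previous paragraph, and since scaling by $\rho_c>0$ affects neither injectivity nor the eigenvalue relation in \cref{ac.Peig}, the hypotheses on $\mathcal P_{\rho_0,\kappa_0,\omega_j}$ translate into: $\Re\mathcal G_{v_{0,j}}(k_j)$ is injective in $H^{-1/2}(\partial\Omega)$ — which forces $k_j^2$ not to be a Dirichlet eigenvalue of $-\Delta+v_{0,j}$ in $\Omega$, so \cref{in.Dir} holds for $(v_{0,j},k_j)$ — and \cref{un.Geig} holds for $v=v_{0,j}$ at $k_j$. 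Thus \cref{un.thm.sch} applies and furnishes $\delta(\Omega,k_j,v_{0,j})>0$. Writing $\rho^{1/2}\Delta\rho^{-1/2}=-\tfrac12\rho^{-1}\Delta\rho+\tfrac34\rho^{-2}|\nabla\rho|^2$, the maps $\rho\mapsto\rho^{1/2}\Delta\rho^{-1/2}$ and $(\rho,\kappa)\mapsto\rho\kappa$ are locally Lipschitz into $L^\infty(\Omega)$ near $(\rho_0,\kappa_0)$ (they involve only products of $L^\infty$ functions and $\rho\mapsto\rho^{-1}$ on a set where $\rho$ is bounded below), so $\|v_{m,j}-v_{0,j}\|_\infty\le C\bigl(\|\rho_m-\rho_0\|_{W^{2,\infty}}+\|\kappa_m-\kappa_0\|_\infty\bigr)$ with $C=C(\Omega,\omega_1,\omega_2,\rho_0,\kappa_0)$. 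I would then choose $\delta_1,\delta_2>0$ so small that $C(\delta_1+\delta_2)\le\min_j\delta(\Omega,k_j,v_{0,j})$ and $\rho_1,\rho_2$ stay bounded below (so each $v_{m,j}$ satisfies \cref{in.v}). Dividing the data equality by $\rho_c$ gives $\Im G^+_{v_{1,j}}(x,y,k_j)=\Im G^+_{v_{2,j}}(x,y,k_j)$ on $\partial\Omega$, whence \cref{un.thm.sch} yields $v_{1,j}=v_{2,j}$ for $j=1,2$.

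Finally I would disentangle the coefficients. The equalities $v_{1,j}=v_{2,j}$ read $\rho_1^{1/2}\Delta\rho_1^{-1/2}-\rho_2^{1/2}\Delta\rho_2^{-1/2}=\omega_j^2(\rho_1\kappa_1-\rho_2\kappa_2)$ on $\Omega$, $j=1,2$; since the left side is independent of $j$ and $\omega_1\neq\omega_2$, subtraction gives $\rho_1\kappa_1=\rho_2\kappa_2$ and then $\rho_1^{1/2}\Delta\rho_1^{-1/2}=\rho_2^{1/2}\Delta\rho_2^{-1/2}=:q$ on $\Omega$. The functions $u_m:=\rho_m^{-1/2}\in W^{2,\infty}$ are positive, equal $\rho_c^{-1/2}$ on $\partial\Omega$, and satisfy $\Delta u_m=q\,u_m$ in $\Omega$, so $w:=u_1-u_2$ obeys $(-\Delta+q)w=0$ in $\Omega$ with $w|_{\partial\Omega}=0$; it therefore suffices to know that $0$ is not a Dirichlet eigenvalue of $-\Delta+q$ in $\Omega$, for then $w=0$, so $\rho_1=\rho_2$, and $\kappa_1=\rho_1^{-1}(\rho_1\kappa_1)=\rho_1^{-1}(\rho_2\kappa_2)=\kappa_2$. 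To obtain the spectral condition I would use that $u_0:=\rho_0^{-1/2}$ is a positive solution of $(-\Delta+q_0)u_0=0$ in $\Omega$ with $q_0:=\rho_0^{1/2}\Delta\rho_0^{-1/2}$: the ground-state substitution $\phi=u_0\psi$ gives $\int_\Omega(|\nabla\phi|^2+q_0\phi^2)=\int_\Omega u_0^2|\nabla\psi|^2$ for all $\phi\in H^1_0(\Omega)$, whence (using the Poincar\'e inequality for $\psi=\phi/u_0\in H^1_0(\Omega)$) the principal Dirichlet eigenvalue $\lambda_1(-\Delta+q_0;\Omega)$ is strictly positive; since $\|q-q_0\|_\infty\le C'\|\rho_1-\rho_0\|_{W^{2,\infty}}\le C'\delta_1$ and $\lambda_1(-\Delta+q;\Omega)\ge\lambda_1(-\Delta+q_0;\Omega)-\|q-q_0\|_\infty$, shrinking $\delta_1$ once more gives $\lambda_1(-\Delta+q;\Omega)>0$, as needed.

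The reduction and the perturbation bookkeeping are essentially routine; the one genuinely new ingredient beyond \cref{un.thm.sch} is the last step — recovering the density from $\rho^{1/2}\Delta\rho^{-1/2}$ together with its boundary value — whose crux is excluding $0$ from the Dirichlet spectrum of $-\Delta+q$, which I would handle via the explicit positive solution $\rho_0^{-1/2}$ and continuity of the principal eigenvalue in $\|q\|_\infty$. I also note that the explicit $\omega$-dependence of $v_{\rho,\kappa,\omega}$ is exactly what makes two frequencies sufficient: a single frequency would determine only the combination $\rho^{1/2}\Delta\rho^{-1/2}+\omega^2(\rho_c\kappa_c-\rho\kappa)$.
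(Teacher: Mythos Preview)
Your proof is correct and follows essentially the same route as the paper: reduce to the Schr\"odinger problem via the Liouville transform $P_{\rho,\kappa}=\rho^{1/2}(x)\rho^{1/2}(y)G^+_v$, apply \cref{un.thm.sch} at each frequency, and use the two resulting equalities of potentials to separate $\rho^{1/2}\Delta\rho^{-1/2}$ from $\rho\kappa$. Your verification of the hypotheses of \cref{un.thm.sch} and the Lipschitz bookkeeping for $\delta_1,\delta_2$ are more explicit than the paper's, but substantively identical.

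The one genuine difference is the final step. The paper simply cites \cite{AN:15} for the implication ``$\rho_1^{1/2}\Delta\rho_1^{-1/2}=\rho_2^{1/2}\Delta\rho_2^{-1/2}$ and $\rho_1|_{\partial\Omega}=\rho_2|_{\partial\Omega}$ imply $\rho_1=\rho_2$''. You instead give a self-contained argument: both $u_m=\rho_m^{-1/2}$ solve $(-\Delta+q)u_m=0$ with the same $q$ and boundary data, so it suffices to exclude $0$ from the Dirichlet spectrum of $-\Delta+q$; you do this by the ground-state substitution based on the explicit positive solution $\rho_0^{-1/2}$ of $(-\Delta+q_0)u=0$, which yields $\lambda_1(-\Delta+q_0;\Omega)>0$, and then a perturbation in $\|q-q_0\|_\infty$. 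This is a clean, elementary replacement for the external reference and makes transparent why a further smallness restriction on $\delta_1$ may be needed at this stage. The paper's citation, on the other hand, covers the identification of $\rho$ without any additional smallness at this step, so each approach has its advantage.
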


\begin{proof}[Proof of \cref{ac.thm.two}] Put 
\begin{equation*}
  v_j(x,\omega) = \rho_j^{\frac 1 2}(x) \Delta \rho_j^{-\frac 1 2}(x) + \omega^2 (\rho_c\kappa_c - \kappa_j(x) \rho_j(x)), \quad k^2 = \omega^2 \rho_c \kappa_c.
\end{equation*}
Then $P_{\rho_j,\kappa_j}(x,y,\omega) = \rho_c G^+_{v_j}(x,y,k)$, where $G^+_{v_j}$ denotes the Green function for equation \cref{in.eq} defined according to \cref{in.G+}. By assumptions we obtain that
\begin{equation*}
  \Im G^+_{v_1}(x,y,k) = \Im G^+_{v_2}(x,y,k), \quad x, y \in\partial\Omega, \; k = k_1,k_2, \; k_j = \omega_j \sqrt{\rho_c\kappa_c}.
\end{equation*}
Using \cref{un.thm.sch}, we obtain that
\begin{equation*}
 v_1(x,\omega_j) = v_2(x,\omega_j), \quad x \in \Omega, \; j = 1,2.
\end{equation*}
Together with the definition of $v_j$ it follows that $\rho_1^{\frac 1 2}\Delta \rho_1^{-\frac 1 2} = \rho_2^{\frac 1 2}\Delta \rho_2^{-\frac 1 2}$ and $\kappa_1 = \kappa_2$. In turn, the equality $\rho_1^{\frac 1 2}\Delta \rho_1^{-\frac 1 2} = \rho_2^{\frac 1 2}\Delta \rho_2^{-\frac 1 2}$ together with the boundary conditions $\rho_1|_{\partial \Omega} = \rho_2|_{\partial\Omega} = \rho_c$ imply that $\rho_1 = \rho_2$, see, e.g., \cite{AN:15}. 
\end{proof}

\begin{theorem}\label{ac.thm.one} Let $\Omega$ satisfy \cref{in.dom}, $d \geq 2$, and suppose that $\rho_0$, $\kappa_0$ satisfy \cref{in.rho}, \cref{in.kappa} for some known $\rho_c$, $\kappa_c$. Let $\omega$ be such that $\Re \mathcal P_{\rho_0,\kappa_0,\omega}$ is injective in $H^{-\frac 1 2}(\partial \Omega)$ and \cref{ac.Peig} holds true with $\rho=\rho_0$, $\kappa=\kappa_0$. Besides, let $\kappa_1$, $\kappa_2$ satisfy \cref{in.kappa}. Then there exists $\delta = \delta(\Omega,\omega,\kappa_0,\rho_0)$ such that the bounds 
\begin{equation*}
 \begin{gathered}
\|\kappa_1-\kappa_0\|_\infty < \delta, \quad \|\kappa_2-\kappa_0\|_\infty < \delta,
 \end{gathered} 
\end{equation*}
and the equality $\Im P_{\rho_0,\kappa_1}(x,y,\omega) = \Im P_{\rho_0,\kappa_2}(x,y,\omega)$ for all $x, y \in \partial \Omega$ imply that $\kappa_1 = \kappa_2$.
\end{theorem}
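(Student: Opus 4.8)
The plan is to specialize the argument for \cref{ac.thm.two}: I would reduce the acoustic problem to the Schr\"odinger problem via the Liouville transformation and then invoke \cref{un.thm.sch}. The reason a single frequency suffices here is that the density $\rho_0$ is the same (and known) in both configurations, so the two induced potentials differ only through $\kappa$. Concretely, I would set
\[
  v_j(x) := \rho_0^{1/2}(x)\,\Delta\rho_0^{-1/2}(x) + \omega^2\bigl(\rho_c\kappa_c - \kappa_j(x)\rho_0(x)\bigr),\quad j=0,1,2,\qquad k^2 := \omega^2\rho_c\kappa_c,
\]
so that, exactly as in the proof of \cref{ac.thm.two}, $P_{\rho_0,\kappa_j}(x,y,\omega)=\rho_c\,G^+_{v_j}(x,y,k)$ with $G^+_{v_j}$ the Green function of \cref{in.eq}. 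Since $\rho_0$ is continuous and bounded away from zero on $\overline\Omega$ by \cref{in.rho}, and $\kappa_j\in L^\infty$, each $v_j$ belongs to $L^\infty(\Omega,\mathbb R)$; and on $\mathbb R^d\setminus\Omega$ one has $\rho_0\equiv\rho_c$ (so $\Delta\rho_0^{-1/2}=0$) and $\kappa_j\equiv\kappa_c$, whence $v_j\equiv 0$ there. Thus every $v_j$ satisfies \cref{in.v}.

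Because the density is unchanged, $v_1-v_2=\omega^2\rho_0(\kappa_2-\kappa_1)$ and $v_j-v_0=\omega^2\rho_0(\kappa_0-\kappa_j)$ on $\Omega$; since $\rho_0>0$ there, $v_1=v_2$ is equivalent to $\kappa_1=\kappa_2$, and $\|v_j-v_0\|_\infty\le\omega^2\|\rho_0\|_\infty\,\|\kappa_j-\kappa_0\|_\infty$. It remains to transfer the hypotheses to $v=v_0$. The integral kernels of $\mathcal P_{\rho_0,\kappa_0,\omega}$ and $\mathcal G_{v_0}(k)$ coincide up to the positive factor $\rho_c$, so $\mathcal P_{\rho_0,\kappa_0,\omega}=\rho_c\mathcal G_{v_0}(k)$ and hence $\Re\mathcal P_{\rho_0,\kappa_0,\omega}=\rho_c\Re\mathcal G_{v_0}(k)$; thus the assumed injectivity of $\Re\mathcal P_{\rho_0,\kappa_0,\omega}$ in $H^{-1/2}(\partial\Omega)$ gives injectivity of $\Re\mathcal G_{v_0}(k)$ in $H^{-1/2}(\partial\Omega)$. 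Likewise $\widetilde{\mathcal P}_{\rho_0,\kappa_0,\omega}=T(k)\mathcal P_{\rho_0,\kappa_0,\omega}T^*(k)=\rho_c\widetilde{\mathcal G}_{v_0}(k)$, so the eigenvalues of $\widetilde{\mathcal P}_{\rho_0,\kappa_0,\omega}$ are precisely $\rho_c$ times those of $\widetilde{\mathcal G}_{v_0}(k)$; multiplication by the positive scalar $\rho_c$ preserves the implication ``$\Im\lambda_1=\Im\lambda_2\Rightarrow\Re\lambda_1=\Re\lambda_2$'', so the assumed \cref{ac.Peig} is equivalent to \cref{un.Geig} with $v=v_0$.

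Therefore \cref{un.thm.sch} applies to $v_0$ at this $k$ and yields some $\delta_{\mathrm{S}}=\delta_{\mathrm{S}}(\Omega,k,v_0)>0$; I would then set $\delta:=\delta_{\mathrm{S}}/(\omega^2\|\rho_0\|_\infty)$, which depends only on $\Omega,\omega,\kappa_0,\rho_0$ (as $k$ and $v_0$ are determined by these data). If $\|\kappa_j-\kappa_0\|_\infty<\delta$ then $\|v_j-v_0\|_\infty<\delta_{\mathrm{S}}$, and the hypothesis $\Im P_{\rho_0,\kappa_1}(x,y,\omega)=\Im P_{\rho_0,\kappa_2}(x,y,\omega)$ on $\partial\Omega$ becomes, via $P_{\rho_0,\kappa_j}=\rho_c G^+_{v_j}$, the equality $\Im G^+_{v_1}(x,y,k)=\Im G^+_{v_2}(x,y,k)$ for all $x,y\in\partial\Omega$. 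By \cref{un.thm.sch} one concludes $v_1=v_2$ on $\Omega$, hence $\kappa_1=\kappa_2$. I do not anticipate a genuine obstacle: the only steps needing care are checking that both reference models $(\rho_0,\kappa_0)$ induce the very same reference potential $v_0$ and that the injectivity and eigenvalue conditions transfer under scaling by $\rho_c$ --- everything else is the specialization of the proof of \cref{ac.thm.two} to a fixed density, which is exactly why one frequency is enough.
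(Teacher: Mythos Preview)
Your proposal is correct and follows exactly the same route as the paper's proof: reduce to the Schr\"odinger setting via the Liouville transformation $v_j=\rho_0^{1/2}\Delta\rho_0^{-1/2}+\omega^2(\rho_c\kappa_c-\kappa_j\rho_0)$, apply \cref{un.thm.sch} at $k=\omega\sqrt{\rho_c\kappa_c}$, and read off $\kappa_1=\kappa_2$ from $v_1=v_2$. You are in fact more explicit than the paper about why the hypotheses of \cref{un.thm.sch} transfer (the scaling $\mathcal P_{\rho_0,\kappa_0,\omega}=\rho_c\,\mathcal G_{v_0}(k)$ carries both the injectivity of the real part and the eigenvalue separation condition \cref{ac.Peig} over to \cref{un.Geig}) and about the quantitative choice of $\delta$.
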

\begin{proof}[Proof of \cref{ac.thm.one}] In analogy to the proof of \cref{ac.thm.two}, put
\begin{equation*}
  v_j(x,\omega) = \rho_0^{\frac 1 2}(x) \Delta \rho_0^{-\frac 1 2}(x) + \omega^2 (\rho_c\kappa_c - \kappa_j(x) \rho_0(x)).
\end{equation*}
Then $P_{\rho_0,\kappa_j}(x,y,\omega) = \rho_c G^+_{v_j}(x,y,k)$, where $G^+_{v_j}$ denotes the Green function for equation \cref{in.eq} defined according to \cref{in.G+}. By assumptions we obtain that 
\begin{equation*}
  \Im G^+_{v_1}(x,y,k) = \Im G^+_{v_2}(x,y,k), \quad x,y \in \partial\Omega.
\end{equation*}
Using \cref{un.thm.sch} we obtain that
\begin{equation*}
  v_1(x,\omega) = v_2(x,\omega), \quad x \in \Omega.
\end{equation*}
Now it follows from the definition of $v_j$  that $\kappa_1 = \kappa_2$. 
\end{proof}

The following uniqueness theorem for the coefficient $\kappa$ only does not require smallness of 
this coefficient, but only smallness of the frequency $\omega$. Note that it is not an 
immediate corollary to \cref{un.thm.sch} since the constant $\delta$ in \cref{un.thm.sch} depends 
on $k$. 

\begin{theorem}\label{un.thm.hel} Let $\Omega$ satisfy \cref{in.dom}, $d \geq 2$, and assume that $\rho\equiv 1$ and $\kappa_c=1$ so that \cref{in.ac} 
reduces to the Helmholtz equation 
\[
\Delta p +\omega^2 \kappa(x) p = f.
\]
Moreover, suppose that $\kappa_1$ and $\kappa_2$ are two functions satisfying \cref{in.kappa} and
\begin{equation*}
  \|\kappa_1\|_\infty \leq M, \quad \|\kappa_2\|_\infty \leq M
\end{equation*}
for some $M>0$. Then for there exists $\omega_0 = \omega_0(\Omega,M) > 0$ such that if 
$\Im P_{1,\kappa_1}(x,y,\omega) = \Im P_{1,\kappa_2}(x,y,\omega)$ for all $x$, $y \in \partial \Omega$, for some fixed $\omega\in (0,\omega_0]$, then $\kappa_1 = \kappa_2$.
\end{theorem}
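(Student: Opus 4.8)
My plan is to run the chain of implications of \cref{in.fig.sch} with the reference potential $v_0=0$, exploiting that the Schr\"odinger potential attached to a sound speed of order one becomes small at low frequency. Exactly as in the proof of \cref{ac.thm.two}, set $v_j(x):=\omega^2\bigl(1-\kappa_j(x)\bigr)$ and $k:=\omega$, so that $P_{1,\kappa_j}(x,y,\omega)=G^+_{v_j}(x,y,\omega)$; the bound $\|\kappa_j\|_\infty\le M$ gives the \emph{uniform} estimate $\|v_j\|_\infty\le\omega^2(1+M)$. Writing $\lambda_1(\Omega)>0$ for the first Dirichlet eigenvalue of $-\Delta$ in $\Omega$, one has $\omega^2<\lambda_1(\Omega)-\omega^2(1+M)$ for $\omega$ small (depending only on $\Omega,M$), so $k^2=\omega^2$ is not a Dirichlet eigenvalue of $-\Delta+v_j$ in $\Omega$; hence \cref{in.v}, \cref{in.dom}, \cref{in.Dir} hold with $v=v_j$, $k=\omega$, and the operators of \cref{re.thm.ABr} and \cref{re.thm.AtBtr} are defined for $j=1,2$. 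Abbreviate $A_{v_j}:=\Re\mathcal G_{v_j}(\omega)$, $B_{v_j}:=\Im\mathcal G_{v_j}(\omega)$, and $\widetilde A_{v_j}$, $\widetilde B_{v_j}$ for the corresponding operators on $L^2(S^{d-1})$.

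Next I would transport the hypothesis, which reads $B_{v_1}=B_{v_2}$. Since $\Re\bigl(T\mathcal G T^*\bigr)=T(\Re\mathcal G)T^*$ and $\Im\bigl(T\mathcal G T^*\bigr)=T(\Im\mathcal G)T^*$, conjugating by $T(\omega)$ gives $\widetilde B_{v_1}=\widetilde B_{v_2}$. By \cref{re.thm.AtBtr}, each $\widetilde A_{v_j}$ is compact and self-adjoint, commutes with $\widetilde B_{v_j}$, and satisfies $\widetilde A_{v_j}^{\,2}=\widetilde B_{v_j}-\widetilde B_{v_j}^{\,2}$; taking the nonnegative square root, $|\widetilde A_{v_j}|=\bigl(\widetilde B_{v_j}-\widetilde B_{v_j}^{\,2}\bigr)^{1/2}$, which is therefore determined by the data, and $|\widetilde A_{v_1}|=|\widetilde A_{v_2}|$.

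The one step specific to the low-frequency regime is the sign fixing $|\widetilde A_{v_j}|\mapsto\widetilde A_{v_j}$ (this is \cref{un.prp.hel}), which I would obtain by showing that $\widetilde A_{v_j}$ is \emph{positive definite}, so that $\widetilde A_{v_j}=|\widetilde A_{v_j}|$. Since $\widetilde A_v=T(\omega)A_v T^*(\omega)$ and $T(\omega)$ has dense range, for $f\ne0$ one has $\langle\widetilde A_v f,f\rangle=\langle A_v\,T^*(\omega)f,\,T^*(\omega)f\rangle$ with $T^*(\omega)f\ne0$; hence it suffices that $A_v=\Re\mathcal G_v(\omega)$ be coercive on $H^{-1/2}(\partial\Omega)$ for all $v$ with $\|v\|_\infty\le\omega^2(1+M)$, for $\omega$ small. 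For this I would use that $\Re\mathcal G_0(\omega)$ converges, as $\omega\to0$, in $L\bigl(H^{-1/2}(\partial\Omega),H^{1/2}(\partial\Omega)\bigr)$ to the Laplace single-layer operator (coercive on $H^{-1/2}(\partial\Omega)$) — up to a nonnegative rank-one term with coefficient $\sim-\tfrac1{2\pi}\log\omega\to+\infty$, present only for $d=2$, which only reinforces coercivity — together with the perturbation bound $\|\mathcal G_v(\omega)-\mathcal G_0(\omega)\|_{H^{-1/2}\to H^{1/2}}\le C(\Omega,\omega)\|v\|_\infty$, $C(\Omega,\omega)$ bounded as $\omega\to0$ for $d\ge3$ and $O(|\log\omega|)$ for $d=2$, coming from the Born identity $\mathcal G_v(\omega)-\mathcal G_0(\omega)=-\gamma_{\partial\Omega}R^+_v(\omega)\,v\,S_0(\omega)$ (with $S_0(\omega)$ the free single-layer potential and $\gamma_{\partial\Omega}$ the trace on $\partial\Omega$). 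Granting this, $\widetilde A_{v_1}=|\widetilde A_{v_1}|=|\widetilde A_{v_2}|=\widetilde A_{v_2}$, hence $\widetilde{\mathcal G}_{v_1}(\omega)=\widetilde{\mathcal G}_{v_2}(\omega)$; injectivity of $T(\omega)$ and density of its range give $\mathcal G_{v_1}(\omega)=\mathcal G_{v_2}(\omega)$, i.e.\ $G^+_{v_1}(x,y,\omega)=G^+_{v_2}(x,y,\omega)$ for all $x,y\in\partial\Omega$; and the fixed-energy uniqueness for the full Green function \cite{berezanskii:58,novikov:88} then forces $v_1=v_2$ on $\Omega$, whence $\omega^2(1-\kappa_1)=\omega^2(1-\kappa_2)$ and $\kappa_1=\kappa_2$.

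The main obstacle is the uniform-in-$\omega$ coercivity just used, and it is exactly the point distinguishing this theorem from a direct application of \cref{un.thm.sch}: one must check that the threshold on $\|v\|_\infty$ below which $\Re\mathcal G_v(\omega)$ stays coercive on $H^{-1/2}(\partial\Omega)$ remains bounded below by a constant depending only on $\Omega$ (up to a harmless $|\log\omega|^{-1}$ factor when $d=2$), so that it eventually dominates the actual size $\omega^2(1+M)\to0$ of our potentials; then $\omega_0=\omega_0(\Omega,M)$ is chosen so that $\omega^2(1+M)$ lies below this threshold and below $\lambda_1(\Omega)$. The technical care needed is (i) tracking the mild (at worst logarithmic) degeneration as $\omega\to0$ of the single-layer and volume-potential operators entering the Born identity, and (ii) handling, for $d=2$, the logarithmically divergent additive constant in the free single-layer operator, which forces one to split $L^2(\partial\Omega)$ into constant and mean-zero parts before estimating the quadratic form.
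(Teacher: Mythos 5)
Your proposal is correct and follows the same overall route as the paper (the chain of \cref{in.fig.sch}): reduce to the Schr\"odinger problem with $v_j=\omega^2(1-\kappa_j)$, $k=\omega$, pass from $\Im\mathcal G_{v_j}$ to $\Im\widetilde{\mathcal G}_{v_j}$, use \cref{re.AtBtr.1} to recover $|\widetilde A_j|$ from the data, fix the sign by low-frequency positivity, and conclude via injectivity of $T$ and fixed-energy uniqueness for the full Green function. The differences lie in how two intermediate steps are carried out. For the sign-fixing step the paper invokes \cref{un.prp.hel}, proved with the inertia machinery of \cref{sec:inertia} (\cref{in.lem.in,in.lem.add,in.lem.AB}) applied to the decomposition of $\Re\mathcal G_{-\omega^2\kappa}(\omega)$ around $\mathcal E$ (resp.\ $\mathcal E_r$ plus a nonnegative rank-one logarithmic term for $d=2$, cf.\ \cref{mp.T}, \cref{mp.lem.S1S2est}, \cref{mp.lem.T}); you instead argue directly that the quadratic form of $\Re\mathcal G_v(\omega)$ on $H^{-1/2}(\partial\Omega)$ is uniformly coercive for $\|v\|_\infty\le\omega^2(1+M)$ and small $\omega$. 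That more elementary argument suffices here because only the small-$\omega$ positive definiteness is needed (the finite-negative-inertia statement for all $\omega$, which is what forces the paper's machinery, is only needed for \cref{un.thm.sch}); the $d=2$ subtlety you flag is handled more cleanly in the paper by replacing $\mathcal E$ with $\mathcal E_r$, $r>\mathrm{Cap}_{\partial\Omega}$, and observing that the remaining rank-one term $-\tfrac1{2\pi}(\ln\tfrac{\omega r}{2}+\gamma)\langle1,\cdot\rangle1$ is nonnegative for small $\omega$, which avoids your constants/mean-zero splitting. Second, you bypass simultaneous diagonalization and condition \cref{un.Geig} by writing $\widetilde A_j=|\widetilde A_j|=(\widetilde B_j-\widetilde B_j^2)^{1/2}$ via the functional calculus; this is equivalent to the paper's eigenbasis argument (positivity of all eigenvalues makes \cref{un.Geig} automatic) and slightly cleaner. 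Two small points of care, both covered by results in the paper so they are not gaps: your Born-type identity needs the uniform low-frequency resolvent bound of \cref{mp.lem.Rest} (and for $d=2$ the resulting factor is $O(|\log\omega|^2)$ rather than $O(|\log\omega|)$, harmless since $\omega^2|\log\omega|^2\to0$), and the final cancellation step uses injectivity of $T$ together with density of the range of $T^*$, which follows from injectivity of $T$ as in the end of the proof of \cref{un.thm.sch}.
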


\Cref{un.thm.hel} is proved in \cref{sec:uniqueness}.

\section{Mapping properties of some boundary integral operators}\label{sec:mapping}

In what follows we use the following notation:
\begin{equation*}
  \bigl(R^+_v(k) f\bigr)(x) = \int_{\Omega} G^+_v(x,y,k) f(y) \, dy, \quad x \in \Omega, \; k > 0.
\end{equation*}
\begin{remark}\label{mp.rem.R} The operator $R^+_v(k)$ is the restriction from $\mathbb R^d$ to $\Omega$ of the outgoing (limiting absorption) resolvent $k\mapsto(-\Delta + v -k^2 - i0)^{-1}$. It is known that if $v$ satisfies \eqref{in.v} and $k^2$ is not an embedded eigenvalue of $-\Delta + v(x)$ in $L^2(\mathbb R^d)$, then $R^+_v(k) \in L\bigl( L^2(\Omega), H^2(\Omega) \bigr)$, see, e.g., \cite[Thm.4.2]{agmon:75}. In turn, it is known that for the operator $-\Delta + v(x)$ with $v$ satisfying \eqref{in.v} there are no embedded eigenvalues, see \cite[Thm.14.5.5 \& 14.7.2]{hoermander:83}.
\end{remark}
Recall that the free radiating Green's function is given in terms of the 
Hankel functions $H^{(1)}_\nu$ of the first kind of order $\nu$ by
\begin{equation}\label{eq:G0}
  G^+_0(x,y,k) = \tfrac i 4 \big( \tfrac{k}{2\pi |x-y|} \big)^\nu H^{(1)}_\nu (k|x-y|)
\quad\mbox{with}\quad \nu := \tfrac d 2 - 1.
\end{equation}
In addition, we denote the single layer potential operator for the Laplace equation by 
\begin{equation}\label{mp.E}
\begin{aligned}
  &(\mathcal E f)(x) := \int_{\partial\Omega} E(x-y) f(y) \, ds(y), \quad x \in \partial\Omega \quad
	\mbox{with}\\
  &E(x-y) := 
  \begin{cases}
    - \tfrac{1}{2\pi} \ln|x-y|, & d = 2,\\
    \tfrac{1}{d(d-2)\omega_d} |x-y|^{2-d}, & d \geq 3,
  \end{cases}
\end{aligned}
\end{equation}
where $\omega_d$ is the volume of the unit $d$-ball and $E$ is the fundamental solution for the Laplace equation in $\mathbb R^d$. Note that $-\Delta_x E(x-y) = \delta_y(x)$.

\begin{lemma}\label{mp.lem.Rest} Let $v$, $v_0 \in L^\infty(\Omega,\mathbb R)$ and let $k>0$ be fixed. There exist $c_1 = c_1(\Omega,k,v_0)$, $\delta_1 = \delta_1(\Omega,k,v_0)$ such that if $\|v-v_0\|_\infty \leq \delta_1$, then
\begin{equation*}
  \| R_v^+(k) f \|_{H^2(\Omega)} \leq c_1(\Omega,k,v_0) \|f\|_{L^2(\Omega)}, \quad \text{for any $f\in L^2(\Omega)$}.
\end{equation*}
In addition, for any $M > 0$ there exist constants $c_1' = c_1'(\Omega,M)$ and $k_1 = k_1(\Omega,M)$ such that if $\|v\|_\infty \leq M$, then
\begin{equation*}
   \|R^+_{k^2 v}(k)f\|_{H^2(\Omega)}  \leq 
	\begin{cases}
	c_1'(\Omega,M)\|f\|_{L^2(\Omega)},& d\geq 3, \\
  |\ln k| \, c_1'(\Omega,M) \|f\|_{L^2(\Omega)},& d = 2
 \end{cases}
\end{equation*}
for all $f \in L^2(\Omega)$ and all $k\in (0,k_1)$.
\end{lemma}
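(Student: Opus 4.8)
The plan is to write $R^+_v(k)$, respectively $R^+_{k^2v}(k)$, through a Lippmann--Schwinger-type equation on $L^2(\Omega)$ with reference operator $R^+_{v_0}(k)$, respectively the free resolvent $R^+_0(k)$, and to control the corresponding Neumann series; for the second bound the additional task is to estimate $\|R^+_0(k)\|$ uniformly as $k\to0$, which is where the logarithm in $d=2$ comes from. For the first bound: extended by zero to $\mathbb R^d$, $v$ and $v_0$ satisfy \cref{in.v}, so by \cref{mp.rem.R} the operators $R^+_{v_0}(k),R^+_v(k)$ lie in $L(L^2(\Omega),H^2(\Omega))$; write $c_0:=\|R^+_{v_0}(k)\|_{L(L^2(\Omega),H^2(\Omega))}$ and note $\tilde c_0:=\|R^+_{v_0}(k)\|_{L(L^2(\Omega))}\le c_0$, both depending only on $\Omega,k,v_0$. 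Applying $-\Delta+v_0-k^2$ to the difference of the two full outgoing solutions and using that $v-v_0$ vanishes outside $\Omega$ gives the resolvent identity $R^+_v(k)=R^+_{v_0}(k)-R^+_{v_0}(k)(v-v_0)R^+_v(k)$ on $L^2(\Omega)$. For $\|v-v_0\|_\infty\le\delta_1:=(2\tilde c_0)^{-1}$ the operator $I+R^+_{v_0}(k)(v-v_0)$ is boundedly invertible in $L(L^2(\Omega))$ with inverse of norm $\le2$, so solving for $R^+_v(k)$ gives $\|R^+_v(k)\|_{L(L^2(\Omega))}\le 2\tilde c_0$; reinserting this into the right-hand side of the identity, whose leading factor maps into $H^2(\Omega)$, yields $\|R^+_v(k)\|_{L(L^2(\Omega),H^2(\Omega))}\le c_0+c_0\delta_1\cdot2\tilde c_0=2c_0=:c_1$.

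The key new ingredient for the second bound is an estimate of $R^+_0(k)$ that is uniform for small $k$. Fix a bounded domain $\Omega'\supset\overline\Omega$ depending only on $\Omega$, and for $f\in L^2(\Omega)$ let $u$ be the full outgoing solution of $(-\Delta-k^2)u=f$, so $u|_\Omega=R^+_0(k)f$ and $\Delta u=-k^2u-f$ on $\mathbb R^d$. From the explicit formula \cref{eq:G0} and the small-argument behaviour of the Hankel functions one obtains, for $0<k|x-y|\le1$, the uniform kernel bounds $|G^+_0(x,y,k)|\le C(\Omega)\,|x-y|^{2-d}$ when $d\ge3$ and $|G^+_0(x,y,k)|\le C(\Omega)\bigl(1+|\ln k|+\bigl|\ln|x-y|\bigr|\bigr)$ when $d=2$. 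Choosing $k_1$ small enough that $k_1|x-y|\le1$ for all $x\in\Omega'$, $y\in\Omega$, Young's inequality for the convolution over the bounded set $\{x-y:x\in\Omega',\,y\in\Omega\}$ gives, for $k\in(0,k_1)$, $\|u\|_{L^2(\Omega')}\le C(\Omega)\|f\|_{L^2(\Omega)}$ if $d\ge3$ and $\le C(\Omega)|\ln k|\,\|f\|_{L^2(\Omega)}$ if $d=2$ (for $k$ small, $|\ln k|\ge1$ absorbs the lower-order terms). Interior elliptic regularity applied to $\Delta u=-k^2u-f$ on $\Omega'$ then upgrades this to the same estimate for $\|u\|_{H^2(\Omega)}=\|R^+_0(k)f\|_{H^2(\Omega)}$, the factor $k^2$ being harmless; in particular $\|R^+_0(k)\|_{L(L^2(\Omega))}$ is $O(1)$ for $d\ge3$ and $O(|\ln k|)$ for $d=2$, uniformly on $(0,k_1)$.

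For the second bound, $R^+_{k^2v}(k)$ is well-defined by \cref{mp.rem.R} (since $k^2v$ satisfies \cref{in.v}), and the same perturbation argument with reference operator $R^+_0(k)$ and perturbing potential $k^2v$ gives $R^+_{k^2v}(k)=R^+_0(k)-k^2R^+_0(k)\,v\,R^+_{k^2v}(k)$ on $L^2(\Omega)$. Since $\|k^2R^+_0(k)v\|_{L(L^2(\Omega))}\le k^2M\|R^+_0(k)\|_{L(L^2(\Omega))}$ is $O(k^2M)$ for $d\ge3$ and $O(k^2M|\ln k|)$ for $d=2$, hence tends to $0$, there is $k_1=k_1(\Omega,M)$ such that this norm is $\le\tfrac12$ on $(0,k_1)$; then $I+k^2R^+_0(k)v$ is boundedly invertible with inverse of norm $\le2$ and moreover $2k^2M\|R^+_0(k)\|_{L(L^2(\Omega))}\le1$, so that $\|R^+_{k^2v}(k)\|_{L(L^2(\Omega))}\le2\|R^+_0(k)\|_{L(L^2(\Omega))}$. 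Reinserting this into the right-hand side of the identity, whose leading factor maps into $H^2(\Omega)$, gives $\|R^+_{k^2v}(k)f\|_{H^2(\Omega)}\le2\|R^+_0(k)\|_{L(L^2(\Omega),H^2(\Omega))}\|f\|_{L^2(\Omega)}$, which by the uniform bounds of the previous step is $\le c_1'(\Omega,M)\|f\|_{L^2(\Omega)}$ for $d\ge3$ and $\le|\ln k|\,c_1'(\Omega,M)\|f\|_{L^2(\Omega)}$ for $d=2$.

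The only genuinely delicate point is the $k\to0$ behaviour of the free resolvent: in dimension two $G^+_0(x,y,k)$ contains the non-vanishing term $-\tfrac1{2\pi}\ln k$, so $\|R^+_0(k)\|$ can only be bounded by $O(|\ln k|)$, and this is exactly the origin of the logarithmic factor in the statement (for $d\ge3$ the Hankel-function singularity of $G^+_0$ matches that of the Laplace fundamental solution and no such blowup occurs). One also has to keep the Hankel-function estimates uniform in $k$ on bounded sets, and to carry out the two bootstrap steps in the correct order --- first the $L^2(\Omega)\to L^2(\Omega)$ bound, where the Neumann series lives, and then the $L^2(\Omega)\to H^2(\Omega)$ bound obtained by reinserting it into the resolvent identity, since $I+k^2R^+_0(k)v$ is only being inverted as an operator on $L^2(\Omega)$.
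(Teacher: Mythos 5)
Your proposal is correct and follows the same overall strategy as the paper: resolvent identities plus a Neumann-series perturbation argument for both bounds, with the second bound reduced to a uniform small-$k$ estimate of the free resolvent $R^+_0(k)$. The paper proves the first bound from the identity $R^+_v(k)=R^+_{v_0}(k)\bigl(\mathrm{Id}+(v-v_0)R^+_{v_0}(k)\bigr)^{-1}$, estimating directly in $L\bigl(L^2(\Omega),H^2(\Omega)\bigr)$; your two-step variant (first an $L^2\to L^2$ bound on $R^+_v(k)$, then reinsertion into the identity to reach $H^2$) is equivalent and equally valid. The only genuinely different ingredient is the uniform small-$k$ bound on $R^+_0(k)$: the paper reads it off the expansions \cref{re.ImG+asm} and \cref{mp.ReG+asm}, i.e.\ the splitting of $G^+_0$ into the Laplace fundamental solution $E$ from \cref{mp.E} plus remainders carrying the $\ln k$ in dimension $2$ (implicitly using the $L^2(\Omega)\to H^2(\Omega)$ mapping property of the corresponding potential), whereas you use only the crude pointwise kernel bounds $|G^+_0(x,y,k)|\le C|x-y|^{2-d}$ for $d\ge 3$ and $|G^+_0(x,y,k)|\le C\bigl(1+|\ln k|+\bigl|\ln|x-y|\bigr|\bigr)$ for $d=2$ on the region $k|x-y|\le 1$, obtain an $L^2(\Omega)\to L^2(\Omega')$ bound by Young's inequality, and then upgrade to $H^2(\Omega)$ by interior elliptic regularity applied to $\Delta u=-k^2u-f$ on the auxiliary domain $\Omega'$ containing $\overline\Omega$. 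This makes explicit a step the paper leaves terse (``these formulas imply the second statement for $v=0$'') and avoids differentiating the kernel or invoking mapping properties of Newtonian-type potentials, at the modest cost of introducing $\Omega'$; the paper's route is shorter in context because the expansion \cref{mp.ReG+asm} is needed again anyway (e.g.\ for \cref{mp.lem.T}). Your bookkeeping of constants (the choice $\delta_1=(2\tilde c_0)^{-1}$, and $k_1$ so that $k^2M\|R^+_0(k)\|_{L(L^2(\Omega))}\le\tfrac12$) and the ordering of the bootstrap steps are handled correctly, so I see no gap.
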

\begin{proof} We begin by proving the first statement of the lemma. The operators $R^+_v(k)$ and $R^+_{v_0}(k)$ are related by a resolvent identity in $L^2(\Omega)$:
\begin{equation}\label{mp.res_id.1}
  R^+_v(k) = R^+_{v_0}(k) \bigl( \mathrm{Id} + (v - v_0) R^+_{v_0}(k) \bigr)^{-1}, 
\end{equation}
see, e.g., \cite[p.248]{hoermander:83} for a proof. The resolvent identity is valid, in particular, if $\|v-v_0\|_\infty < \| R^+_{v_0}(k) \|^{-1}$, where the norm is taken in $L\bigl( L^2(\Omega), H^2(\Omega) \bigr)$. It follows from \cref{mp.res_id.1} that
\begin{equation*}
  \|R^+_v(k)\| \leq \frac{\|R^+_{v_0}(k)\|}{1 - \|v-v_0\|_\infty \|R^+_{v_0}(k)\|},
\end{equation*}
where the norms are taken in $L\bigl( L^2(\Omega), H^2(\Omega) \bigr)$. Taking $\delta_1 < \|R^+_{v_0}(k)\|^{-1}$, we get the first statement of the lemma.

To prove the second statement of the lemma, we begin with the case of $v=0$. The Schwartz kernel of $R^+_0(k)$ is given by the outgoing Green function for the Helmholtz equation defined in formula \cref{eq:G0}. In particular, $\Im G^+_0(x-y,k)$ satisfies
\begin{equation}\label{re.ImG+asm}
 \Im G^+_0(x,y,k) = \frac 1 4 (2\pi)^{-\nu} k^\nu |x-y|^{-\nu} J_\nu (k|x-y|) = \frac{k^{2\nu}}{2^{2\nu+2}\pi^\nu \nu!}\bigl(1 + O(z^2)\bigr) 
\end{equation}
with the Bessel function $J_\nu = \Re H^{(1)}_\nu$ of order $\nu$, where $z = k|x-y|$ and $O$ is an entire function with $O(0)=0$. In addition,
\begin{equation}\label{mp.ReG+asm}
\Re G^+_0(x-y,k)  = \begin{cases}
E(x-y) - \tfrac{1}{2\pi}\bigl( \ln \tfrac k 2 + \gamma \bigr) ( 1 + O_2(z^2) ) + \widetilde{O}_2(z^2),&d = 2, \\
E(x-y) \bigl(1 + O_3(z^2) \bigr), & d\geq 3 \mbox{ odd}\\
E(x-y) \bigl(1 + O_d(z^2) \bigr)& \\
 \qquad - \frac{k^{d-2}}{2^{2\nu+3} \pi^{\nu+1} \nu!} \ln(z/2) (1 + \widetilde{O}_d(z^2) ),& d\geq 4 \mbox{ even}
\end{cases}
\end{equation}
where $z = k|x-y|$ and $O_d$ and $\widetilde{O}_d$  are entire functions with $O_d(0)=0=\widetilde{O}_d(0)$, 
and $\gamma$ is the Euler-Mascheroni constant, see, e.g., \cite[p.279]{mclean:00}. 
These formulas imply the second statement of the present lemma for $v = 0$.

Using the resolvent identity \cref{mp.res_id.1} we obtain
\begin{equation*}
  \|R_{k^2 v}^+(k)\| \leq \frac{\|R_0^+(k)\|}{1 - k^2 M \|R_0^+(k)\|},
\end{equation*}
if $k^2 M \|R_0^+(k)\| < 1$, where the norms are taken in $L\bigl( L^2(\Omega),H^2(\Omega) \bigr)$. This inequality together with the second statement of the lemma for $v = 0$ imply the second statement of the lemma for general $v$.
\end{proof}

\begin{lemma}\label{mp.lem.S1S2est} Let $v_0$, $v_1$, $v_2 \in L^\infty(\Omega,\mathbb R)$. Then for any $k > 0$
\begin{equation}\label{mp.S1S2cmp}
  {\mathcal G}_{v_1}(k) - {\mathcal G}_{v_2}(k) \in L\bigl( H^{-\frac 3 2}(\partial \Omega), H^{\frac 3 2}(\partial \Omega) \bigr).
\end{equation}
In addition, there exist $c_2(\Omega,k,v_0)$, $\delta_2(\Omega,k,v_0)$ such that if $\|v_1-v_0\|_\infty \leq \delta_2$, $\|v_2 - v_0\|_\infty \leq \delta_2$, then
\begin{equation}
  \|{\mathcal G}_{v_1}(k)-{\mathcal G}_{v_2}(k)\| \leq c_2(\Omega,k,v_0)\|v_1-v_2\|_\infty,
\end{equation}
where the norm is taken in $L\bigl( H^{-\frac 3 2}(\partial \Omega), H^{\frac 3 2}(\partial \Omega) \bigr)$.

Furthermore, for any $M>0$ there exist constants $c_2' = c_2'(\Omega,M)$ and $k_2 = k_2(\Omega,M)$ such that if $\|v_1\|_\infty \leq M$, $\|v_2\|_\infty \leq M$, then
\begin{equation}\label{mp.S1S2est}
  \|{\mathcal G}_{k^2 v_1}(k) - {\mathcal G}_{k^2 v_2}(k)\|  \leq 
	\begin{cases} c_2'(\Omega,M) k^2 \| v_1 - v_2 \|_\infty, & \mbox{for } d \geq 3, \\
 c_2'(\Omega,M) k^2 |\ln k|^2   \| v_1 - v_2 \|_\infty, & \mbox{for } d = 2
\end{cases}
\end{equation}
holds true for all $k\in (0,k_2)$, where the norms are taken in $L\bigl( H^{-\frac 3 2}(\partial \Omega), H^{\frac 3 2}(\partial \Omega) \bigr)$.
\end{lemma}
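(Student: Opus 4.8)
**The plan is to prove \cref{mp.lem.S1S2est} by reducing everything to a resolvent-type identity that expresses the difference of the boundary Green operators in terms of the volume resolvents studied in \cref{mp.lem.Rest}, together with the trace and single-layer-type mapping properties of the boundary integral operators.**

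First I would establish the key algebraic identity. Writing $G^+_{v_j}$ for the Green function of $-\Delta+v_j-k^2$, the second resolvent formula gives
\[
  G^+_{v_1}(x,y,k) - G^+_{v_2}(x,y,k)
  = -\int_\Omega G^+_{v_1}(x,z,k)\,(v_1(z)-v_2(z))\,G^+_{v_2}(z,y,k)\,dz,
\]
valid as an identity of kernels for $x,y$ on $\partial\Omega$ (this is the standard Lippmann–Schwinger relation; it holds without smallness of $v_j$). Integrating against densities on $\partial\Omega$ and using the definition \cref{in.G0} of ${\mathcal G}_{v_j}(k)$, this becomes the operator identity
\[
  {\mathcal G}_{v_1}(k) - {\mathcal G}_{v_2}(k) = -\,\gamma R^+_{v_1}(k)\,(v_1-v_2)\,R^+_{v_2}(k)^{*}\,\gamma^{*},
\]
where $\gamma$ denotes the trace operator from $\Omega$ to $\partial\Omega$, $R^+_{v_j}(k)$ is the volume resolvent of \cref{mp.rem.R}, multiplication by $(v_1-v_2)$ is viewed as an operator on $L^2(\Omega)$, and I identify $R^+_{v_2}(k)^{*}$ with the operator having kernel $G^+_{v_2}(z,y,k)$ via reciprocity. (The precise bookkeeping of which operator is adjointed uses $G_v(x,y,k)=G_v(y,x,k)$; I would spell this out carefully since it is the one place where symmetry of the Green function is essential.)

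Next I would combine the mapping properties. By \cref{mp.rem.R}, $R^+_{v_j}(k)\in L(L^2(\Omega),H^2(\Omega))$ (no embedded eigenvalues for $v_j\in L^\infty$), so $\gamma R^+_{v_j}(k)\in L(L^2(\Omega),H^{3/2}(\partial\Omega))$ by the trace theorem, and dually $R^+_{v_j}(k)^{*}\gamma^{*}\in L(H^{-3/2}(\partial\Omega),L^2(\Omega))$; multiplication by $v_1-v_2\in L^\infty(\Omega)$ is bounded on $L^2(\Omega)$ with norm $\|v_1-v_2\|_\infty$. Composing the three factors yields \cref{mp.S1S2cmp} and, in fact, the bound $\|{\mathcal G}_{v_1}(k)-{\mathcal G}_{v_2}(k)\|\le \|\gamma R^+_{v_1}(k)\|\,\|v_1-v_2\|_\infty\,\|\gamma R^+_{v_2}(k)\|$ in $L(H^{-3/2},H^{3/2})$. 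For the second statement I insert the uniform bound $\|R^+_{v_j}(k)\|_{L(L^2,H^2)}\le c_1(\Omega,k,v_0)$ from the first part of \cref{mp.lem.Rest}, valid when $\|v_j-v_0\|_\infty\le\delta_1$, which gives $c_2=(\text{trace constant})^2 c_1^2$ and $\delta_2=\delta_1$. For the third statement I replace $v_j$ by $k^2 v_j$ and insert the small-$k$ bound from the second part of \cref{mp.lem.Rest}: since $(k^2 v_1)-(k^2 v_2)=k^2(v_1-v_2)$, I pick up a factor $k^2$ from the potential difference and factors of $1$ (for $d\ge3$) or $|\ln k|$ (for $d=2$) from each of the two resolvents, producing the stated $k^2$ and $k^2|\ln k|^2$ behaviour on $(0,k_2)$ with $k_2=k_1$.

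**The main obstacle** is justifying the kernel identity at the level of boundary traces and getting the adjoint/reciprocity bookkeeping exactly right: the Lippmann–Schwinger formula is classically stated for $x,y$ in the whole space, and one must check that restricting to $x,y\in\partial\Omega$ and then integrating against $H^{-3/2}(\partial\Omega)$ densities is legitimate — i.e.\ that the composition $\gamma R^+_{v_1}(k)(v_1-v_2)R^+_{v_2}(k)^{*}\gamma^{*}$ really reproduces ${\mathcal G}_{v_1}(k)-{\mathcal G}_{v_2}(k)$ as elements of $L(H^{-3/2},H^{3/2})$ and not merely on a dense subspace. This is handled by a density argument: both sides are bounded operators $H^{-3/2}(\partial\Omega)\to H^{3/2}(\partial\Omega)$ (the left side by the basic mapping properties of ${\mathcal G}_{v_j}(k)$ in \cite[Ch.7]{mclean:00}, the right side by the composition above), and they agree on smooth densities by Fubini and the pointwise Lippmann–Schwinger identity, hence everywhere. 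Everything else is a routine chaining of the already-established estimates.
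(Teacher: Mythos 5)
Your proposal is correct and follows essentially the paper's own route: the second resolvent identity $R^+_{v_2}(k)-R^+_{v_1}(k)=R^+_{v_1}(k)(v_1-v_2)R^+_{v_2}(k)$, sandwiched between the trace map $\gamma$ and its dual $\gamma^*$, combined with the $L^2(\Omega)\to H^2(\Omega)$ resolvent bounds of \cref{mp.lem.Rest} to produce both the smoothing property and the three estimates. The only cosmetic difference is that the paper works directly at the operator level via ${\mathcal G}_{v_j}(k)=\gamma R^+_{v_j}(k)\gamma^*$, so no adjoint of the resolvent ever appears (by reciprocity the relevant operator is the transpose of $R^+_{v_2}(k)$, i.e.\ $R^+_{v_2}(k)$ itself, not its Hermitian adjoint $R^-_{v_2}(k)$), which sidesteps the kernel-level Fubini/density argument you flag as the main obstacle.
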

\begin{proof} Note that
\begin{equation}\label{mp.GviaR}
  {\mathcal G}_{v_j}(k) = \gamma R_{v_j}^+(k) \gamma^*, \quad j = 1,2,
\end{equation}
where
\begin{align*}
 \gamma & \in L\left( H^s(\Omega), H^{s-\frac 1 2}(\partial \Omega)\right) \quad \mbox{and}\quad 
 \gamma^* \in L \left( H^{-s+\frac 1 2}(\partial \Omega), \widetilde H^{-s}(\Omega) \right)
\end{align*}
for $s \in (\tfrac 1 2,2]$
are the trace map and its dual (see, e.g., \cite[Thm.3.37]{mclean:00}). Here $H^s(\Omega)$ denotes the space of distributions $u$ on $\Omega$, which are the restriction of some 
$U\in H^s({\mathbb R}^d)$ to $\Omega$, i.e.\ $u=U|_{\Omega}$, whereas $\widetilde{H}^s(\Omega)$ denotes the closure of 
the space of distributions on $\Omega$ in $H^s({\mathbb R}^d)$ (see \cite{mclean:00}). Recall that 
for a Lipschitz domain we have $H^s(\Omega)^*=\widetilde{H}^{-s}(\Omega)$ for all $s\in{\mathbb R}$ 
(\cite[Thm.\ 3.30]{mclean:00}). 

The operators $R^+_{v_1}(k)$ and $R^+_{v_2}(k)$ are subject to the resolvent identity
\begin{equation}\label{mp.res_id.3}
  R^+_{v_2}(k) - R^+_{v_1}(k) = R^+_{v_1}(k)\bigl(v_1 - v_2\bigr)R^+_{v_2}(k),
\end{equation}
see, e.g., \cite[p.248]{hoermander:83} for the proof. Together with \cref{mp.res_id.3} we obtain that 
\begin{equation}\label{mp.S1S2R1R2}
  {\mathcal G}_{v_2}(k) - {\mathcal G}_{v_1}(k) = \gamma R^+_{v_1}(k)(v_1-v_2)R^+_{v_2}(k)\gamma^*.
\end{equation}
It follows from \cref{mp.rem.R} and from a duality argument that $R_v^+(k)$ is bounded in $L\bigl(L^2(\Omega),H^2(\Omega) \bigr)$ and in $L\bigl( H^{-2}(\Omega),L^2(\Omega) \bigr)$. Taking into account that all 
maps in the sequence
\begin{gather*}
  H^{-\frac 3 2}(\partial \Omega) \overset{\gamma^*}{\longrightarrow} \widetilde H^{-2}(\Omega) \overset{R^+_{v_2}(k)}{\longrightarrow} L^2(\Omega) \overset{v_1-v_2}{\longrightarrow} L^2(\Omega) 
  \overset{R^+_{v_1}(k)}{\longrightarrow} H^2(\Omega) \overset{\gamma}{\longrightarrow} H^{\frac 3 2}(\partial \Omega)
\end{gather*}
are continuous, we get \cref{mp.S1S2cmp}.

It follows from \cref{mp.S1S2R1R2} that there exists $c_2'' = c_2''(\Omega)$ such that
\begin{equation*}
  \|{\mathcal G}_{v_1}(k) - {\mathcal G}_{v_2}(k)\| \leq c_2''(\Omega) \|R^+_{v_1}(k)\| \|R^+_{v_2}(k)\| \|v_1-v_2\|_\infty,
\end{equation*}
where the norm on the left is taken in $L\bigl( H^{-\frac 3 2}(\partial \Omega), H^{\frac 3 2}(\partial \Omega) \bigr)$, and the norms on the right are taken in $L\bigl( L^2(\Omega),H^2(\Omega) \bigr)$. Using this estimate and \cref{mp.lem.Rest} we obtain the second assertion of the present lemma. Using the estimate for a pair of potentials $(k^2 v_1, k^2 v_2)$ instead of $(v_1, v_2)$ and using \cref{mp.lem.Rest} we obtain the third assertion of the present lemma.
\end{proof}

\begin{lemma}\label{re.lem.Fred} Suppose that \cref{in.dom} holds true and $v$ satisfies \cref{in.v}. Then ${\mathcal G}_v(k)$and $\Re {\mathcal G}_v(k)$ are Fredholm operators of index zero in spaces $L(H^{s-\frac 1 2}(\partial\Omega),H^{s+\frac 1 2}(\partial\Omega))$, $s \in [-1,1]$, real analytic in $k \in (0,+\infty)$. If, in addition, $v \in C^\infty(\mathbb R^d,\mathbb R)$, then ${\mathcal G}_v(k)$ is boundedly invertible in these spaces if and only if \cref{in.Dir} holds.
\end{lemma}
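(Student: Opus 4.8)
The plan is to realize both $\mathcal{G}_v(k)$ and $\Re\mathcal{G}_v(k)$ as compact perturbations of operators that are already understood, and then to read off the invertibility criterion from the classical jump relations of single-layer potential theory. It is classical (see, e.g., \cite[Chapter 7]{mclean:00}) that the free single-layer operator $\mathcal{G}_0(k)$ is Fredholm of index zero in $L\bigl(H^{s-\frac12}(\partial\Omega),H^{s+\frac12}(\partial\Omega)\bigr)$ for $s\in[-1,1]$ and depends real-analytically on $k\in(0,+\infty)$, the latter via the explicit formula \cref{eq:G0}; by \cref{mp.lem.S1S2est} with $v_2=0$ the difference $\mathcal{G}_v(k)-\mathcal{G}_0(k)$ lies in $L\bigl(H^{-\frac32}(\partial\Omega),H^{\frac32}(\partial\Omega)\bigr)$ and, composed with the Sobolev embeddings $H^{s-\frac12}(\partial\Omega)\hookrightarrow H^{-\frac32}(\partial\Omega)$ and $H^{\frac32}(\partial\Omega)\hookrightarrow H^{s+\frac12}(\partial\Omega)$ (at least one of which is compact for each $s\in[-1,1]$), defines a compact operator $H^{s-\frac12}(\partial\Omega)\to H^{s+\frac12}(\partial\Omega)$. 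Hence $\mathcal{G}_v(k)$ is bounded and Fredholm of index zero in the asserted spaces. For the real-analyticity of $k\mapsto\mathcal{G}_v(k)$ I would combine the resolvent identities \cref{mp.res_id.1} and \cref{mp.S1S2R1R2} (with $v_0=v_2=0$) into
\begin{equation*}
\mathcal{G}_v(k)=\mathcal{G}_0(k)-\gamma R^+_0(k)\bigl(\mathrm{Id}+vR^+_0(k)\bigr)^{-1}vR^+_0(k)\gamma^*,
\end{equation*}
where $k\mapsto R^+_0(k)$ is real-analytic as an $L\bigl(L^2(\Omega),H^2(\Omega)\bigr)$-valued map; the factor $\mathrm{Id}+vR^+_0(k)$ is invertible on $L^2(\Omega)$ for every $k>0$, since it is a compact perturbation of the identity and $(\mathrm{Id}+vR^+_0(k))\phi=0$ forces $u:=R^+_0(k)\phi$ (extended to $\mathbb{R}^d$) to be an outgoing solution of $(-\Delta+v-k^2)u=0$ whose farfield vanishes by Green's identity over a large ball (as $v,k^2$ are real), hence $u\equiv0$ by Rellich's lemma and unique continuation and $\phi=0$. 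Inversion being analytic, $k\mapsto\mathcal{G}_v(k)$, and likewise $k\mapsto\Re\mathcal{G}_v(k)$, $k\mapsto\Im\mathcal{G}_v(k)$ (whose kernels $\Re G^+_v$, $\Im G^+_v$ inherit the dependence on $k$), are real-analytic.

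For $\Re\mathcal{G}_v(k)=\mathcal{G}_v(k)-i\,\Im\mathcal{G}_v(k)$ it now suffices to show that $\Im\mathcal{G}_v(k)$ is compact in the same spaces, for then $\Re\mathcal{G}_v(k)$ is a compact perturbation of $\mathcal{G}_v(k)$ and hence Fredholm of index zero. The crucial observation is that, since $\delta_y$ is real, the Schwartz kernel $\Im G^+_v(\cdot,y,k)=\tfrac1{2i}\bigl(G^+_v(\cdot,y,k)-\overline{G^+_v(\cdot,y,k)}\bigr)$ solves the \emph{homogeneous} equation $(-\Delta+v-k^2)\Im G^+_v(\cdot,y,k)=0$ in all of $\mathbb{R}^d$; being locally integrable and solving this equation with $v\in L^\infty$, it is continuous (by elliptic bootstrapping), and by reciprocity the same holds in the $y$ variable. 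Consequently the potential $w(x):=\int_{\partial\Omega}\Im G^+_v(x,y,k)u(y)\,ds(y)$ carries no jump of its normal derivative across $\partial\Omega$, so it solves $(-\Delta+v-k^2)w=0$ in all of $\mathbb{R}^d$; being locally square-integrable (as a single-layer-type potential of $H^{-\frac32}(\partial\Omega)$ data), it lies in $H^2_{\mathrm{loc}}(\mathbb{R}^d)$ by elliptic regularity, continuously in $u$. Taking traces gives $\Im\mathcal{G}_v(k)\in L\bigl(H^{-\frac32}(\partial\Omega),H^{\frac32}(\partial\Omega)\bigr)$, and composing with the compact Sobolev embeddings as above yields compactness $H^{s-\frac12}(\partial\Omega)\to H^{s+\frac12}(\partial\Omega)$ for all $s\in[-1,1]$.

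Finally, assume $v\in C^\infty(\mathbb{R}^d,\mathbb{R})$. As $\mathcal{G}_v(k)$ is Fredholm of index zero it is boundedly invertible if and only if it is injective, and I would characterize injectivity through the classical jump relations $u^\pm|_{\partial\Omega}=\mathcal{G}_v(k)\varphi$, $\partial_\nu u^+-\partial_\nu u^-=-\varphi$ for the single-layer potential $u=\mathcal{S}^+_v(k)\varphi$. If \cref{in.Dir} holds and $\mathcal{G}_v(k)\varphi=0$, then $u$ solves $(-\Delta+v-k^2)u=0$ in $\Omega$ with vanishing Dirichlet trace, so $u=0$ in $\Omega$; it also solves the exterior problem \eqref{eqs:extBVP} with zero boundary data, so $u=0$ in $\mathbb{R}^d\setminus\overline\Omega$ by the uniqueness recorded there; the jump relation then gives $\varphi=0$. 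Conversely, if $k^2$ is a Dirichlet eigenvalue of $-\Delta+v$ in $\Omega$ with (smooth) eigenfunction $\psi$, then $\varphi:=\partial_\nu\psi|_{\partial\Omega}\neq0$ --- otherwise $\psi$ has vanishing Cauchy data and hence vanishes by unique continuation --- while Green's representation formula gives $\mathcal{S}^+_v(k)\varphi=\psi$ in $\Omega$, so $\mathcal{G}_v(k)\varphi=\psi|_{\partial\Omega}=0$ and $\mathcal{G}_v(k)$ is not injective. The main obstacle is the compactness of $\Im\mathcal{G}_v(k)$: this smoothing is invisible from \cref{mp.lem.S1S2est} and must be extracted from the fact that, unlike $\mathcal{G}_v(k)$ itself, the kernel $\Im G^+_v$ solves a homogeneous elliptic equation in each variable across $\partial\Omega$, so the corresponding layer potential has no jump and enjoys full interior regularity; making the resulting $H^2$-up-to-the-boundary estimate rigorous with only the regularity $\partial\Omega\in C^{2,1}$ assumed in \cref{in.dom} is the technical heart of the argument.
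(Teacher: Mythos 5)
Your proposal is correct in substance, but it departs from the paper's proof on three of the four components, so a comparison is in order. The Fredholm property of $\mathcal G_v(k)$ is obtained exactly as in the paper (McLean for $\mathcal G_0(k)$ plus compactness of $\mathcal G_v(k)-\mathcal G_0(k)$ from \cref{mp.lem.S1S2est}). For $\Re\mathcal G_v(k)$, however, the paper simply observes that $\Im\mathcal G_0(k)$ has a smooth kernel by the explicit formula \cref{re.ImG+asm}, so that $\Re\mathcal G_v(k)$ is a compact perturbation of $\mathcal G_0(k)$; your route through the fact that $\Im G^+_v(\cdot,y,k)$ solves the homogeneous equation across $\partial\Omega$ is valid but needlessly heavy, and the step you yourself flag as ``the technical heart'' can be bypassed entirely by writing $\Im\mathcal G_v=\Im\mathcal G_0+\Im(\mathcal G_v-\mathcal G_0)$ and invoking \cref{re.ImG+asm} together with \cref{mp.lem.S1S2est}. (If you do want to keep your argument, note that since the equation for the potential $w$ holds in all of $\mathbb R^d$, only interior elliptic regularity on a ball containing $\overline\Omega$ plus the trace map of \cref{mp.lem.S1S2est}'s proof is needed, so the $C^{2,1}$ boundary is not the real obstruction.) For analyticity, the paper cites the absence of embedded eigenvalues (Agmon/H\"ormander) to get analytic continuation of $R^+_v(k)$ and then uses \cref{mp.GviaR}; your factorization $\mathcal G_v=\mathcal G_0-\gamma R^+_0(\mathrm{Id}+vR^+_0)^{-1}vR^+_0\gamma^*$ with invertibility of $\mathrm{Id}+vR^+_0(k)$ proved by Rellich's lemma and unique continuation is an essentially equivalent, more self-contained argument (it amounts to reproving the absence of positive eigenvalues), though you should check the spaces in which the resolvent identity is composed with $\gamma^*$, as in the proof of \cref{mp.lem.S1S2est}. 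Finally, for the invertibility criterion the paper cites Nachman's Theorem 1.6 and McLean's Theorem 7.17, whereas you give the standard direct proof via the jump relations, interior and exterior uniqueness, and Green's representation of a Dirichlet eigenfunction; this is correct (with $\varphi=\partial_\nu\psi\in H^{1/2}(\partial\Omega)$ lying in all the relevant spaces) and buys self-containedness at the cost of having to justify the jump relations for the perturbed kernel, which again reduces to \cref{mp.lem.S1S2est}.
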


\begin{proof} It is known that ${\mathcal G}_0(k)$ is Fredholm of index zero in the aforementioned spaces, see \cite[Thm.7.17]{mclean:00}. Besides, it follows from \cref{mp.lem.S1S2est} that ${\mathcal G}_v(k)-{\mathcal G}_0(k)$ is compact in each of the aforementioned spaces, so that ${\mathcal G}_v(k)$ is Fredholm of index zero, since the index is invariant with respect to compact perturbations. Moreover, it follows from \cref{re.ImG+asm} that $\Im {\mathcal G}_0(k)$ has a smooth  kernel, which implies that $\Re {\mathcal G}_v(k)$ is also Fredholm of index zero. 

It follows from \cite[Thm.7.17]{mclean:00} and \cite[Thm.1.6]{nachman:88} that 
for any $-1 \leq s \leq 1$ the operator ${\mathcal G}_v(k)$ is invertible in $L\bigl( H^{s-\frac 1 2}(\partial\Omega),H^{s+\frac 1 2}(\partial\Omega) \bigr)$ if and only if $k^2$ is not a Dirichlet eigenvalue of $-\Delta+v(x)$.

Now, since $v$ satisfies \cref{in.v}, operator $-\Delta+v(x)$ has no embedded point spectrum in $L^2(\mathbb R^d)$ according to \cite[Thm.4.2]{agmon:75} and \cite[Thm.14.5.5 \& 14.7.2]{hoermander:83}. It follows that $R^+_v(k)$ has analytic continuation to a neighborhood of each $k>0$ in $\mathbb C$ and the this is also true for ${\mathcal G}_v(k)$ in view of formula \cref{mp.GviaR}. Hence, $\Re {\mathcal G}_v(k)$ is real analytic for $k>0$ and the same is true for $\Re {\mathcal G}_0(k)$.
\end{proof}

Let us introduce the operator
\begin{equation}\label{mp.T}
  W(k) := \begin{cases}\Re {\mathcal G}_0(k) - \mathcal E,& d \geq 3,\\
     \Re {\mathcal G}_0(k) - \mathcal E + \tfrac{1}{2\pi}\bigl( \ln \tfrac k 2 + \gamma\bigr)\langle 1, \cdot \rangle 1,& d = 2,
\end{cases}
\end{equation}
where $\gamma$ is the Euler-Mascheroni constant, and $\langle 1, \cdot \rangle$ denotes the scalar product with $1$ in $L^2(\partial\Omega)$.

\begin{lemma}\label{mp.lem.T} There exist $c_3 = c_3(\Omega)$, $k_3 = k_3(\Omega)$ such that
$W(k)$ belongs to $K\bigl( H^{-\frac 1 2}(\partial \Omega), H^{\frac 1 2}(\partial \Omega) \bigr)$ and 
\begin{gather*}
  \|W(k)\| \leq 
	 \begin{cases} k^2 c_3(\Omega) ,&         d =3 \mbox{ or } d\geq 5,\\
  k  |\ln k| c_3(\Omega),&    d = 4,\\
  k^2 |\ln k| c_3(\Omega), & d = 2
 \end{cases}
\end{gather*}
for all $k\in (0, k_3)$, 
where the norm is taken in $L\bigl( H^{-\frac 1 2}(\partial \Omega), H^{\frac 1 2}(\partial \Omega) \bigr)$.
\end{lemma}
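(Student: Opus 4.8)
The plan is to read off the claimed estimates directly from the small-argument asymptotics of $\Re G^+_0(x,y,k)$ recorded in \cref{mp.ReG+asm}, combined with the known mapping properties of the single layer operator $\mathcal E$ and weakly singular integral operators on $\partial\Omega$. First I would observe that $W(k)$ is, by construction, the integral operator on $\partial\Omega$ whose Schwartz kernel $w_k(x,y)$ is the difference $\Re G^+_0(x,y,k) - E(x-y)$ (for $d \geq 3$), respectively that difference plus the rank-one correction $\tfrac{1}{2\pi}(\ln\tfrac k2 + \gamma)$ times the constant kernel (for $d=2$); the point of the rank-one term and of subtracting $\mathcal E$ is precisely to cancel the leading $E(x-y)$ and the $k$-dependent but $x,y$-independent logarithmic term in \cref{mp.ReG+asm}, leaving a kernel that is bounded (indeed continuous, with mild singularity) and small in $k$.

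Next I would go case by case. For $d=3$ or $d$ odd, \cref{mp.ReG+asm} gives $w_k(x,y) = E(x-y)\,O_d(k^2|x-y|^2)$, and since $E(x-y)\sim |x-y|^{2-d}$ while $O_d(z^2) = O(z^2)$ vanishes to second order, the kernel is $O(k^2 |x-y|^{4-d})$, which for $d=3$ is continuous and bounded by $k^2 c(\Omega)$ on the compact set $\partial\Omega \times \partial\Omega$; for $d=5$ the kernel is $O(k^2|x-y|^{-1})$, weakly singular with an explicit $k^2$ prefactor, hence defines an operator in $K(H^{-1/2},H^{1/2})$ (or even $K(L^2,L^2)$) with norm $\leq k^2 c(\Omega)$ by the standard estimate for integral operators with weakly singular kernels on a $C^{2,1}$ surface; for larger odd $d$ one iterates, using that higher-order terms in $O_d$ contribute even smaller singularities, and the worst term is always the $k^2|x-y|^{4-d}$ one. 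For $d \geq 4$ even, \cref{mp.ReG+asm} produces two kinds of terms: the "analytic" part $E(x-y)O_d(z^2)$, handled exactly as above and bounded by $k^2 c(\Omega)$, and the genuinely new logarithmic part $-\tfrac{k^{d-2}}{2^{2\nu+3}\pi^{\nu+1}\nu!}\ln(z/2)(1+\widetilde O_d(z^2))$. For $d=4$ this is $\sim k^2 \ln(k|x-y|)$; writing $\ln(k|x-y|/2) = \ln k + \ln(|x-y|/2)$ splits it into a rank-one-like piece $k^2 \ln k$ times a bounded operator (the integral operator with kernel $1$, times a constant) — note this is \emph{not} subtracted off for $d=4$, so it survives and gives the stated $k|\ln k|$... actually $k^2|\ln k|$ bound — and a piece $k^2 \ln(|x-y|/2)$ with a logarithmically singular, $k$-independent kernel, contributing $k^2 c(\Omega)$. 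Wait: the claimed bound for $d=4$ is $k|\ln k|c_3$, so I would need to recheck the power; in fact for $d=4$, $\nu = 1$, the coefficient $k^{d-2} = k^2$, so the honest bound is $k^2|\ln k|$, which is absorbed into $k|\ln k|c_3(\Omega)$ for $k \leq k_3$ small — so the weaker stated bound holds a fortiori. For $d=2$, one uses the first line of \cref{mp.ReG+asm}: after subtracting $E(x-y)$ and the rank-one term $\tfrac{1}{2\pi}(\ln\tfrac k2+\gamma)\langle 1,\cdot\rangle 1$, what remains is $-\tfrac{1}{2\pi}(\ln\tfrac k2 + \gamma)\,E(x-y)\,O_2(z^2)$... more carefully, the remaining kernel is a combination of $(\ln k)\cdot O(k^2|x-y|^2\ln|x-y|)$-type terms and $\widetilde O_2(k^2|x-y|^2)$ terms, all of which are bounded by $k^2|\ln k| c(\Omega)$ after using that $|x-y|^2\ln|x-y|$ and $z^2$ stay bounded on $\partial\Omega$; the surviving $\ln k$ factor comes from the $\ln\tfrac k2$ multiplying the $O_2(z^2)$ correction.

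In every case the operator has a kernel that is either continuous or weakly singular of order strictly less than $d-1$ on a compact $C^{2,1}$ hypersurface, hence defines a compact operator from $H^{-1/2}(\partial\Omega)$ to $H^{1/2}(\partial\Omega)$ (one may first bound it in $L(L^2,L^2)$ or $L(H^{-1/2},H^{1/2})$ using mapping properties of weakly singular operators as in \cite[Ch.\ 7]{mclean:00}, then note that a gain of regularity plus the compact embedding gives compactness), and its operator norm inherits the explicit $k$-prefactor from the pointwise kernel estimate by a Schur-test / Young-type argument uniformly for $k \in (0,k_3)$ with $k_3 = k_3(\Omega)$ chosen so that all the entire functions $O_d, \widetilde O_d$ evaluated at $z^2 = k^2|x-y|^2 \leq k_3^2\,\mathrm{diam}(\Omega)^2$ are bounded by an absolute constant. \textbf{The main obstacle} is purely bookkeeping: tracking, dimension by dimension and term by term in the expansions \cref{re.ImG+asm}--\cref{mp.ReG+asm}, exactly which power of $k$ (possibly times $|\ln k|$) multiplies each summand, and verifying that the subtraction of $\mathcal E$ and of the rank-one term in \cref{mp.T} kills precisely the non-compact and non-small contributions while leaving a remainder whose worst term matches the stated bound; no new analytic idea is required beyond the standard theory of weakly singular boundary integral operators.
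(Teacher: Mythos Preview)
Your proposal is correct and follows essentially the same route as the paper: both derive compactness and the $k$-dependent bounds directly from the small-argument expansion \cref{mp.ReG+asm}, with the $d=4$ bound obtained exactly as you note by absorbing $k^2|\ln k|$ into $k|\ln k|$ for small $k$ (the paper phrases this as ``$k\ln k = o(1)$''). The only minor methodological difference is in the compactness argument: the paper first observes that the smoothed remainder kernel gives $W(k)\in L\bigl(L^2(\partial\Omega),H^2(\partial\Omega)\bigr)$ and then passes to $K\bigl(H^{-1/2},H^{1/2}\bigr)$ by duality and approximation, whereas you argue via weak singularity of the kernel and compact Sobolev embeddings---both are standard and interchangeable here.
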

\begin{proof} It follows from \cref{mp.ReG+asm} that $W(k) \in L\bigl(L^2(\partial \Omega),H^2(\partial \Omega) \bigr)$. By duality and approximation we get $W(k) \in K\bigl( H^{-\frac 1 2}(\partial \Omega), H^{\frac 1 2}(\partial \Omega)\bigr)$. The estimates also follow from \cref{mp.ReG+asm}, taking into account that $k\ln k = o(1)$ as $k \searrow 0$.
\end{proof}

\section{Derivation of the relations between $\Re{\mathcal G}_v$ and $\Im{\mathcal G}_v$}\label{sec:imreG}

\subsection{Proof of \cref{re.thm.ABr}}\label{re.thm.ABrp}
It follows from \cref{re.lem.Fred} for $v=0$ that if $\lambda >0$ is such that $\lambda^2$ is not a Dirichlet eigenvalue of $-\Delta$ in $\Omega$, then $Q(\lambda) \in L(H^1(\partial\Omega),L^2(\partial\Omega))$ is well defined as stated in \cref{re.Q}. It also follows from \cref{re.lem.Fred} that $\mathcal G_v(k) \in L(L^2(\partial\Omega),H^1(\partial\Omega))$ if \eqref{in.Dir} holds.

To prove the remaining assertions of \cref{re.thm.ABr} we suppose first that in addition to the initial assumptions of \cref{re.thm.ABr} the condition \cref{in.Dir} holds true for $v=0$. Let us define the Dirichlet-to-Neumann map $\Phi_v \in L\bigl(H^{1/2}(\partial\Omega),H^{-1/2}(\partial\Omega)\bigr)$ by
$\Phi_v f:=\frac{\partial \psi}{\partial \nu}$ where $\psi$ is the solution to 
\begin{align*}
&-\Delta \psi + v\psi = k^2\psi&&\mbox{in }\Omega,\\
&\psi = f&&\mbox{on }\partial\Omega,
\end{align*}
and $\nu$ is the outward normal vector on $\partial\Omega$. Moreover, let $\Phi_0$ denote the 
corresponding operator for $v=0$. It can be shown (see  e.g., \cite[Thm.1.6]{nachman:88}) 
that under the assumptions of \cref{re.thm.ABr} together with \eqref{in.Dir} for $v=0$ these operators are related 
to ${\mathcal G}$ and ${\mathcal G}_0$ as follows:
\begin{equation}\label{re.GPhi}
  {\mathcal G}_v^{-1} - {\mathcal G}_0^{-1} = \Phi_v - \Phi_0.
\end{equation}
For an operator $\mathcal T$ between complex function spaces let $\overline{\mathcal T}f:= \overline{\mathcal T\overline{f}}$ 
denote the operator with complex conjugate Schwarz kernel, and note that 
$\overline{\mathcal T}^{-1} = \overline{\mathcal T^{-1}}$ if $\mathcal T$ is invertible. Since $v$ is assumed to be real-valued, 
it follows that $\overline{\Phi}_v=\Phi_v$. Therefore, 
taking the complex conjugate in \cref{re.GPhi}, we obtain
\begin{equation*}
  (\overline {\mathcal G}_v)^{-1} - (\overline {\mathcal G}_0)^{-1} = \Phi_v - \Phi_0.
\end{equation*}
Combining the last two equations yields 
\begin{equation}\label{re.GG0}
  ({\mathcal G}_v^{-1}) - (\overline {\mathcal G}_v)^{-1} = ({\mathcal G}_0)^{-1} - (\overline {\mathcal G}_0)^{-1}.
\end{equation}
Together with the definitions \cref{re.ABQ} of $A,B$, and $Q$, we obtain the relation
\begin{equation*}
 (A+iB)iQ(A-iB) = -iB,
\end{equation*}
which can be rewritten as the two relations \cref{re.ABr}. Thus, \cref{re.thm.ABr} is proved under the additional assumption that \eqref{in.Dir} is satisfied for $v = 0$.

Moreover, it follows from formula \eqref{re.GG0} that the mapping \eqref{re.Q} extends to all $k>0$, i.e.\ the assumption that $k^2$ is not a Dirichlet eigenvalue of $-\Delta$ in $\Omega$ can be dropped. More precisely, for any $k>0$ one can always find $v$ satisfying \eqref{in.v}, \eqref{in.Dir} such that the expression on the hand side left of formula \eqref{re.GG0} is well-defined and can be used to define $Q(k)$. The existence of such $v$ follows from monotonicity and upper semicontinuity of Dirichlet eigenvalues.

This completes the proof of \cref{re.thm.ABr}.

\subsection{Proof of \cref{re.thm.AtBtr}}\label{re.thm.AtBtrp} 

\textit{Part I.} Let $k > 0$ be fixed. The fact that $T(k)$ extends continuously to $L^2(\partial \Omega)$ and is injective there is shown in \cite[Thm.3.28]{CK:13}. More precisely, the injectivity of $T(k)$ in 
$L(L^2(\partial\Omega),L^2(S^{d-1}))$ is proved in this book only in dimension $d=3$, but the proof works in any dimension $d\geq 2$. In addition, $T(k) \in L\bigl( L^2(\partial\Omega),L^2(S^{d-1}) \bigr)$ is compact as an operator with continuous integral kernel (see \cite[(3.58)]{CK:13}).

It follows from the considerations in the last paragraph of the proof of \cref{re.thm.ABr} that for any $k>0$ there exists $v \in C^\infty(\mathbb R^d,\mathbb R)$ with 
$\mathop{\mathrm{supp}} v \subset \Omega$ such that $k^2$ is not a Dirichlet eigenvalue of $-\Delta+v(x)$ in $\Omega$ and such that $Q(k) = \Im\mathcal G_v^{-1}(k)$.

Recall the formula
\begin{equation}\label{re.stone}
  \Im G^+_v(x,y,k) = c_1(d,k) \!\!\int\limits_{S^{d-1}} \!\!\psi^+_v(x,k\omega)\overline{\psi{}^+_v(y,k\omega)}\, ds(\omega),
	\quad c_1(d,k) := \tfrac{1}{8\pi} \left(\tfrac{k}{2\pi}\right)^{d-2}
\end{equation}
where $\psi^+_v(x,k\omega)$ is the total field corresponding to the incident plane wave $e^{ik\omega x}$ 
(i.e.\ $\psi^+_v(\cdot,k\omega)$ solves \eqref{in.eq} and $\psi^+_v(\cdot,k\omega)-e^{ik\omega\cdot}$ satisfies 
the Sommerfeld radiation condition \eqref{eq:SRC}), 
see, e.g., \cite[(2.26)]{melrose:95}. It follows that the operator $\Im \mathcal G_v(k)$ admits the factorization
\begin{equation*}
  \Im \mathcal G_v(k) = c_1(d,k) H_v(k) H_v^*(k),
\end{equation*}
where the operator $H_v(k) \in L( L^2(S^{d-1}), L^2(\partial\Omega))$ 
is defined as follows:
\begin{align*}
   \bigl( H_v(k) g \bigr) (x) & := \int_{S^{d-1}} \psi^+_v(x,k\omega) g(\omega) \, ds(\omega)
\end{align*}
Recall that $H_v(k)$ with $v=0$ is the Herglotz operator, see, e.g., \cite[(5.62)]{CK:13}.

\begin{lemma}\label{re.lem.C} Under the assumption \cref{in.Dir} $H^*_v(k)$ extends to a compact, 
injective operator with dense range in $L\bigl( H^{-1}(\partial\Omega),L^2(S^{d-1})\bigr)$. 
If $(Rh)(\omega) := h(-\omega)$, the following formula holds in $H^{-1}(\partial\Omega)$:
\begin{equation}\label{re.Cvfact}
  R \overline H_v^*(k) = \frac{1}{\sqrt{k}\,\cthree(d,k)} T(k) \mathcal G_v(k)
	\quad\mbox{with } \cthree(d,k):= \tfrac{1}{4\pi}\exp\left(-i\pi \tfrac{d-3}{4}\right)
	\left(\tfrac{k}{2\pi}\right)^{\frac{d-3}{2}}
\end{equation}
\end{lemma}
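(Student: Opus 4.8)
The plan is to establish \cref{re.Cvfact} as an identity of integral operators by comparing Schwartz kernels, and then to read off the mapping properties of $H_v^*(k)$ from the already-established properties of $T(k)$ and $\mathcal G_v(k)$. First I would write out the Schwartz kernel of $H_v^*(k)$: since $(H_v(k)g)(x) = \int_{S^{d-1}}\psi^+_v(x,k\omega)g(\omega)\,ds(\omega)$, its adjoint with respect to the $L^2(\partial\Omega)$ and $L^2(S^{d-1})$ pairings has kernel $(\omega,y)\mapsto \overline{\psi^+_v(y,k\omega)}$, so $\overline{H_v^*(k)}$ has kernel $(\omega,y)\mapsto \psi^+_v(y,k\omega)$ and $R\overline{H_v^*(k)}$ has kernel $(\omega,y)\mapsto \psi^+_v(y,-k\omega)$. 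On the other side, the kernel of $T(k)\mathcal G_v(k)$ is $(\omega,y)\mapsto \sqrt{k}\int_{\partial\Omega} u_\infty^{(y)}(\omega)\,\cdots$ — more precisely, $\mathcal G_v(k)$ applied to $\delta$-like data produces the single-layer-type field $x\mapsto G^+_v(x,y,k)$ on $\partial\Omega$, and $T(k)$ maps these Dirichlet data to $\sqrt{k}$ times the farfield of the exterior Helmholtz extension. So the key analytic input is the relation between the farfield pattern of $G^+_v(\cdot,y,k)$ (as a function on $\mathbb R^d\setminus\overline\Omega$) and the plane-wave total field $\psi^+_v(y,-k\omega)$.

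The crucial step is therefore the \emph{reciprocity / mixed-reciprocity relation}: the farfield pattern of the outgoing Green function $G^+_v(\cdot,y,k)$ in direction $\hat x=-\omega$ equals, up to the explicit normalization constant $c_2(d,k)$ appearing in \cref{re.Cvfact}, the value $\psi^+_v(y,k\omega)$ of the plane-wave solution. This is classical for $d=3$ (see Colton–Kress, the mixed reciprocity relation relating point-source and plane-wave scattering) and extends to all $d\ge 2$ by the same Green's-identity argument; the constant $c_2(d,k)$ is exactly the asymptotic coefficient in the large-argument expansion of the Hankel function $H^{(1)}_\nu$ with $\nu=d/2-1$ that relates $G^+_0(x,y,k)$ to $\frac{e^{ik|x|}}{|x|^{(d-1)/2}}$ times $e^{-ik\hat x\cdot y}$ as $|x|\to\infty$. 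I would verify the constant by specializing to $v=0$, where $\psi^+_0(y,k\omega)=e^{ik\omega\cdot y}$ and both sides are explicit. Matching this with the definition of $T(k)$ (which carries the extra $\sqrt k$) and with \cref{re.stone} (which fixes the overall normalization consistency via $c_1(d,k)$) pins down $c_2(d,k)$ and gives the stated formula in $H^{-1}(\partial\Omega)$; the reason the identity is asserted only in $H^{-1}(\partial\Omega)$ rather than pointwise is that $\mathcal G_v(k)$ is only bounded $H^{-1}(\partial\Omega)\to L^2(\partial\Omega)$ after the continuous extension, so one argues first on smooth densities and then extends by density.

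Once \cref{re.Cvfact} is known, the mapping properties follow formally. Since $T(k)$ is compact, injective, with dense range $L^2(\partial\Omega)\to L^2(S^{d-1})$ by Part I of \cref{re.thm.AtBtr}, and since $\mathcal G_v(k)$ is boundedly invertible from $H^{-1}(\partial\Omega)$ onto $H^0=L^2(\partial\Omega)$ — here I use \cref{re.lem.Fred} together with the assumption \cref{in.Dir}, passing to the smooth $v$ constructed in the proof of \cref{re.thm.AtBtr} if needed, or directly citing that $\mathcal G_v(k)\in GL(H^{-1/2},H^{1/2})$ and the analogous statement in the shifted scale — the composition $T(k)\mathcal G_v(k)$ is compact $H^{-1}(\partial\Omega)\to L^2(S^{d-1})$, injective (composition of injectives, using invertibility of $\mathcal G_v$), and has dense range (image of a dense range under the surjection-onto-its-closure $\mathcal G_v$ composed with $T$ of dense range). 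Applying the reflection $R$, which is a unitary involution on $L^2(S^{d-1})$, and the complex-conjugation map, which preserves compactness, injectivity and density of range, transfers all three properties to $H_v^*(k)$ on $L(H^{-1}(\partial\Omega),L^2(S^{d-1}))$.

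The main obstacle I expect is not the soft functional-analytic bookkeeping but getting the \emph{mixed-reciprocity constant exactly right}: tracking the phase $\exp(-i\pi(d-3)/4)$ and the power $(k/2\pi)^{(d-3)/2}$ through the Hankel asymptotics in general dimension, and ensuring sign conventions for the farfield (the $e^{ik|x|}$ versus $e^{-ik\hat x\cdot y}$ and the $\hat x=-\omega$ reflection) are consistent with the definitions of $u_\infty$, $T(k)$, and $\psi^+_v$ used in the paper. The cleanest way to control this is to first prove the operator identity up to an \emph{a priori} unknown dimensional constant by the Green's-identity/reciprocity argument, and then fix the constant by evaluating both sides on the explicit $v=0$ case, cross-checking against \cref{re.stone}.
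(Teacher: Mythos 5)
Your proposal is correct and follows essentially the same route as the paper: both rest on the mixed reciprocity relation between the farfield of $G^+_v(\cdot,y,k)$ and the plane-wave total field $\psi^+_v(y,-k\omega)$, identify $T(k)\mathcal G_v(k)h$ as $\sqrt{k}$ times the farfield of the exterior extension of $\mathcal G_v(k)h$, and then transfer compactness, injectivity and dense range to $H^*_v(k)$ from $T(k)$ and the invertibility $\mathcal G_v(k)\in GL(H^{-1}(\partial\Omega),L^2(\partial\Omega))$ of \cref{re.lem.Fred}. The only cosmetic difference is that the paper simply quotes the mixed reciprocity relation with its explicit constant $\cthree(d,k)$ (citing \cite{FM:85,CK:13}) instead of re-deriving or calibrating it at $v=0$.
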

\begin{proof}[Proof of \cref{re.lem.C}]
We start from the following formula, which is sometimes referred to as mixed reciprocity relation (see \cite[(4.15)]{FM:85} or 
\cite[Thm.~3.16]{CK:13}):
\begin{equation*}
  G^+_v(x,y,k) = \cthree(d,k) \frac{e^{ik|x|}}{|x|^{(d-1)/2}} \psi^+_v\left(y,-k\tfrac{x}{|x|}\right) + 
	O\left(\frac{1}{|x|^{(d+1)/2}}\right), \quad |x|\to+\infty
\end{equation*}
This implies 
\begin{equation*}
 \begin{gathered}
 (G_v(k) h)(x) = \cthree(d,k)\frac{e^{i|k||x|}}{|x|^{(d-1)/2}} (R \overline H_v^*(k) h)(x) + O\left(\frac{1}{|x|^{(d+1)/2}} \right),\quad |x| \to + \infty 
 \end{gathered}
\end{equation*}
for $h \in L^2(\partial\Omega)$,
where $(G_v(k) \varphi)(x)$ is defined in the same way as ${\mathcal G}_v \varphi(x)$ in formula \cref{in.G0} 
but with $x \in \mathbb R^d \setminus{\overline \Omega}$, and from this we obtain \eqref{re.Cvfact} in 
$L^2(\partial\Omega)$. 

Recall also that $\mathcal G_v(k) \in GL(H^{-1}(\partial\Omega),L^2(\partial\Omega))$ if \cref{in.Dir} holds (see \cref{re.lem.Fred}). This together with injectivity of $T(k)$ in $L(L^2(\partial\Omega),L^2(S^{d-1}))$ and formula \cref{re.Cvfact} imply that $H^*_v(k)$ extends by continuity to a compact injective operator with dense range in $L\bigl( H^{-1}(\partial\Omega), L^2(S^{d-1}) \bigr)$ where it satisfies \cref{re.Cvfact}.
%
\end{proof}

Using \eqref{re.stone},  \eqref{re.Cvfact}, and the identities 
$R^*R=I=RR^*$ and $R=\overline{R}$, eq.~\eqref{eq:Q_factorization} can 
be shown as follows:
\begin{align*}
 -Q(k) &= \tfrac{-1}{2i}\left({\mathcal G}_v^{-1} - \overline {\mathcal G}_v^{-1}\right) 
   = {\mathcal G}_v^{-1} \Im \mathcal G_v \overline {\mathcal G}{}^{-1}_v 
  = c_1(d,k) \mathcal ( H_v^* \overline {\mathcal G}{}^{-1}_v )^* H_v^* \overline {\mathcal G}{}^{-1}_v \\
	& = \tfrac{c_1(d,k)}{k|\cthree(d,k)|^2}T^*(k) \overline{R}\overline{R}^* T(k) = T^*(k) T(k).
\end{align*}

\textit{Part II.} The operators $\widetilde A$, $\widetilde B \in L\bigl(L^2(\partial\Omega)\bigr)$ are compact in view of \cref{re.lem.Fred} and Part I of \cref{re.thm.AtBtr}. The relations \cref{re.AtBtr.1} and \cref{re.AtBtr.2} are direct consequences of \cref{re.ABr.1}, \cref{re.ABr.2} and of definition \cref{re.AtBt}.

\textit{Part III.} The operators $\widetilde A$, $\widetilde B \in L\bigl(L^2(\partial\Omega)\bigr)$ are real, compact symmetric and commute by \cref{re.AtBtr.2}. It is well known (see e.g.\ \cite[Prop. 8.1.5]{taylor:11} that under these conditions $\widetilde A$ and $\widetilde B$ must have a common eigenbasis in $L^2(\partial\Omega)$.

Moreover, if follows from \cref{re.AtBtr.1} that if $f \in L^2(\partial\Omega)$ is a common eigenfunction of 
$\widetilde A$ and $\widetilde B$, then the corresponding eigenvalues $\lambda_{\widetilde A}$ and 
$\lambda_{\widetilde B}$ of $\widetilde A$ and $\widetilde B$, respectively, satisfy the equation \cref{re.lAlBr}.

\Cref{re.thm.AtBtr} is proved.

\section{Stability of indices of inertia}\label{sec:inertia}
Let $S$ be a compact topological manifold (in what follows it will be $S^{d-1}$ or $\partial\Omega$). Let $A\in L\bigl(L^2(S)\bigr)$ be a real symmetric operator and suppose that
\begin{equation}\label{in.A}
 \text{$L^2_{\mathbb R}(S)$ admits an orthonormal basis of eigenvectors of $A$}.
\end{equation}
We denote this basis by $\{\varphi_n \colon n \geq 1\}$, i.e.\ $A\varphi_n=\lambda_n\varphi_n$. 
Property \cref{in.A} is obviously satisfied if $A$ is also compact. 
Let us define the projections onto the sum of the non negative and negative eigenspaces by 
\begin{equation}\label{eq:PA}
P^A_{+}x:= \sum\nolimits_{n\colon \lambda_n\geq 0}\langle x,\varphi_n\rangle \varphi_n\qquad 
P^A_{-}x:= \sum\nolimits_{n\colon \lambda_n<0}\langle x,\varphi_n\rangle \varphi_n 
\end{equation}
In addition, let $L^A_-$, $L^A_+$ denote the corresponding eigenspaces:
\begin{equation}\label{eq:LA}
  L^A_- = \mathop{\mathrm{ran}} P^A_-, \quad L^A_+ = \mathop{\mathrm{ran}} P^A_+.
\end{equation}
Then it follows from $Ax =\sum_{n=1}^\infty \lambda_n\langle x,\varphi_n\rangle \varphi_n$ that 
 \begin{gather*}
  \langle A x, x \rangle = |\langle A x_+, x_+ \rangle| - |\langle A x_-, x_-\rangle|\qquad \mbox{with }x_{\pm}:=P^A_{\pm}x.
 \end{gather*}
The numbers $\mathop{\mathrm{rk}} P^A_+$ and $\mathop{\mathrm{rk}} P^A_-$ in $\mathbb{N}_0\cup\{\infty\}$ (where $\mathbb N_0$ denotes the set of non-negative integers) are called 
\emph{positive and negative index of inertia} of $A$, and the triple $\mathop{\mathrm{rk}} P^A_+,\dim\mathop{\mathrm{ker}} A, \mathop{\mathrm{rk}} P^A_-$ 
is called \emph{inertia} of $A$. A generalization of the Sylvester inertia law to Hilbert spaces 
states that for a self-adjoint operator $A\in L(X)$ on a separable Hilbert space $X$ and an operator 
$\Lambda\in GL(X)$, the inertias of $A$ and $\Lambda^*A\Lambda$ coincide (see \cite[Thm.6.1, p.234]{cain:80}).  
We are only interested in the negative index of inertia, but we also have to consider operators $\Lambda$ 
which are not necessarily surjective, but only have dense range.

\begin{lemma}\label{in.lem.in}Let $S_1$, $S_2$ be two compact topological manifolds, $(A,\widetilde A,\Lambda)$ be a triple of operators
such that $A \in L(L^2(S_1))$ and $\widetilde A \in L(L^2(S_2))$ are real, symmetric, $\Lambda \in L(L^2(S_1),L^2(S_2))$,  $\widetilde A = \Lambda A \Lambda^*$ and 
$A$, $\widetilde A$ satisfy \cref{in.A}. Then $\mathop{\mathrm{rk}} P^{\widetilde A}_- \leq \mathop{\mathrm{rk}} P^A_-$. 
Moreover, if $\mathop{\mathrm{rk}} P^A_- < \infty$ and $\Lambda$ is injective, then $\mathop{\mathrm{rk}} P^{\widetilde A}_- = \mathop{\mathrm{rk}} P^A_-$.
\end{lemma}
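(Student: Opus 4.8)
The plan is to deduce both claims from an elementary variational description of the negative index of inertia. The first step is to record that for any self-adjoint $C\in L(L^2(S))$ satisfying \cref{in.A},
\[
  \mathop{\mathrm{rk}} P^C_- \;=\; \sup\bigl\{\dim V \colon V\subseteq L^2(S)\text{ a subspace with }\langle Cx,x\rangle<0 \text{ for all } x\in V\setminus\{0\}\bigr\}
\]
in $\mathbb N_0\cup\{\infty\}$, the supremum being attained at $V=L^C_-$ whenever it is finite. This is immediate from the eigenbasis $\{\varphi_n\}$: on $L^C_-$ one has $\langle Cx,x\rangle=\sum_{n\colon\lambda_n<0}\lambda_n|\langle x,\varphi_n\rangle|^2<0$ for $x\neq0$, giving ``$\geq$''; on the closed subspace $L^C_+=\mathop{\mathrm{ran}} P^C_+$ one has $\langle Cx,x\rangle\geq0$, so any subspace on which the form of $C$ is negative definite meets $L^C_+$ only in $\{0\}$ and hence has dimension at most $\mathop{\mathrm{codim}} L^C_+=\mathop{\mathrm{rk}} P^C_-$, giving ``$\leq$''. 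I would carry this out in the complex spaces $L^2(S;\mathbb C)$, using only self-adjointness of $C$ (which keeps $\langle Cx,x\rangle$ real) and \cref{in.A}; this matters because $\Lambda$ is not assumed to commute with complex conjugation.

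Granting this, the inequality $\mathop{\mathrm{rk}} P^{\widetilde A}_-\leq\mathop{\mathrm{rk}} P^A_-$ follows at once. Given any subspace $V\subseteq L^2(S_2)$ with $\langle\widetilde A y,y\rangle<0$ on $V\setminus\{0\}$, the identity $\widetilde A=\Lambda A\Lambda^*$ yields $\langle A\Lambda^*y,\Lambda^*y\rangle=\langle\widetilde A y,y\rangle<0$ there; in particular $\Lambda^*$ is injective on $V$, so $W:=\Lambda^*V$ has $\dim W=\dim V$ and the form of $A$ is negative definite on $W$. By the characterization applied to $A$, $\dim V=\dim W\leq\mathop{\mathrm{rk}} P^A_-$, and taking the supremum over all such $V$ gives the inequality.

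For the equality when $m:=\mathop{\mathrm{rk}} P^A_-<\infty$ and $\Lambda$ is injective, the key observation is that injectivity of $\Lambda$ gives $\overline{\mathop{\mathrm{ran}}\Lambda^*}=(\ker\Lambda)^\perp=L^2(S_1)$, i.e.\ $\Lambda^*$ has dense range. Let $\varphi_1,\dots,\varphi_m$ be the eigenvectors spanning $L^A_-$ and let $-\epsilon<0$ be the largest eigenvalue of the self-adjoint operator $A|_{L^A_-}$ on the $m$-dimensional space $L^A_-$, so $\langle Ax,x\rangle\leq-\epsilon\|x\|^2$ on $L^A_-$. Using density of $\mathop{\mathrm{ran}}\Lambda^*$, pick $y_i\in L^2(S_2)$ with $\|\Lambda^*y_i-\varphi_i\|<\delta$. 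For $y=\sum_i c_i y_i$ one writes $\Lambda^*y=x_0+r$ with $x_0=\sum_i c_i\varphi_i\in L^A_-$, $\|x_0\|=\|c\|$ and $\|r\|\leq\delta\sqrt{m}\,\|c\|$, and then
\[
  \langle\widetilde A y,y\rangle=\langle A\Lambda^*y,\Lambda^*y\rangle=\langle Ax_0,x_0\rangle+2\,\Re\langle Ax_0,r\rangle+\langle Ar,r\rangle\leq\bigl(-\epsilon+2\|A\|\sqrt{m}\,\delta+\|A\|m\delta^2\bigr)\|c\|^2.
\]
Choosing $\delta$ small makes the bracket negative, so $\langle\widetilde A y,y\rangle<0$ whenever $c\neq0$; in particular $\Lambda^*y_1,\dots,\Lambda^*y_m$, hence $y_1,\dots,y_m$, are linearly independent, and $W:=\mathop{\mathrm{span}}\{y_1,\dots,y_m\}$ is an $m$-dimensional subspace on which the form of $\widetilde A$ is negative definite. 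By the characterization applied to $\widetilde A$, $\mathop{\mathrm{rk}} P^{\widetilde A}_-\geq m$, which together with the first part gives equality.

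I expect the last step to be the main obstacle: the naive candidate $W=\Lambda L^A_-$ fails because $\langle\widetilde A\Lambda x,\Lambda x\rangle=\langle A\Lambda^*\Lambda x,\Lambda^*\Lambda x\rangle$ involves $\Lambda^*\Lambda x$ rather than $x$, with no algebraic substitution bringing it back to $\langle Ax,x\rangle$. One must therefore trade the surjectivity hypothesis of the classical Sylvester inertia law for density of $\mathop{\mathrm{ran}}\Lambda^*$ and argue by the perturbation above, where finiteness of $m$ is exactly what lets a single $\delta$ work uniformly on the $m$-dimensional test space.
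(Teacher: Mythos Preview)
Your proof is correct and follows essentially the same strategy as the paper's. Both arguments use the identity $\langle\widetilde A x,x\rangle=\langle A\Lambda^*x,\Lambda^*x\rangle$ for the inequality and, for the reverse inequality, approximate an orthonormal basis of $L^A_-$ by vectors in $\mathop{\mathrm{ran}}\Lambda^*$ (exploiting that $\Lambda$ injective $\Rightarrow$ $\Lambda^*$ has dense range), then estimate the quadratic form. The only cosmetic difference is packaging: you phrase everything through the variational characterization of $\mathop{\mathrm{rk}} P^C_-$ as the maximal dimension of a negative subspace, whereas the paper instead exhibits injective linear maps $L^{\widetilde A}_-\to L^A_-$, $x\mapsto P^A_-\Lambda^*x$, and (after approximation) $L^A_-\to L^{\widetilde A}_-$, $y_j\mapsto P^{\widetilde A}_-x_j$; the underlying estimates are the same.
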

\begin{proof} 
For each $x \in L^{\widetilde A}_- \setminus \{0\}$ we have
 \begin{align*}
  0 & > \langle \widetilde A x, x \rangle = \langle A \Lambda^* x, \Lambda^* x \rangle \\
   &= \bigl|\langle A P^A_+ \Lambda^* x, P^A_+ \Lambda^* x \rangle \bigr| - \bigl| \langle A P^A_- \Lambda^* x, P^A_- \Lambda^* x  \rangle \bigr| \\
  &\geq - \bigl| \langle A P^A_- \Lambda^* x, P^A_- \Lambda^* x \rangle \bigr|,
 \end{align*}
which shows that $P^A_- \Lambda^* x \neq 0$. Hence, the linear mapping $L^{\widetilde A}_- \to L^A_-$, 
$x \mapsto P^A_- \Lambda^* x$, is injective. This shows that $\mathop{\mathrm{rk}} P^{\widetilde A}_- \leq \mathop{\mathrm{rk}} P^A_-$.

Now suppose that $d = \mathop{\mathrm{rk}} P^A_- < \infty$ and that $\Lambda$ is injective. Note that the injectivity of $\Lambda$ implies that $\Lambda^*$ has dense range (see, e.g., \cite[Thm. 4.6]{CK:13}). Let $y_1$, \dots, $y_d$ be an orthonormal basis of $L^A_-$ with $A y_j = \lambda_j y_j$. Let $l_\text{min} = \min\{|\lambda_1|,\dots,|\lambda_d|\}$ and let $x_1$, \dots, $x_d$ be such that 
\begin{equation*}
 \| y_j - \Lambda^* x_j \|_2 < \varepsilon, \quad 5 \varepsilon \sqrt d \|A\| < l_\text{min}, \quad j=1,\dots,d.
\end{equation*}
Let $\alpha_1$, \dots, $\alpha_d \in \mathbb R$ be such that $\sum_j |\alpha_j|^2 = 1$ and put $x =\sum_j \alpha_j x_j$, $y = \sum_j \alpha_j y_j$. Note that
\begin{gather*}
 \| y - \Lambda^* x\|_2 \leq \varepsilon \sum\nolimits_j |\alpha_j| \leq \varepsilon \sqrt d, \\
 \langle Ay, y \rangle \geq \langle \widetilde Ax,x \rangle - 5 \varepsilon \sqrt d \|A\|  > \langle \widetilde Ax,x\rangle - l_\text{min}.
\end{gather*}   
Then we have
\begin{equation*}
 \begin{aligned}
  -l_\text{min} & \geq \langle A y, y \rangle > \langle \widetilde A x, x \rangle - l_\text{min}  \\
    & = \bigl| \langle \widetilde A P^{\widetilde A}_+ x, P^{\widetilde A}_+ x \rangle  \bigl| - \bigl| \langle \widetilde A P^{\widetilde A}_- x, P^{\widetilde A}_- x \rangle \bigr| - l_\text{min}  \\
    & \geq - \bigl| \langle \widetilde A P^{\widetilde A}_- x, P^{\widetilde A}_- x \rangle \bigr| - l_\text{min}, \quad j = 1,\dots,d.
 \end{aligned}
\end{equation*}
Hence, $P^{\widetilde A}_- x \neq 0$. Thus, the linear mapping $L^A_- \to L^{\widetilde A}_-$, defined on the basis by $y_j \to P^{\widetilde A}_- x_j$, $j = 1$, \dots, $d$, is injective and $\mathop{\mathrm{rk}} P^A_- \leq \mathop{\mathrm{rk}} P^{\widetilde A}_-$.
\end{proof}

The assumption \cref{in.A} in \cref{in.lem.in} can be dropped, but then the operators 
$P^{\widetilde A}_\pm$, $P^A_\pm$ must be defined using the general spectral theorem for self-adjoint operators.

The following two lemmas address the stability of the negative index of inertia under perturbations. 
We first look at perturbations of finite rank. 

\begin{lemma}\label{in.lem.add} Let $S$ be a compact topological manifold, let $A_1$, $A_2$ be compact self-adjoint operators in $L^2(S)$ 
and set $n_j:=\mathop{\mathrm{rk}} P^{A_j}_-$ for $j=1,2$. If $n_1 < \infty$ and $\mathop{\mathrm{rk}}(A_1 - A_2) < \infty$, 
then $n_2 \leq n_1 + \mathop{\mathrm{rk}}(A_1-A_2)$.
\end{lemma}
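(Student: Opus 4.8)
The plan is to compare the quadratic forms of $A_1$ and $A_2$ on a suitable subspace and apply a min-max / codimension-counting argument. First I would set $F := \operatorname{ran}(A_1 - A_2)$, so that $\dim F = \operatorname{rk}(A_1 - A_2) =: r < \infty$, and note that $A_1 x = A_2 x$ for every $x \in F^\perp$ (since $A_1 - A_2$ vanishes on $(\operatorname{ran}(A_1-A_2))^\perp = \ker(A_1-A_2)^* = \ker(A_1-A_2)$, using that $A_1 - A_2$ is self-adjoint). Hence for $x \in F^\perp$ we have $\langle A_2 x, x\rangle = \langle A_1 x, x\rangle$.

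Next I would argue by contradiction: suppose $n_2 > n_1 + r$. Using assumption \cref{in.A} for $A_2$ (valid since $A_2$ is compact), the negative eigenspace $L^{A_2}_-$ has dimension $n_2$. Consider the subspace $V := L^{A_2}_- \cap F^\perp$; by the dimension formula $\dim V \geq n_2 - r > n_1$. On $V \setminus\{0\}$ we have, for $x \in V$, that $\langle A_1 x, x\rangle = \langle A_2 x, x\rangle < 0$ because $x \in L^{A_2}_-\setminus\{0\}$ and $A_2$ restricted to its negative eigenspace is negative definite (here I use that $L^{A_2}_-$ is spanned by eigenvectors with strictly negative eigenvalues, so $\langle A_2 x, x\rangle \le \lambda_{\max}^- \|x\|^2 < 0$ for $x\ne 0$). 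Thus $V$ is a subspace of dimension exceeding $n_1$ on which the form $x \mapsto \langle A_1 x, x\rangle$ is negative definite.

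Finally, this contradicts the definition of $n_1$: since $L^{A_1}_+ = \operatorname{ran} P^{A_1}_+$ has codimension $n_1$ and $\langle A_1 x, x\rangle \geq 0$ for all $x \in L^{A_1}_+$, any subspace on which $\langle A_1\cdot,\cdot\rangle$ is negative definite must intersect $L^{A_1}_+$ only in $\{0\}$, hence has dimension at most $n_1$. Applying this with the subspace $V$ (dimension $> n_1$) gives the contradiction, so $n_2 \leq n_1 + r$, which is the claim.

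The step I expect to require the most care is the clean handling of the non-strict inequality in the definition of $P^{A_1}_+$ (nonnegative, not positive, eigenvalues): a subspace on which $\langle A_1 \cdot,\cdot\rangle < 0$ strictly can still fail to be complementary to $L^{A_1}_+$ in general, but it does intersect $L^{A_1}_+$ trivially, and that triviality of intersection is exactly what bounds its dimension by $\operatorname{codim} L^{A_1}_+ = n_1$. I would make sure to phrase the codimension count via the injectivity of the projection $P^{A_1}_-$ restricted to $V$ (as in the proof of \cref{in.lem.in}), which is the most robust way to present it and avoids any subtlety about closed complements in infinite dimensions.
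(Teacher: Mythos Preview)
Your proof is correct. Both your argument and the paper's rest on the same key observation, namely that $A_1=A_2$ on the codimension-$r$ subspace $F^\perp=\ker(A_1-A_2)$, but they exploit it differently. The paper proceeds via the min--max characterization of the negative eigenvalues: setting $k=n_1+1$ it shows
\[
0=\sup_{S_{k-1}}\min_{x\in S_{k-1}^\perp,\ \|x\|=1}\langle A_1x,x\rangle
\ \le\ \sup_{S_{k-1+r}}\min_{x\in S_{k-1+r}^\perp,\ \|x\|=1}\langle A_2x,x\rangle,
\]
by restricting the inner minimum to $F^\perp$, and reads off $n_2\le n_1+r$ directly. Your argument instead intersects $L^{A_2}_-$ with $F^\perp$ and uses the injectivity of $P^{A_1}_-$ on that intersection, exactly in the style of the paper's proof of the preceding lemma on inertia under congruence. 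Your route is arguably a bit more elementary (no min--max formula needed) and meshes nicely with the surrounding arguments; the paper's route has the advantage of giving slightly more, namely an eigenvalue comparison rather than just a rank bound.

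One small caveat: the parenthetical justification ``$\langle A_2x,x\rangle\le\lambda_{\max}^-\|x\|^2<0$'' is not valid when $n_2=\infty$, since the negative eigenvalues then accumulate at $0$ and there is no such uniform gap. The statement you actually need, $\langle A_2x,x\rangle<0$ for every nonzero $x\in L^{A_2}_-$, follows directly from the eigen\-expansion $\langle A_2x,x\rangle=\sum_{\lambda_n<0}\lambda_n|\langle x,\varphi_n\rangle|^2$, so the argument goes through unchanged once you drop that parenthetical.
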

\begin{proof} Let $\lambda^{A_j}_1 \leq \lambda^{A_j}_2 \leq \cdots <0$ denote the negative eigenvalues of $A_j$ in 
$L^2(S)$, sorted in ascending order with multiplicities. By the min-max principle we have that
\begin{align*}
& \max_{S_{k-1}} \min_{x \in S_{k-1}^\bot, \|x\|=1} \langle A_j x, x\rangle = \lambda^{A_j}_k, \quad 1 \leq k \leq n_j, \\
  & \sup_{S_{k-1}} \min_{x \in S_{k-1}^\bot, \|x\|=1} \langle A_j x, x\rangle = 0, \quad k > n_j
\end{align*}
where the maximum is taken over all $(k-1)$-dimensional subspaces $S_{k-1}$ of $L^2(S)$ and $S^\bot$ denotes the orthogonal complement of $S_{k-1}$ in $L^2(S)$. Let $K = A_1-A_2$, $r = \mathop{\mathrm{rk}} K$ and note that $A_1x = A_2x$ for any $x \in \mathop{\mathrm{ker}} K = (\mathop{\mathrm{ran}} K)^\bot$. Also note that $(S_{k-1}\oplus \mathop{\mathrm{ker}} K)^\bot \subset \bigl(S_{k-1}^\bot \cap \overline{\mathop{\mathrm{ran}} K}\bigr)$. For $k = n_1 + 1$ we obtain 
\begin{align*}
  0 & = \sup_{S_{k-1}} \min_{x \in S_{k-1}^\bot, \|x\|=1} \langle A_1 x, x\rangle \\
    & \leq \sup_{S_{k-1}} \min \bigl\{ \langle A_2 x, x \rangle \colon x \in (S_{k-1}\oplus \mathop{\mathrm{ker}} K\bigr)^\bot, \|x\|=1  \bigr\} \\
    & \leq \sup_{S_{k-1+r}} \min_{x \in S_{k-1+r},\|x\|=1} \langle A_2 x,x \rangle 
		=\begin{cases}
		\lambda^{A_2}_{k+r} &\mbox{if }n_2\geq k+r,\\
		0 &\mbox{else}.
		\end{cases}
\end{align*}
Taking into account that $\lambda^{A_2}_{k+r}<0$, we obtain that only the second case is possible, and this implies $n_2 \leq k -1 +r$.
\end{proof}

In the next lemma we look at ``small'' perturbations. The analysis is complicated by the fact 
that we have to deal with operators with eigenvalues tending to $0$. 
Moreover, we not only have to show stability of $\mathop{\mathrm{rk}} P^A_-$ but also of 
$L^A_-$.

\begin{lemma}\label{in.lem.AB} Let $S_1$ be a $C^1$ compact manifold and $S_2$ a topological compact manifold. Let $(A,A_0,\Lambda)$ be a triple of operators such that $A, A_0 \in L\left(L^2(S_1)\right)$ are real, symmetric, satisfying \cref{in.A}; $\Lambda \in L\left(L^2(S_1),L^2(S_2)\right)$ is injective and
\begin{align*}
  & A_0 \in GL\left(H^{-\frac 1 2}(S_1),H^{\frac 1 2}(S_1)\right) 
	\cap GL\left(L^2(S_1),H^1(S_1)\right), \quad \mathop{\mathrm{rk}} P^{A_0}_- < \infty,\\
  &A - A_0 \in K\left(H^{-\frac 1 2}(S_1),H^{\frac 1 2}(S_1)\right)
\end{align*}
Put $\widetilde A = \Lambda A \Lambda^*$, $\widetilde A_0 = \Lambda A_0 \Lambda^*$. The following statements hold true:
\begin{enumerate}
 \item $\mathop{\mathrm{rk}} P^A_- < \infty$.
 \item For any $\sigma>0$ there exists $\delta = \delta(A_0,\Lambda,\sigma)$ such that if $\|A-A_0\| < \delta$ in 
$L\bigl(H^{-\frac 1 2}(S_1),H^{\frac 1 2}(S_1)\bigr)$, then
\begin{enumerate}
  \item $\mathop{\mathrm{rk}} P^{\widetilde A}_- = \mathop{\mathrm{rk}} P^{\widetilde A_0}_-$,
  \item $A$ is injective in $H^{-\frac 1 2}(S_1)$,
  \item if $\widetilde Af = {\widetilde \lambda} f$ for some $f \in L^2(S_2)$ with $\|f\|_2 = 1$, then $\lambda < 0$ if and only if $\mathop{\mathrm{d}}(f,L^{\widetilde A_0}_-) < \tfrac 1 2$,
  \item all negative eigenvalues of $\widetilde A$ in $L^2(S^{d-1})$ belong to the $\sigma$-neighborhood of negative eigenvalues of $\widetilde A_0$.
\end{enumerate}
\end{enumerate}
\end{lemma}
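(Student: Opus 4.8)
The plan is to reduce the whole statement to elementary quadratic-form estimates on the two \emph{fixed} subspaces $L^{A_0}_-$ and $L^{A_0}_+$ of $L^2(S_1)$, the crucial point being that the hypothesis $A_0\in GL(H^{-1/2}(S_1),H^{1/2}(S_1))$ provides a coercivity estimate in the $H^{-1/2}(S_1)$-norm, and then to transport the negative index of inertia to the $S_2$-side by \cref{in.lem.in}. First I would note that, since $A_0$ maps $H^{-1/2}(S_1)$ boundedly into $H^{1/2}(S_1)$, $A-A_0$ is compact between these spaces, and $H^{1/2}(S_1)\hookrightarrow L^2(S_1)\hookrightarrow H^{-1/2}(S_1)$ compactly, the operators $A_0$ and $A$ are compact in $L^2(S_1)$; hence $\widetilde A_0$ and $\widetilde A$ are compact in $L^2(S_2)$, and all four operators satisfy \cref{in.A}. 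Let $P:=P^{A_0}_-$, of rank $N:=\mathop{\mathrm{rk}}P^{A_0}_-<\infty$; being a finite-rank spectral projection of $A_0$ it commutes with $A_0$ and is bounded on $H^{\pm1/2}(S_1)$, and (as $A_0$ is injective in $L^2(S_1)$) $L^{A_0}_+=\mathop{\mathrm{ran}}(\mathrm{Id}-P)$. Put $J:=\mathrm{Id}-2P$ and $|A_0|:=A_0J=A_0-2A_0P$. Since $J$ is an isomorphism of $H^{-1/2}(S_1)$ we get $|A_0|\in GL(H^{-1/2}(S_1),H^{1/2}(S_1))$; moreover $|A_0|$ is self-adjoint and $\langle|A_0|y,y\rangle\ge0$ for $y\in L^2(S_1)$, hence for $y\in H^{-1/2}(S_1)$. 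As $(y,z)\mapsto\langle|A_0|y,z\rangle$ is then a bounded nonnegative symmetric form on $H^{-1/2}(S_1)$ whose associated operator $|A_0|\colon H^{-1/2}(S_1)\to H^{1/2}(S_1)=(H^{-1/2}(S_1))^*$ is an isomorphism, a Cauchy--Schwarz argument for this form (combined with the invertibility lower bound) gives a constant $c_0=c_0(A_0)>0$ with $\langle|A_0|y,y\rangle\ge c_0\|y\|^2_{H^{-1/2}(S_1)}$ for all $y$. Since $Py=0$ on $L^{A_0}_+$, this yields
\begin{equation*}
\langle A_0y,y\rangle=\langle|A_0|y,y\rangle\ge c_0\,\|y\|^2_{H^{-1/2}(S_1)},\qquad y\in L^{A_0}_+ .
\end{equation*}

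To establish (1), I would write $A=|A_0|+2A_0P+(A-A_0)$ and, using compactness of $A-A_0$ in $L(H^{-1/2}(S_1),H^{1/2}(S_1))$, split $A-A_0=F_1+K_1$ with $F_1$ of finite rank and $\|K_1\|<c_0/2$ in that space. The subspace $W:=\mathop{\mathrm{ran}}(\mathrm{Id}-P)\cap\ker F_1$ then has finite codimension in $L^2(S_1)$, and for $y\in W\setminus\{0\}$ one has $Py=0$, $F_1y=0$, so $\langle Ay,y\rangle=\langle|A_0|y,y\rangle+\langle K_1y,y\rangle\ge(c_0/2)\|y\|^2_{H^{-1/2}(S_1)}>0$; hence $L^A_-\cap W=\{0\}$ and $\mathop{\mathrm{rk}}P^A_-\le\mathop{\mathrm{codim}}W<\infty$. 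For (2), I would assume $\|A-A_0\|<\delta$ in $L(H^{-1/2}(S_1),H^{1/2}(S_1))$. If $\delta<\|A_0^{-1}\|^{-1}$, then $A=A_0(\mathrm{Id}+A_0^{-1}(A-A_0))$ is an isomorphism from $H^{-1/2}(S_1)$ onto $H^{1/2}(S_1)$, which is (b). If also $\delta<c_0$, then $\langle Ay,y\rangle\ge(c_0-\delta)\|y\|^2_{H^{-1/2}(S_1)}>0$ on $L^{A_0}_+$, so $\mathop{\mathrm{rk}}P^A_-\le N$; and, with $\lambda_{\max}<0$ the largest negative eigenvalue of $A_0$, one has $\langle Ay,y\rangle\le\lambda_{\max}\|y\|^2_{L^2(S_1)}+\delta\|y\|^2_{H^{-1/2}(S_1)}$, which is $<0$ on the $N$-dimensional space $L^{A_0}_-$ for $\delta$ small enough (since $\lambda_{\max}<0$ and the $H^{-1/2}(S_1)$-norm is bounded by a multiple of the $L^2(S_1)$-norm), so $\mathop{\mathrm{rk}}P^A_-\ge N$. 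Thus $\mathop{\mathrm{rk}}P^A_-=N$, and \cref{in.lem.in}, applied with the injective $\Lambda$ and using $\mathop{\mathrm{rk}}P^A_-<\infty$, gives $\mathop{\mathrm{rk}}P^{\widetilde A}_-=N=\mathop{\mathrm{rk}}P^{\widetilde A_0}_-$, which is (a).

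For (c) and (d) (the case $N=0$ being trivial, so assume $N\ge1$) I would work with $\widetilde A=\widetilde A_0+\Lambda(A-A_0)\Lambda^*$ on $L^2(S_2)$, for which $\|\widetilde A-\widetilde A_0\|_{L(L^2(S_2))}\le\|\Lambda\|^2\delta$. Let $-\epsilon_0<0$ be the largest negative eigenvalue of the compact operator $\widetilde A_0$, so that $\epsilon_0=\epsilon_0(A_0,\Lambda)>0$ and $\widetilde A_0$ has no eigenvalue in $(-\epsilon_0,0)$, and fix a contour $\Gamma$ in $\mathbb C$ separating the finitely many negative eigenvalues of $\widetilde A_0$ from the rest of its spectrum. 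For $\delta$ small enough the associated Riesz projection $P_\Gamma$ of $\widetilde A$ is well defined, has rank $N$, and $\|P_\Gamma-P^{\widetilde A_0}_-\|\le C(A_0,\Lambda)\,\delta$; by part (a) the rank-$N$ projection $P_\Gamma$ must coincide with $P^{\widetilde A}_-$, whence $\widetilde A$ has no eigenvalue in $(-\epsilon_0/2,0)$, and each of its $N$ negative eigenvalues lies within $\|\Lambda\|^2\delta$ of a negative eigenvalue of $\widetilde A_0$. This gives (d) once $\|\Lambda\|^2\delta<\sigma$. For (c): if $\widetilde Af=\widetilde\lambda f$ with $\|f\|_2=1$, then $P^{\widetilde A}_-f=f$ if $\widetilde\lambda<0$ and $P^{\widetilde A}_-f=0$ if $\widetilde\lambda\ge0$; since $\mathop{\mathrm{d}}(f,L^{\widetilde A_0}_-)=\|(\mathrm{Id}-P^{\widetilde A_0}_-)f\|$, the first case gives $\mathop{\mathrm{d}}(f,L^{\widetilde A_0}_-)=\|(P^{\widetilde A}_--P^{\widetilde A_0}_-)f\|$ while the second gives $\mathop{\mathrm{d}}(f,L^{\widetilde A_0}_-)^2=1-\|(P^{\widetilde A_0}_--P^{\widetilde A}_-)f\|^2$, so both directions of (c) follow once $\delta$ is small enough that $\|P^{\widetilde A}_--P^{\widetilde A_0}_-\|<\tfrac12$. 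Choosing $\delta=\delta(A_0,\Lambda,\sigma)$ below all the thresholds above completes the proof.

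I expect the main difficulty to be part (a): an $L(L^2)$-small perturbation of the compact operator $A_0$, whose positive eigenvalues accumulate at $0$, could a priori create new negative eigenvalues arbitrarily close to $0$ and thereby increase $\mathop{\mathrm{rk}}P^A_-$. This is exactly what the $H^{-1/2}(S_1)$-coercivity of $|A_0|$ prevents: it controls $\langle Ay,y\rangle$ from below on the \emph{fixed} codimension-$N$ subspace $L^{A_0}_+$ in terms of $\|y\|^2_{H^{-1/2}(S_1)}$, and this lower bound is stable under perturbations small in $L(H^{-1/2}(S_1),H^{1/2}(S_1))$. The remaining ingredients — the elementary bound on $L^{A_0}_-$, the Neumann series for (b), the transport lemma \cref{in.lem.in}, and the finite-rank spectral perturbation theory behind (c)–(d) — are then routine.
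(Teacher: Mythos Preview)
Your proof is correct and follows a genuinely different route from the paper's. The paper factorizes $A=|A_0|^{1/2}D\,|A_0|^{1/2}$ with $D=\mathrm{Id}+R+|A_0|^{-1/2}(A-A_0)|A_0|^{-1/2}$, where $R$ has finite rank and the last term is shown to be compact (resp.\ small) in $L^2(S_1)$ via complex interpolation between $L^2(S_1)$ and $H^1(S_1)$; the inertia of $A$ is then read off from that of $D$, whose spectrum clusters at $1$, through \cref{in.lem.in} applied to $(D,A,|A_0|^{1/2})$. You instead extract directly the $H^{-1/2}$--coercivity $\langle |A_0|y,y\rangle\ge c_0\|y\|_{H^{-1/2}}^2$ from the hypothesis $|A_0|\in GL(H^{-1/2},H^{1/2})$ together with nonnegativity, and run all estimates as quadratic-form bounds on the \emph{fixed} subspaces $L^{A_0}_\pm$. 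Your approach is more elementary in that it avoids fractional powers and interpolation; the paper's square-root reduction, on the other hand, packages everything into a perturbation of $\mathrm{Id}+R$ on $L^2$, which makes the spectral picture (eigenvalues near $\pm1$) particularly transparent. For (c) and (d) both arguments are essentially the same finite-rank Riesz-projection perturbation argument (the paper quotes Kato), combined with the key input from (a) that $\mathop{\mathrm{rk}}P^{\widetilde A}_-$ cannot exceed $\mathop{\mathrm{rk}}P^{\widetilde A_0}_-$. One minor point: your bound $\|\widetilde A-\widetilde A_0\|_{L(L^2(S_2))}\le\|\Lambda\|^2\delta$ should carry an extra factor from the embeddings $L^2(S_1)\hookrightarrow H^{-1/2}(S_1)$ and $H^{1/2}(S_1)\hookrightarrow L^2(S_1)$, but this only affects constants, not the argument.
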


\begin{proof} \textit{First part.} We have that
\begin{equation*}
  A  = |A_0|^{\frac 1 2} \bigl( \mathrm{Id} + R + |A_0|^{-\frac 1 2}(A - A_0)|A_0|^{-\frac 1 2} \bigr)|A_0|^{\frac 1 2},
\end{equation*}
with $R$ finite rank compact operator in $L^2(S_1)$. More precisely, starting from the orthonormal eigendecomposition of $A_0$ in $L^2(S_1)$,
\begin{equation*}
  A_0 f = \sum\nolimits_{n=1}^\infty \lambda_n \langle f, \varphi_n \rangle \varphi_n,
\end{equation*}
we define $|A_0|^\alpha$ for $\alpha\in{\mathbb R}$ and $R$ as follows:
\begin{gather*}
  |A_0|^\alpha f = \sum\nolimits_{n=1}^\infty |\lambda_n|^\alpha \langle f, \varphi_n \rangle \varphi_n, \quad Rf = -2\sum\nolimits_{n:\lambda_n < 0} \langle f,\varphi_n\rangle \varphi_n.
\end{gather*}

By our assumptions and the polar decomposition, $|A_0|^{-1}$ is a symmetric operator on $L^2(S_1)$ with  domain $H^1(S_1)$, and $|A_0|^{-1}\in L(H^1(S_1),L^2(S_1))$. 
Consequently, by complex interpolation we get
\begin{equation*}
  |A_0|^{-\frac 1 2} \in L\left(H^{\frac 1 2}(S_1),L^2(S_1)\right).
\end{equation*}
In a similar way, we obtain
\begin{gather*}
  |A_0|^{-\frac 1 2} \in L\left(L^2(S_1),H^{-\frac 1 2}(S_1)\right), \qquad 
  |A_0|^{\frac 1 2} \in L\left( L^2(S_1), H^{\frac 1 2}(S_1) \right).
\end{gather*}
Thus, the operator $|A_0|^{-\frac 1 2}(A-A_0)|A_0|^{-\frac 1 2}$ is compact in $L^2(S_1)$. Hence, its eigenvalues converge to zero. Let us introduce the operators $D$, $D_0$ and $\Delta D$ by 
\begin{equation*}
  D := D_0 + \Delta D, \quad D_0 := \mathrm{Id} + R, \quad \Delta D := |A_0|^{-\frac 1 2}(A - A_0)|A_0|^{-\frac 1 2}, 
\end{equation*}
Then the eigenvalues of $D$
converge to $1$, and only finite number of eigenvalues of $D$ in $L^2(S_1)$ can be negative. Applying \cref{in.lem.in} to the triple $(D,A,|A_0|^{\frac 1 2})$, we get the first statement of the present lemma.

\textit{Second part.}
At first, we show that there exists $\delta'$ such that if $\|A-A_0\| < \delta'$, then $\mathop{\mathrm{rk}} P^A_- \leq \mathop{\mathrm{rk}} P^{A_0}_-$. Here the norm is taken in $L\bigl( H^{-\frac 1 2}(S_1), H^{\frac 1 2}(S_1) \bigr)$. Note that the spectrum of $D_0$ in $L^2(S_1)$ consists at most of the two points $-1$ and $1$. 
Thus, the spectrum $\sigma^D$ of $D$ satisfies
\begin{equation*}
 \sigma^D \subseteq [-1-\|\Delta D\|,-1+\|\Delta D\|] \cup [1-\|\Delta D\|,1+\|\Delta D\|],
\end{equation*}
where $\|\Delta D\|$ is the norm of $\Delta D$ in $L\bigl(L^2(S_1)\bigr)$. It follows that if $x \in L^D_-$, $\|x\|_2 = 1$, then
\begin{equation*}
  \langle Dx,x \rangle \leq -1 + \|\Delta D\|. 
\end{equation*}
On the other hand,
\begin{equation*}
\begin{gathered}
  \langle Dx,x\rangle \geq \langle D_0x,x\rangle - \|\Delta D\| 
  \geq -\bigl|\langle D_0 x_-, x_- \rangle \bigr| - \|\Delta D\|
\end{gathered}
\end{equation*}
for $x_- = P^{D_0}_- x$. It follows from the last two inequalities that
\begin{equation*}
  \bigl| \langle D_0 x_-, x_-\rangle \bigr| \geq 1 - 2 \|\Delta D\|.
\end{equation*}
Thus, if $\|\Delta D\| < \frac 1 2$, the mapping $L^D_- \to L^{D_0}_-$, $x \to P^{D_0}_-x$ is injective, so $\mathop{\mathrm{rk}} P^D_- \leq \mathop{\mathrm{rk}} P^{D_0}_-$. Using \cref{in.lem.in} to the triple $(D,A,|A_0|^{\frac 1 2})$ and taking into account that $\mathop{\mathrm{rk}} P^{D_0}_- = \mathop{\mathrm{rk}} P^{A_0}_-$ we also get that $\mathop{\mathrm{rk}} P^A_- \leq \mathop{\mathrm{rk}} P^{A_0}_-$. 
Moreover, there exists $\delta' = \delta'(A_0,\Lambda)$ such that if $\|A-A_0\| < \delta'$ in the norm of 
$L\bigl(H^{-\frac 1 2}(S_1),H^{\frac 1 2}(S_1)\bigr)$, then $\|\Delta D\| < \tfrac 1 2$ 
and consequently, $\mathop{\mathrm{rk}} P^A_- \leq \mathop{\mathrm{rk}} P^{A_0}_-$. In addition, taking into account that $|A_0|^{-\frac 1 2} \in GL\bigl(L^2(S_1),H^{-\frac 1 2}(S_1)\bigr)$, we obtain that $A$ is injective in $H^{-\frac 1 2}(S_1)$ if $\|A-A_0\|<\delta'$.

Applying \cref{in.lem.in} to the triple $(A,\widetilde A,\Lambda)$ and using the assumption that $\Lambda$ is injective, we obtain that $\mathop{\mathrm{rk}} P^{\widetilde A}_- \leq \mathop{\mathrm{rk}} P^{\widetilde A_0}_-$ if $\|A-A_0\|<\delta'$ in $L\bigl(H^{-\frac 1 2}(S_1),H^{\frac 1 2}(S_1)\bigr)$.

Now let $\Sigma$ be the union of circles of radius $\sigma>0$ in $\mathbb C$ centered at negative eigenvalues of $\widetilde A_0$ in $L^2(S_2)$. It follows from \cite[Thm.3.16 p.212]{kato:80} that there exists $\delta'' = \delta''(A_0,\Lambda,\sigma)$, $\delta'' < \delta'$, such that if $\|A-A_0\| < \delta''$, then $\Sigma$ also encloses $\mathop{\mathrm{rk}} P^{\widetilde A_0}_-$ negative eigenvalues of $\widetilde A$.

Taking into account that $\mathop{\mathrm{rk}} P^{\widetilde A}_- \leq \mathop{\mathrm{rk}} P^{\widetilde A_0}_-$ if $\|A-A_0\| < \delta''$, we get that $\mathop{\mathrm{rk}} P^{\widetilde A}_- = \mathop{\mathrm{rk}} P^{\widetilde A_0}_-$. In addition, it follows from \cite[Thm.3.16 p.212]{kato:80} that there exists $\delta'''=\delta'''(A_0,\Lambda,\sigma)$, $\delta''' < \delta''$, such that if $\|A-A_0\|<\delta'''$, then $\|P^{\widetilde A}_- - P^{\widetilde A_0}_-\| < \tfrac 1 2$.

The second statement now follows from the following standard fact:
\begin{lemma} The following inequalities are valid:
\begin{align*}
&\mathop{\mathrm{d}}(f,L^{\widetilde A_0}_-) \leq \|P^{\widetilde A}_--P^{\widetilde A_0}_-\|
&&\mbox{for all }f \in L^{\widetilde A}_-\mbox{ with }\|f\|_2=1\quad \mbox{and}\\
&\mathop{\mathrm{d}}(f,L^{\widetilde A_0}_-) \geq 1 - \|P^{\widetilde A}_- - P^{\widetilde A_0}_-\|
&&\mbox{for all }f \in L^{\widetilde A}_+ \mbox{ with }\|f\|_2=1.
\end{align*}
\end{lemma}

\Cref{in.lem.AB} is proved.
\end{proof}

\section{Derivation of the uniqueness results}\label{sec:uniqueness}
The proof of the uniqueness theorems will be based on the following two propositions: 
\begin{proposition}\label{un.prp.hel} 
For all $\kappa \in L^\infty(\Omega,\mathbb R)$ and all $\omega>0$ the operator
$\Re {\mathcal G}_{-\omega^2\kappa}(\omega)$ (resp. $\Re \widetilde {\mathcal G}_{-\omega^2 \kappa}(\omega)$) can have at most a finite number of negative eigenvalues in $L^2(\partial \Omega)$ (resp. $L^2(S^{d-1})$), multiplicities taken into account. In addition, for all $M>0$ there exists a constant $\omega_0 = \omega_0(\Omega,M)$ such that for all $\kappa$ satisfying $\|\kappa\|_\infty \leq M$ the condition
\cref{in.Dir} with $v = -\omega^2 \kappa$ is satisfied and the operator $\Re {\mathcal G}_{-\omega^2 \kappa}(\omega)$ (resp. $\Re \widetilde {\mathcal G}_{-\omega^2 \kappa}(\omega)$) is positive definite on $L^2(\partial\Omega)$ (resp. $L^2(S^{d-1})$) if $\omega \in (0,\omega_0]$.
\end{proposition}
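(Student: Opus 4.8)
The plan is to treat the finiteness assertion (valid for all $\omega$) and the low-frequency assertion separately, reducing the first to the first part of \cref{in.lem.AB} with an $\omega$-independent invertible reference operator, and the second to coercivity of the Laplace single layer operator $\mathcal E$ together with the smallness estimates of \cref{mp.lem.T} and \cref{mp.lem.S1S2est}.

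For the finiteness of the negative index I would take as reference operator $A_0:=\mathcal E$ if $d\ge 3$ — recall that the Laplace single layer operator is coercive on $H^{-\frac12}(\partial\Omega)$, hence $\mathcal E\in GL\bigl(H^{-\frac12}(\partial\Omega),H^{\frac12}(\partial\Omega)\bigr)\cap GL\bigl(L^2(\partial\Omega),H^1(\partial\Omega)\bigr)$, $\mathcal E$ is real symmetric on $L^2(\partial\Omega)$ and $\mathop{\mathrm{rk}} P^{\mathcal E}_-=0$ — and the standard rank-one stabilization $A_0:=\mathcal E+c_0\langle1,\cdot\rangle1$ with a suitable $c_0\in\mathbb R$ if $d=2$. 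By the definition \cref{mp.T} of $W(\omega)$, by \cref{mp.lem.T}, and by the smoothing property \cref{mp.S1S2cmp}, the operator $\Re{\mathcal G}_{-\omega^2\kappa}(\omega)-A_0$ equals $W(\omega)+\Re\bigl[{\mathcal G}_{-\omega^2\kappa}(\omega)-{\mathcal G}_0(\omega)\bigr]$ plus a finite-rank term (the latter only for $d=2$), hence lies in $K\bigl(H^{-\frac12}(\partial\Omega),H^{\frac12}(\partial\Omega)\bigr)$; moreover $\Re{\mathcal G}_{-\omega^2\kappa}(\omega)$ maps $L^2(\partial\Omega)$ into $H^1(\partial\Omega)$ by \cref{re.lem.Fred}, so it is compact, and it is real symmetric by the reciprocity principle, hence it satisfies \cref{in.A}. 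Thus the first part of \cref{in.lem.AB} (with, say, $\Lambda=T(\omega)$) applies and gives $\mathop{\mathrm{rk}} P^{\Re{\mathcal G}_{-\omega^2\kappa}(\omega)}_-<\infty$. For the far field version I would then apply \cref{in.lem.in} to the triple $\bigl(\Re{\mathcal G}_{-\omega^2\kappa}(\omega),\ \Re\widetilde{\mathcal G}_{-\omega^2\kappa}(\omega),\ T(\omega)\bigr)$ — admissible since $\Re\widetilde{\mathcal G}_{-\omega^2\kappa}(\omega)=T(\omega)\bigl(\Re{\mathcal G}_{-\omega^2\kappa}(\omega)\bigr)T^*(\omega)$ and $T(\omega)$ is injective by part~I of \cref{re.thm.AtBtr} — to obtain $\mathop{\mathrm{rk}} P^{\Re\widetilde{\mathcal G}_{-\omega^2\kappa}(\omega)}_-\le\mathop{\mathrm{rk}} P^{\Re{\mathcal G}_{-\omega^2\kappa}(\omega)}_-<\infty$.

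For the low-frequency statement, \cref{in.Dir} with $v=-\omega^2\kappa$ would follow from the min--max principle: the bottom of the Dirichlet spectrum of $-\Delta-\omega^2\kappa$ in $\Omega$ is at least $\mu_1(\Omega)-\omega^2(1+\|\kappa\|_\infty)$, where $\mu_1(\Omega)>0$ is the first Dirichlet eigenvalue of $-\Delta$ in $\Omega$, so \cref{in.Dir} holds whenever $\omega^2(1+M)<\mu_1(\Omega)$. For positive definiteness I would write $\Re{\mathcal G}_{-\omega^2\kappa}(\omega)=A_0+R_\omega$ as above, noting that in $d=2$ the extra rank-one summand occurring in $R_\omega$, namely $-\bigl(\tfrac1{2\pi}(\ln\tfrac\omega2+\gamma)+c_0\bigr)\langle1,\cdot\rangle1$, is positive semidefinite once $\omega$ is small. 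Using \cref{mp.lem.T}, \cref{mp.S1S2est} (with $v_1=-\kappa$, $v_2=0$, $k=\omega$) and the elementary bound $\|\cdot\|_{L(H^{-\frac12},H^{\frac12})}\le C(\Omega)\|\cdot\|_{L(H^{-\frac32},H^{\frac32})}$, the remaining part of $R_\omega$ has $L\bigl(H^{-\frac12}(\partial\Omega),H^{\frac12}(\partial\Omega)\bigr)$-norm bounded by $c(\Omega,M)\bigl(\omega+\omega^2|\ln\omega|^2\bigr)\to0$ as $\omega\to0$, uniformly for $\|\kappa\|_\infty\le M$. Choosing $\omega_0=\omega_0(\Omega,M)$ small enough (and $\le\sqrt{\mu_1(\Omega)/(1+M)}$) that this norm is below half the coercivity constant $c_{A_0}$ of $A_0$, coercivity of $A_0$ yields $\langle\Re{\mathcal G}_{-\omega^2\kappa}(\omega)\varphi,\varphi\rangle\ge\tfrac12c_{A_0}\|\varphi\|_{H^{-\frac12}(\partial\Omega)}^2>0$ for all $\varphi\in L^2(\partial\Omega)\setminus\{0\}$, i.e.\ positive definiteness on $L^2(\partial\Omega)$. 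Finally, $T(\omega)$ has dense range, so $T^*(\omega)$ is injective, whence $\langle\Re\widetilde{\mathcal G}_{-\omega^2\kappa}(\omega)\varphi,\varphi\rangle=\langle\Re{\mathcal G}_{-\omega^2\kappa}(\omega)T^*(\omega)\varphi,T^*(\omega)\varphi\rangle>0$ for $\varphi\ne0$, giving positive definiteness on $L^2(S^{d-1})$.

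The step I expect to be the main obstacle is the case $d=2$: there the Laplace single layer operator need not be coercive (because of the logarithmic-capacity phenomenon), so one has to work with the stabilized reference operator $A_0=\mathcal E+c_0\langle1,\cdot\rangle1$ throughout and keep careful track of the rank-one term produced by the low-frequency expansion \cref{mp.ReG+asm} of $\Re G^+_0$, checking that its sign is favorable for small $\omega$. Once this bookkeeping is settled, the remaining ingredients are just the smallness estimates of \cref{mp.lem.T} and \cref{mp.lem.S1S2est} and the abstract perturbation results \cref{in.lem.AB} and \cref{in.lem.in}, all already available.
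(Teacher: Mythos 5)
Your proposal is correct and follows essentially the paper's own route: the reference operator is the Laplace single-layer operator $\mathcal E$ for $d\geq 3$ and a capacity-stabilized version for $d=2$ (your $\mathcal E+c_0\langle 1,\cdot\rangle 1$ is the paper's $\mathcal E_r$ with $c_0=\tfrac{\ln r}{2\pi}$, $r>\mathrm{Cap}_{\partial\Omega}$), the difference to $\Re{\mathcal G}_{-\omega^2\kappa}(\omega)$ is compact from $H^{-\frac12}(\partial\Omega)$ to $H^{\frac12}(\partial\Omega)$ and small as $\omega\to0$ by \cref{mp.lem.T} and \cref{mp.lem.S1S2est}, and the far-field statements follow by conjugation with $T(\omega)$ via \cref{in.lem.in}. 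The deviations are minor and all legitimate: (i) for positive definiteness you argue directly with the $H^{-\frac12}$-coercivity of the reference operator and a quadratic-form perturbation estimate, whereas the paper invokes the second part of \cref{in.lem.AB} (equality of negative inertia indices plus injectivity) — your version is a bit more elementary and self-contained; (ii) in $d=2$ you absorb the rank-one logarithmic term from \cref{mp.ReG+asm} into the compact perturbation for the finiteness claim, so you do not need \cref{in.lem.add}, while the paper keeps it outside (working with $R(\omega)$) and transfers finiteness by \cref{in.lem.add}; both arguments then exploit the favorable sign of that rank-one term for small $\omega$ in the same way; (iii) you supply an explicit min--max verification of \cref{in.Dir} with $v=-\omega^2\kappa$, which the statement asserts but the paper's proof leaves implicit — a useful addition. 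One bookkeeping slip: your combined smallness rate $\omega+\omega^2|\ln\omega|^2$ does not dominate the $d=4$ bound $\omega|\ln\omega|$ from \cref{mp.lem.T}; this is immaterial, since all that is used is that the perturbation norm tends to $0$ uniformly for $\|\kappa\|_\infty\leq M$.
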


\begin{proof}
\textit{Step 1.} We are going to prove that $\Re {\mathcal G}_{-\omega^2 \kappa}(\omega)$ can have only finite number of negative eigenvalues in $L^2(\partial\Omega)$ and, in addition, there exists $\omega_0' = \omega_0'(\Omega,M)$ such that if $\omega \in (0,\omega_0']$, then $\Re {\mathcal G}_{-\omega^2 \kappa}(\omega)$ is positive definite in $L^2(\partial\Omega)$.

Let $\mathcal E$ be defined according to \cref{mp.E}. The operator $\mathcal E$ is positive definite in $L^2(\partial D)$ for $d \geq 3$, see, e.g., \cite[Cor.8.13]{mclean:00}. For the case $d=2$, the operator 
\begin{equation*}
\mathcal E_r = \mathcal E + \tfrac{1}{2\pi} \langle 1,\cdot\rangle \ln r
\end{equation*}
is positive definite in $L^2(\partial \Omega)$  if and only if $r > \mathrm{Cap}_{\partial \Omega}$, where $\mathrm{Cap}_{\partial \Omega}$ denotes the capacity of $\partial \Omega$, see, e.g., \cite[Thm.8.16]{mclean:00}. We consider the cases $d \geq 3$  and $d = 2$ separately.

\textit{$d \geq 3$.} We have that
\begin{equation*}
  \mathcal E \in GL\bigl( H^{-\frac 1 2}(\partial \Omega), H^{\frac 1 2}(\partial \Omega) \bigr) \cap GL\bigl( L^2(\partial \Omega), H^1(\partial \Omega) \bigr).
\end{equation*}
This follows from \cite[Thm.7.17 \& Cor.8.13]{mclean:00}. Using \cref{mp.lem.S1S2est} and \cref{mp.lem.T} we also get that
\begin{align*}
&\mathcal E - \Re {\mathcal G}_{-\omega^2\kappa}(\omega) \in K\bigl( H^{-\frac 1 2}(\partial \Omega), H^{\frac 1 2}(\partial \Omega) \bigr)\mbox{ with} \\
&\| \mathcal E - \Re {\mathcal G}_{-\omega^2\kappa}(\omega) \| \leq \omega^2 c_2'(\Omega,M)\|\kappa\|_\infty + \omega |\ln \omega| c_3'(\Omega)
\end{align*}
for all $\omega\in(0, \min\{ k_2(\Omega,M), k_3(\Omega) \})$, 
with the norm in $L\bigl(H^{-\frac 1 2}(\partial \Omega),H^{\frac 1 2}(\partial \Omega) \bigr)$.

Applying \cref{in.lem.AB} to the triple $\bigl(\Re {\mathcal G}_{-\omega^2\kappa}(\omega),\mathcal E,\mathrm{Id}\bigr)$, we find that $\Re {\mathcal G}_{-\omega^2 \kappa}(\omega)$ can have at most finite number of negative eigenvalues in $L^2(\partial\Omega)$ and that there exists $\omega_0' = \omega_0'(\Omega,M)$ such that $\Re {\mathcal G}_{-\omega^2\kappa}(\omega)$ is 
positive definite in $L^2(\partial \Omega)$ if $\omega \in (0,\omega_0']$.

\textit{$d = 2$.} Let $r > \mathrm{Cap}_{\partial \Omega}$. We have that
\begin{equation*}
  \mathcal E_r \in GL\bigl( H^{-\frac 1 2}(\partial \Omega), H^{\frac 1 2}(\partial \Omega) \bigr) \cap GL\bigl( L^2(\partial \Omega), H^1(\partial \Omega) \bigr).
\end{equation*}
This follows from \cite[Thm.7.18 \& Thm.8.16]{mclean:00}. Using \cref{mp.lem.S1S2est} and \cref{mp.lem.T} we also have that
\begin{gather*}
  \mathcal E_r - \Re {\mathcal G}_{-\omega^2\kappa}(\omega) \in K\bigl( H^{-\frac 1 2}(\partial \Omega), H^{\frac 1 2}(\partial \Omega) \bigr). 
\end{gather*}
Note that 
\begin{equation}\label{un.SEr}
\begin{gathered}
  \Re {\mathcal G}_{-\omega^2\kappa}(\omega) - \mathcal E_r = \bigl( \Re {\mathcal G}_{-\omega^2\kappa}(\omega) - \Re {\mathcal G}_0(\omega) \bigr) 
  + W(\omega) - \tfrac{1}{2\pi} \bigl( \ln \tfrac{\omega r}{2} + \gamma \bigr) \langle 1, \cdot \rangle 1,
\end{gathered}
\end{equation}
where $W(\omega)$, $\gamma$ are defined according to \cref{mp.T}. Fix $r > \mathrm{Cap}_{\partial \Omega}$. Using \cref{mp.lem.S1S2est}, \cref{mp.lem.T} and formula \cref{un.SEr} we obtain that
\begin{equation*}
 \begin{aligned}
  &\| R(\omega) - \mathcal E_r \| \leq \omega^2 |\ln \omega|^2 c_2'(\Omega,M)\|\kappa\|_\infty + \omega^2 |\ln \omega| c_3'(\Omega), \\
 &\mbox{with } R(\omega) := \Re {\mathcal G}_{-\omega^2\kappa}(\omega) + \tfrac{1}{2\pi} \bigl( \ln \tfrac{\omega r}{2} + \gamma \bigr)\langle 1, \cdot \rangle 1 
 \end{aligned}
\end{equation*}
for all $\omega \in (0, \min\{ k_2(\Omega,M),k_3(\Omega)\})$
with the norm in $L\bigl(H^{-\frac 1 2}(\partial \Omega),H^{\frac 1 2}(\partial \Omega) \bigr)$.

Applying \cref{in.lem.AB} to the triple $\bigl(R(\omega), \mathcal E_r, \mathrm{Id}\bigr)$, we find that $R(\omega)$ can have only finite number of negative eigenvalues in $L^2(\partial \Omega)$ and, in addition, there exists $\omega_0' = \omega_0'(\Omega,M,r)$ such that if $\omega \in (0,\omega_0']$, then $R(\omega)$ is positive definite in $L^2(\partial \Omega)$. 

Applying \cref{in.lem.add} to the pair of operators $\bigl(R(\omega),\Re {\mathcal G}_{-\omega^2\kappa}(\omega)\bigr)$ we obtain that $\Re {\mathcal G}_{-\omega^2\kappa}(\omega)$ can have only finite number of negative eigenvalues in $L^2(\partial\Omega)$, since it is true for $R(\omega)$ and $\tfrac{1}{2\pi} \bigl( \ln \tfrac{\omega r}{2} + \gamma \bigr) \langle 1, \cdot \rangle 1$ is a rank one operator. Assuming, without loss of generality, that $\omega_0' < \tfrac 2 r e^{-\gamma}$, one can also see that the operator $\Re {\mathcal G}_{-\omega^2\kappa}(\omega)$ is positive definite for $\omega \in (0,\omega_0']$, as long as $R(\omega)$ is positive definite and $-\tfrac{1}{2\pi} \bigl( \ln \tfrac{\omega r}{2} + \gamma \bigr)\langle 1, \cdot \rangle 1$ is non-negative definite.

\textit{Step 2.} Applying \cref{in.lem.in} to the triple 
\begin{equation*}
 (\Re{\mathcal G}_{-\omega^2\kappa}(\omega),\Re \widetilde {\mathcal G}_{-\omega^2\kappa}(\omega),T(\omega)),
\end{equation*}
we find that the operator $\Re \widetilde {\mathcal G}_{-\omega^2\kappa}(\omega)$ can have only finite number of negative eigenvalues in $L^2(S^{d-1})$ and, in addition, there exists $\omega_0 = \omega_0(\Omega,M)$, $\omega_0 < \omega_0'$, such that $\Re \widetilde{\mathcal G}_{-\omega^2\kappa}(\omega)$ is positive definite in $L^2(S^{d-1})$ if $\omega \in (0,\omega_0]$.
\end{proof}

\begin{proposition}\label{un.prp.sch}  Let $v$, $v_0 \in L^\infty(\Omega,\mathbb R)$. 
Suppose that $k>0$ is such that $\Re {\mathcal G}_{v_0}(k)$ is injective in $H^{-\frac 1 2}(\partial \Omega)$ and \cref{un.Geig} holds true for $v_0$. Moreover, let $L^{v_0}_-$ denote the linear space spanned by the eigenfunctions of $\Re \widetilde {\mathcal G}_{v_0}(k)$ corresponding to negative eigenvalues, and
let $f \in L^2(S^{d-1})$ with $\|f\|_2=1$, be such that $\Re \widetilde {\mathcal G}_v\bigr(k) f = \lambda f$.
Then for any $\sigma>0$ there exists $\delta = \delta(\Omega,k,v_0,\sigma)$ such that if $\|v - v_0\|_\infty \leq \delta$, then
\begin{enumerate}
 \item $\Re \mathcal G_v(k)$ is injective in $H^{-\frac 1 2}(\partial\Omega)$,
 \item \cref{un.Geig} holds true for $v$,
 \item $\lambda < 0$ if and only if $\mathop{\mathrm{d}}(f,L^{v_0}_-) < \tfrac 1 2$,
 \item all negative eigenvalues of $\Re \widetilde{\mathcal G}_v(k)$ in $L^2(S^{d-1})$ belong to the $\sigma$-neighborhood of negative eigenvalues of $\Re \widetilde{\mathcal G}_{v_0}(k)$.
\end{enumerate}
\end{proposition}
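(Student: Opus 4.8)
The plan is to deduce assertions~1, 3 and~4 directly from \cref{in.lem.AB} and to establish assertion~2 separately; the latter is the main obstacle. To this end I would set $S_1=\partial\Omega$, $S_2=S^{d-1}$, $A:=\Re\mathcal G_v(k)$, $A_0:=\Re\mathcal G_{v_0}(k)$ and $\Lambda:=T(k)$, so that $\Lambda A\Lambda^*=\Re\widetilde{\mathcal G}_v(k)$ and $\Lambda A_0\Lambda^*=\Re\widetilde{\mathcal G}_{v_0}(k)$ by \eqref{re.AtBt}, and $L^{v_0}_-$ equals the negative eigenspace of $\Lambda A_0\Lambda^*$. The hypotheses of \cref{in.lem.AB} are then checked as follows: $A$ and $A_0$ are real and symmetric on $L^2(\partial\Omega)$ by reciprocity of $G_v$, and compact because $\mathcal G_v(k)\in L(L^2(\partial\Omega),H^1(\partial\Omega))$ by \cref{re.lem.Fred} while $H^1(\partial\Omega)\hookrightarrow L^2(\partial\Omega)$ is compact, so $A$ and $A_0$ satisfy \eqref{in.A}; $\Lambda=T(k)$ is injective by part~I of \cref{re.thm.AtBtr}; $A_0$ is Fredholm of index zero in $L(H^{-1/2}(\partial\Omega),H^{1/2}(\partial\Omega))$ and in $L(L^2(\partial\Omega),H^1(\partial\Omega))$ by \cref{re.lem.Fred}, and injective there because it is injective in $H^{-1/2}(\partial\Omega)$ by hypothesis and $L^2(\partial\Omega)\subset H^{-1/2}(\partial\Omega)$, whence $A_0\in GL(H^{-1/2}(\partial\Omega),H^{1/2}(\partial\Omega))\cap GL(L^2(\partial\Omega),H^1(\partial\Omega))$; and, by \cref{mp.lem.S1S2est}, $A-A_0=\Re(\mathcal G_v(k)-\mathcal G_{v_0}(k))$ lies in $K(H^{-1/2}(\partial\Omega),H^{1/2}(\partial\Omega))$ with $\|A-A_0\|\le c_2(\Omega,k,v_0)\,\|v-v_0\|_\infty$ whenever $\|v-v_0\|_\infty\le\delta_2(\Omega,k,v_0)$.

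The only remaining hypothesis of \cref{in.lem.AB}, namely $\mathop{\mathrm{rk}}P^{A_0}_-<\infty$, I would establish as in Step~1 of the proof of \cref{un.prp.hel}: for $d\ge3$ one combines \cref{mp.lem.S1S2est} and \cref{mp.lem.T} to get $A_0-\mathcal E\in K(H^{-1/2}(\partial\Omega),H^{1/2}(\partial\Omega))$, notes that $\mathcal E$ is positive definite and lies in $GL(H^{-1/2}(\partial\Omega),H^{1/2}(\partial\Omega))\cap GL(L^2(\partial\Omega),H^1(\partial\Omega))$, and applies the first part of \cref{in.lem.AB} to the triple $(A_0,\mathcal E,\mathrm{Id})$; for $d=2$ one replaces $\mathcal E$ by $\mathcal E_r$ with $r>\mathrm{Cap}_{\partial\Omega}$, absorbs the rank-one logarithmic term of \eqref{mp.T}, and removes it via \cref{in.lem.add}. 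Then \cref{in.lem.AB} applies and yields, for every $\sigma>0$, a number $\delta_0=\delta_0(\Omega,k,v_0,\sigma)>0$ such that $\|A-A_0\|<\delta_0$ forces items (a)--(d) of that lemma; since $\Re\widetilde{\mathcal G}_v(k)=\Lambda A\Lambda^*$ and $L^{v_0}_-=L^{\Lambda A_0\Lambda^*}_-$, items (b), (c), (d) are exactly assertions~1, 3, 4 of the proposition, and (a) gives in addition $\mathop{\mathrm{rk}}P^{\Re\widetilde{\mathcal G}_v(k)}_-=\mathop{\mathrm{rk}}P^{\Re\widetilde{\mathcal G}_{v_0}(k)}_-<\infty$. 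Re-expressing the bound on $\|A-A_0\|$ as a bound on $\|v-v_0\|_\infty$ through \cref{mp.lem.S1S2est} takes care of the dependence of the constant.

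There remains assertion~2, that \eqref{un.Geig} holds for $v$; this is where the real work lies. By part~III of \cref{re.thm.AtBtr} and \eqref{re.lAlBr} --- which hold for every $v\in L^\infty(\Omega,\mathbb R)$, the underlying operator identities of \cref{re.thm.ABr} depending continuously on $v$ by \cref{mp.lem.S1S2est} and being valid on the dense set of potentials satisfying \eqref{in.Dir} --- a failure of \eqref{un.Geig} for $v$ is equivalent to the existence of eigenvalues $\nu+i\mu$ and $-\nu+i\mu$ of $\widetilde{\mathcal G}_v(k)$ with $\nu>0$, $\mu\in(0,1)$ and $\nu^2=\mu-\mu^2\le\tfrac14$; in particular $-\nu$ is then a negative eigenvalue of $\Re\widetilde{\mathcal G}_v(k)$. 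If $\Re\widetilde{\mathcal G}_{v_0}(k)$ has no negative eigenvalue, this cannot occur once $\|v-v_0\|_\infty$ is small, by item (a). Otherwise I would argue by contradiction: if no uniform threshold existed there would be $v_n\to v_0$ in $L^\infty(\Omega)$ admitting such pairs $\pm\nu_n+i\mu_n$ for $\widetilde{\mathcal G}_{v_n}(k)$; choosing $\sigma$ below the least modulus of a negative eigenvalue of $\Re\widetilde{\mathcal G}_{v_0}(k)$, item (d) for $n$ large confines the $\nu_n$ to a compact subset of $(0,\tfrac12]$, so along a subsequence $\nu_n\to\nu^*>0$ and $\mu_n\to\mu^*\in(0,1)$ with $(\nu^*)^2=\mu^*-(\mu^*)^2$. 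Since $\widetilde{\mathcal G}_{v_n}(k)\to\widetilde{\mathcal G}_{v_0}(k)$ in $L(L^2(S^{d-1}))$ (by \cref{mp.lem.S1S2est} and boundedness of $T(k)$) and the spectrum is upper semicontinuous under norm convergence, $\pm\nu^*+i\mu^*\in\sigma(\widetilde{\mathcal G}_{v_0}(k))$, hence, $\widetilde{\mathcal G}_{v_0}(k)$ being compact, are eigenvalues of $\widetilde{\mathcal G}_{v_0}(k)$ with equal imaginary part but opposite nonzero real parts, contradicting \eqref{un.Geig} for $v_0$. This produces a threshold $\delta_1=\delta_1(\Omega,k,v_0)>0$, and taking $\delta=\min\{\delta_0,\delta_1\}$ (again translated into a bound on $\|v-v_0\|_\infty$) establishes all four assertions at once. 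The delicate point is that a violation of \eqref{un.Geig} for $v$ could a priori stem from a pair of eigenvalues clustering at $0$; this is excluded precisely because item (d) keeps the negative member of the pair close to a negative eigenvalue of $\Re\widetilde{\mathcal G}_{v_0}(k)$, which is bounded away from $0$ by finiteness of $\mathop{\mathrm{rk}}P^{\Re\widetilde{\mathcal G}_{v_0}(k)}_-$.
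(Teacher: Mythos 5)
Your proposal is correct and takes essentially the same route as the paper: the same application of \cref{in.lem.AB} to the triple $(\Re\mathcal G_v(k),\Re\mathcal G_{v_0}(k),T(k))$, with the hypotheses verified through \cref{re.lem.Fred}, \cref{mp.lem.S1S2est} and \cref{un.prp.hel} (which you re-derive for the finiteness of the negative inertia instead of citing it directly with $\kappa=-v_0/k^2$, $\omega=k$). Your explicit compactness/contradiction argument for assertion~2, using item (d) together with upper semicontinuity of the spectrum, is a legitimate and more detailed filling-in of the paper's terse appeal to Kato's perturbation theorem and the finiteness of the set of eigenvalues of $\widetilde{\mathcal G}_v(k)$ with negative real part.
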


\begin{proof}
Put
\begin{equation*}
 A := \Re {\mathcal G}_v(k), \; A_0 := \Re {\mathcal G}_{v_0}(k), \; 
\widetilde A:= \Re \widetilde {\mathcal G}_v(k), \; \widetilde 
A_0 := \Re \widetilde {\mathcal G}_{v_0}(k).
\end{equation*}
It follows from \cref{un.prp.hel} that $\mathop{\mathrm{rk}} P^{A_0}_- < \infty$.

Using \cref{re.lem.Fred} and the injectivity of $A_0$ in $H^{-1/2}(\partial \Omega)$, we obtain that
\begin{equation*}
 A_0 \in GL\bigl( H^{-1/2}(\partial \Omega),H^{1/2}(\partial \Omega) \bigr) \cap GL\bigl( L^2(\partial \Omega), H^1(\partial \Omega)\bigr).
\end{equation*}
It also follows from \cref{mp.lem.S1S2est} that
\begin{equation*}
  A - A_0 \in K\bigl(H^{-1/2}(\partial \Omega), H^{1/2}(\partial \Omega) \bigr).
\end{equation*}
Applying \cref{in.lem.AB} to the triple $\bigl(A,A_0,T\bigr)$, we find that there exists $\delta' = \delta'(\Omega,k,v_0)$ such that if $\|A-A_0\| \leq \delta'$ in $L\bigl( H^{-1/2}(\partial \Omega), H^{1/2}(\partial\Omega) \bigr)$, then $\mathop{\mathrm{rk}} P^{\widetilde A}_- = \mathop{\mathrm{rk}} P^{\widetilde A_0}_-$ and $A$ is injective in $H^{-1/2}(\partial\Omega)$.
Moreover, if $\widetilde A f = \lambda f$ with 
$\|f\|_2 = 1$, then $\lambda<0$ if and only if $\mathop{\mathrm{d}}(f,L^{v_0}_-) < \frac 1 2$.  Also note that, in view of \cref{mp.lem.S1S2est}, there exists 
$\delta = \delta(\Omega,k,v_0)$ such that if $\|v-v_0\|_\infty\leq \delta$, 
then $\|A-A_0 \| \leq \delta'$.

It remains to show that if $\|v-v_0\|_\infty \leq \delta$ for $\delta$ small enough and \cref{un.Geig} holds true for $v_0$, then it also holds true for $v_0$. But this property follows from the upper semi-continuity of a finite number of eigenvalues of $\widetilde{\mathcal G_v}(k)$ with respect to perturbations (see \cite[Thm.3.16 p.212]{kato:80}), from \cref{mp.lem.S1S2est} and from the fact that $\widetilde{\mathcal G_v}(k)$ has at most a finite number of eigenvalues with negative real part (see \cref{un.prp.hel} with $-\omega^2\kappa = v$, $\omega=k$).

\Cref{un.prp.sch} is proved.

\end{proof}

\subsection{Proof of \cref{un.thm.sch}}\label{un.thm.schp}

Let $k>0$ and $v_0$ be the same as in the formulation of \cref{un.thm.sch}. 
It follows from 
\cref{un.prp.hel} with $v_0 = -\omega^2 \kappa$ that the operator $\Re \widetilde {\mathcal G}_{v_0}(k)$ can have only finite number of negative eigenvalues in $L^2(\partial \Omega)$, multiplicities taken into account. 

Let $\delta = \delta(\Omega,k,v_0)$ be choosen as in \cref{un.prp.sch}. Suppose that $v_1$, $v_2$ are two functions satisfying the conditions of \cref{un.thm.sch} and put 
\begin{equation*}
\widetilde A_j := \Re \widetilde {\mathcal G}_{v_j}(k), \quad
\widetilde B_j := \Im \widetilde {\mathcal G}_{v_j}(k), \quad j = 1, 2. 
\end{equation*}
By the assumptions of the present theorem, 
\begin{equation*}
  \widetilde B_1 = \widetilde B_2.
\end{equation*}
Together with \cref{re.thm.AtBtr} and formula \cref{un.Geig} it follows that the operators $\widetilde A_1$ and $\widetilde A_2$ have a common basis of eigenfunctions in $L^2(S^{d-1})$ and that if $\widetilde A_1 f = \lambda_1 f$, $\widetilde A_2 f = \lambda_2 f$, for some $f \in L^2(S^{d-1})$, $\|f\|_2 = 1$, then
\begin{equation}\label{re.lA1lA2}
  |\lambda_1| = |\lambda_2|.
\end{equation}
More precisely, any eigenbasis of $\widetilde B_1$ is a common eigenbasis for $\widetilde A_1$ and $\widetilde A_2$.

It follows from \cref{un.prp.sch} that $\lambda_1 < 0$ if and only if $\mathop{\mathrm{d}}(f,L^{v_0}_-) < \tfrac 1 2$, and the same condition holds true for $\lambda_2$. Hence, $\lambda_1 < 0$ if and only if $\lambda_2 < 0$. Thus, we have 
\begin{equation}\label{re.At1At2}
  \widetilde A_1 = \widetilde A_2.
\end{equation}
Since by \cref{re.thm.AtBtr} (I) the operator $T$ is injective with dense range the same is true for $T^*$ by \cite[Thm. 4.6]{CK:13}. Injectivity of $T$ and \cref{re.At1At2} imply $A_1 T^* = A_2 T^*$. This equality, density of the range of $T^*$ and continuity of $A_1$, $A_2$ now imply that $A_1 = A_2$ and hence ${\mathcal G}_{v_1}(k) = {\mathcal G}_{v_2}(k)$.

Now we can use that fact that ${\mathcal G}_{v_1}(k)={\mathcal G}_{v_2}(k)$ implies $v_1=v_2$ if \eqref{in.Dir} holds true for $v = v_1$ and $v=v_2$, see \cite{novikov:88,berezanskii:58}. In turn, property \eqref{in.Dir} for $v = v_j$ follows from injectivity of $\Re \mathcal G_{v_j}(k)$ in $H^{-1/2}(\partial\Omega)$ (see \cref{un.prp.sch}). This completes the proof of \cref{un.thm.sch}.

\subsection{Proof of \cref{un.thm.hel}}

Let $\omega_0 = \omega_0(\Omega,M)$ be as in \cref{un.prp.hel} and let $\omega \in (0,\omega_0]$ be fixed. Put 
\begin{equation*}
  \widetilde A_j := \Re \widetilde {\mathcal G}_{-\omega^2 \kappa_j}(\omega), \quad
  \widetilde B_j := \Im \widetilde {\mathcal G}_{-\omega^2 \kappa_j}(\omega), \quad j = 1, 2.
\end{equation*}

It follows from \cref{un.prp.hel} that all the eigenvalues of $\widetilde{\mathcal G}_{-\omega^2\kappa}(\omega)$ have positive real parts so that condition \cref{un.Geig} is valid. As in the proof of \cref{un.thm.sch} one can show that $\widetilde A_1$, $\widetilde A_2$ have a common basis of eigenfunctions in $L^2(S^{d-1})$ (any eigenbasis of $\widetilde B_1$ is a common eigenbasis for $\widetilde A_1$ and $\widetilde A_2$) and the relation \cref{re.lA1lA2} holds.

In view of \cref{un.prp.hel} we also have that $\lambda_1 > 0$, $\lambda_2 >0$ such that $\lambda_1 = \lambda_2$. Thus, \cref{re.At1At2} holds true. 
Starting from equality \cref{re.At1At2} and reasoning as in the end of the proof of \cref{un.thm.sch}, we obtain that $\kappa_1 = \kappa_2$, completing the proof of \cref{un.thm.hel}.

\section{Discussion of the assumptions of Theorem \ref{un.thm.sch}}\label{un.lem.genp}
The aim of this section is to present results indicating that the assumptions of Theorem \ref{un.thm.sch} are always satisfied except 
for a discrete set of exceptional parameters. As a first step we characterize the adjoint operator 
$\widetilde{\mathcal G}_v^*(k)$ as a farfield operator for the scattering of distorted plane waves at 
$\Omega$ with Dirichlet boundary conditions. 
Note that, in particular, $-\frac{1}{k\overline{\cthree(d,k)}}\widetilde{\mathcal G}_0^*(k)$ 
is a standard farfield operator for Dirichlet scattering at $\Omega$ (see e.g.\ \cite[\S 3.3]{CK:13}).

\begin{lemma}\label{ge.lem.sct} Let $v$ satisfy \eqref{in.v} and consider $\psi^+_v$ and $\cthree$ as 
defined as in \cref{re.thm.AtBtrp}. 
Then we have $k\overline{\cthree(d,k)}\widetilde{\mathcal G}_v^*(k)g = u_\infty$  
for any $g \in L^2(S^{d-1})$ 
where $u_\infty \in L^2(S^{d-1})$ is the farfield pattern of the solution $u$ to the exterior boundary 
value problem \eqref{eqs:extBVP} with boundary values
\[
u_0(x) = \int_{S^{d-1}} \overline{\psi^+_v(x,-k\omega)} g(\omega) \, ds(\omega), \quad x \in \partial\Omega.
\]
\end{lemma}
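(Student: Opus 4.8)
The plan is to obtain the identity by taking the adjoint of the factorization \eqref{re.Cvfact} established in \cref{re.lem.C}. Recall that by \eqref{re.AtBt} we have $\widetilde{\mathcal G}_v^*(k)=T(k)\,\mathcal G_v^*(k)\,T^*(k)$, and that the reciprocity relation $G^+_v(x,y,k)=G^+_v(y,x,k)$ identifies the Schwartz kernel of $\mathcal G_v^*(k)$ with $\overline{G^+_v(x,y,k)}$; thus $\mathcal G_v^*(k)$ is the operator with complex-conjugate Schwartz kernel of $\mathcal G_v(k)$, denoted $\overline{\mathcal G_v(k)}$ as in \cref{re.thm.ABrp}. \Cref{re.lem.C} reads $T(k)\mathcal G_v(k)=\sqrt{k}\,\cthree(d,k)\,R\,\overline{H_v^*(k)}$ as operators from $H^{-1}(\partial\Omega)$ into $L^2(S^{d-1})$, where $H_v(k)$ is the operator from \cref{re.thm.AtBtrp} given by $(H_v(k)g)(x)=\int_{S^{d-1}}\psi^+_v(x,k\omega)g(\omega)\,ds(\omega)$ and $(Rh)(\omega)=h(-\omega)$.

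First I would take adjoints in \eqref{re.Cvfact}. Using $R^*=R$, the general rule $(\overline{\mathcal T})^*=\overline{\mathcal T^*}$, and $(H_v^*(k))^*=H_v(k)$, one computes $\bigl(R\,\overline{H_v^*(k)}\bigr)^*=\overline{H_v(k)}\,R$, whence
\[
 \mathcal G_v^*(k)\,T^*(k)=\sqrt{k}\,\overline{\cthree(d,k)}\;\overline{H_v(k)}\,R.
\]
Composing with $T(k)$ on the left and invoking $\widetilde{\mathcal G}_v^*(k)=T(k)\mathcal G_v^*(k)T^*(k)$ yields $\widetilde{\mathcal G}_v^*(k)=\sqrt{k}\,\overline{\cthree(d,k)}\,T(k)\,\overline{H_v(k)}\,R$.

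Next I would recognise the Dirichlet datum. For $g\in L^2(S^{d-1})$, writing out $\overline{H_v(k)}$, using $(Rg)(\omega)=g(-\omega)$ and the substitution $\omega\mapsto-\omega$ on $S^{d-1}$, one finds
\[
 \bigl(\overline{H_v(k)}\,Rg\bigr)(x)=\int_{S^{d-1}}\overline{\psi^+_v(x,-k\omega)}\,g(\omega)\,ds(\omega)=u_0(x),\qquad x\in\partial\Omega,
\]
which is exactly the boundary value in the statement; equivalently $u_0=(R\,\overline{H_v^*(k)})^*g$, so $u_0\in H^1(\partial\Omega)\subset L^2(\partial\Omega)$. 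The defining property of $T(k)$ in \cref{re.thm.AtBtr}~(I) is $T(k)u_0=\sqrt{k}\,u_\infty$, where $u_\infty$ is the farfield pattern of the radiating solution of the exterior problem \eqref{eqs:extBVP} with boundary value $u_0$ (since $v\in L^\infty(\Omega)$ gives $\psi^+_v(\cdot,k\omega)\in H^2_{\mathrm{loc}}$, the datum $u_0$ has a genuine trace; for $u_0$ merely in $L^2(\partial\Omega)$ one reads $u_\infty:=k^{-1/2}T(k)u_0$, the continuous extension). Combining the last two displays with $T(k)u_0=\sqrt{k}u_\infty$ gives $\widetilde{\mathcal G}_v^*(k)g=\sqrt{k}\,\overline{\cthree(d,k)}\,T(k)u_0=k\,\overline{\cthree(d,k)}\,u_\infty$, which is the assertion of the lemma.

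The argument is essentially bookkeeping with conjugations and adjoints once \cref{re.lem.C} --- whose proof supplies the one substantive ingredient, the mixed reciprocity relation --- is available. The only point deserving a little care is to check that $u_0$ is regular enough for the exterior boundary value problem \eqref{eqs:extBVP} and for its farfield pattern to be meaningful classically, which in general is handled by appealing to the $L^2$-continuous extension of $T(k)$ furnished by \cref{re.thm.AtBtr}~(I).
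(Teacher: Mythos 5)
Your proof is correct and follows essentially the same route as the paper's own argument: take adjoints in \eqref{re.Cvfact}, identify $\overline{H_v}Rg$ with the Dirichlet datum $u_0$, and apply the defining property $T(k)u_0=\sqrt{k}\,u_\infty$. One remark: the identity you obtain, $\widetilde{\mathcal G}_v^*(k)g = k\,\overline{\cthree(d,k)}\,u_\infty$, is exactly what the paper's proof also derives, so your closing claim that this ``is the assertion of the lemma'' glosses over the fact that the lemma as printed places the constant $k\,\overline{\cthree(d,k)}$ on the other side --- a typo in the statement (your version is the one consistent with the normalization $-\tfrac{1}{k\overline{\cthree(d,k)}}\widetilde{\mathcal G}_0^*(k)$ used in \cref{un.lem.genp}), not a gap in your argument.
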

\begin{proof} It follows from the definition of operators  $H_v$, $T$, $R$ in \cref{re.thm.AtBtrp} that 
$u_0 = \overline{H_v}Rg$ and $u_\infty = k^{-1/2}T \overline{H_v} R g$.
Using eq.~\cref{re.Cvfact} in Lemma \ref{re.lem.C} we also find that 
$\sqrt{k}\,\overline{\cthree(d,k)}\overline{H_v} R = \mathcal G_v^* T^*$. 
Hence, 
$k\,\overline{\cthree(d,k)}u_\infty = T\mathcal G_v^*T^*g
= \widetilde{\mathcal G}_v^* g$.
\end{proof}

\begin{lemma}\label{un.lem.gen} Let $\Omega$ satisfy \eqref{in.dom} and suppose that $\Omega$ is stricty starlike in the sense that $x \nu_x > 0$ for all $x \in \partial\Omega$, where $\nu_x$ is the unit exterior normal to $\partial\Omega$ at $x$. Let $v \in L^\infty(\Omega,\mathbb R)$ and let $k>0$ be such that $k^2$ is not a Dirichlet eigenvalue of $-\Delta$ in $\Omega$. Then there exist $M = M(k,\Omega)>0$, $\varepsilon=\varepsilon(k,\Omega)>0$, such that if $\|v\|_\infty \leq M$, then $\widetilde{\mathcal G}_v(\xi)$ satisfies \eqref{un.Geig} for all but a finite number of $\xi \in [k,k+\varepsilon)$.	
\end{lemma}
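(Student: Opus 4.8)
The plan is to combine the geometric description of the spectrum of $\widetilde{\mathcal G}_v$ from \cref{re.thm.AtBtr}, the finiteness of the negative index of inertia of $\Re\widetilde{\mathcal G}_v$ supplied by \cref{un.prp.hel}, real analyticity in the frequency, and — for the decisive nondegeneracy — the farfield interpretation of \cref{ge.lem.sct} together with strict starlikeness. First I would fix the parameters. Since $\sqrt{\sigma_\Omega(-\Delta)}$ is discrete and $k\notin\sqrt{\sigma_\Omega(-\Delta)}$, choose $\varepsilon=\varepsilon(k,\Omega)>0$ so that $\xi^2$ is not a Dirichlet eigenvalue of $-\Delta$ in $\Omega$ for $\xi\in[k,k+\varepsilon)$; since the Dirichlet eigenvalues of $-\Delta+v$ lie within $\|v\|_\infty$ of those of $-\Delta$ (min--max), there is $M=M(k,\Omega)>0$ such that whenever $\|v\|_\infty\le M$ the condition \cref{in.Dir} holds for $-\Delta+v$ at every $\xi\in[k,k+\varepsilon)$, so that \cref{re.thm.AtBtr} applies for all such $(\xi,v)$. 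By \cref{re.lem.Fred} the map $\xi\mapsto\mathcal G_v(\xi)$ is real analytic, and the exterior Dirichlet problem \cref{eqs:extBVP}, hence $T(\cdot)$, depends analytically on the frequency; thus $\xi\mapsto\widetilde{\mathcal G}_v(\xi)=\widetilde A_v(\xi)+i\widetilde B_v(\xi)$, with $\widetilde A_v:=\Re\widetilde{\mathcal G}_v$ and $\widetilde B_v:=\Im\widetilde{\mathcal G}_v=\widetilde{\mathcal G}_v\widetilde{\mathcal G}_v^*\ge 0$ (the latter by \cref{re.AtBtr.1}, \cref{re.AtBtr.2}), is a real-analytic family of compact operators on $L^2(S^{d-1})$. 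By \cref{re.thm.AtBtr} (III) every eigenvalue of $\widetilde{\mathcal G}_v(\xi)$ lies on the circle $\Gamma=\{\lambda\in\mathbb C:|\lambda|^2=\Im\lambda\}$, on which two distinct points with equal imaginary part are precisely the mirror images across the imaginary axis; therefore \cref{un.Geig} fails for $\widetilde{\mathcal G}_v(\xi)$ exactly when $\widetilde{\mathcal G}_v(\xi)$ has an eigenvalue $\lambda$ with $\Re\lambda<0$ whose mirror image $-\overline\lambda$ is also an eigenvalue (equivalently, when $\widetilde A_v(\xi)$ is not a scalar on some eigenspace of $\widetilde B_v(\xi)$).

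The second step is a localization. Applying \cref{un.prp.hel} with $\omega=\xi$ and $\kappa=-\xi^{-2}v$ shows that $\widetilde A_v(\xi)$ has only finitely many negative eigenvalues, $n(\xi):=\mathop{\mathrm{rk}}P^{\widetilde A_v(\xi)}_-<\infty$; being negative eigenvalues of a compact operator they are bounded away from $0$, so by the relation defining $\Gamma$ every eigenvalue $\lambda$ of $\widetilde{\mathcal G}_v(\xi)$ with $\Re\lambda<0$ has $\Im\lambda=|\lambda|^2$ in a compact subinterval $[\mu_-(\xi),\mu_+(\xi)]\subset(0,1)$, in which $\widetilde B_v(\xi)$ has only finitely many eigenvalues. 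Hence \cref{un.Geig} fails at $\xi$ precisely when $-\overline{\lambda_j(\xi)}\in\sigma(\widetilde{\mathcal G}_v(\xi))$ for one of the finitely many eigenvalues $\lambda_1(\xi),\dots,\lambda_{n(\xi)}(\xi)$ of $\widetilde{\mathcal G}_v(\xi)$ having negative real part. By stability of a finite part of the spectrum under small perturbations (\cite[Thm.~3.16, p.~212]{kato:80}), $n(\xi)$ is locally constant off a discrete subset of $[k,k+\varepsilon)$; on each interval of constancy, Rellich's theorem / analytic perturbation theory applied to the analytic commuting self-adjoint pair $\widetilde A_v(\cdot)$, $\widetilde B_v(\cdot)$ organizes the negative-real-part eigenvalues into real-analytic branches $\lambda_1(\cdot),\dots,\lambda_n(\cdot)$. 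For each $j$, $\xi\mapsto\widetilde{\mathcal G}_v(\xi)+\overline{\lambda_j(\xi)}\,\mathrm{Id}$ is an analytic family of Fredholm operators of index zero, so by the analytic Fredholm theorem it is non-injective either only on a discrete set or on a whole subinterval.

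The hard part — and the place where strict starlikeness is used — is ruling out the second alternative. By \cref{ge.lem.sct}, $\widetilde{\mathcal G}_v^*(\xi)$ is, up to a nonzero scalar, the farfield operator for Dirichlet scattering at $\Omega$ of the distorted plane waves $\overline{\psi^+_v(\cdot,-\xi\omega)}$. I would differentiate this exterior problem in $\xi$ and use a Rellich/virial identity in which the hypothesis $x\,\nu_x>0$ enters with a definite sign — strictly so at $v=0$, hence, after further decreasing $M=M(k,\Omega)$, for all $\|v\|_\infty\le M$ — to obtain that $\partial_\xi\langle\widetilde{\mathcal G}_v(\xi)g,g\rangle$ is bounded below by a positive multiple of $\|\widetilde{\mathcal G}_v(\xi)g\|^2$ along eigenvectors $g$; this forces the eigenvalues of $\widetilde{\mathcal G}_v(\xi)$ to travel strictly monotonically along $\Gamma$ as $\xi$ increases. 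A branch $\lambda_j(\cdot)$ with $\Re\lambda_j<0$ and its mirror $-\overline{\lambda_j(\cdot)}$ would then have to traverse $\Gamma$ at the same rate in opposite senses, which is impossible; so $-\overline{\lambda_j(\xi)}\notin\sigma(\widetilde{\mathcal G}_v(\xi))$ off a discrete set, and consequently the set of $\xi\in[k,k+\varepsilon)$ at which \cref{un.Geig} fails is discrete, hence — after shrinking $\varepsilon$ so that it does not accumulate at the right endpoint — finite. I expect the only genuinely delicate point to be making this Rellich/virial monotonicity precise for the distorted-plane-wave farfield operator and uniform in $\|v\|_\infty\le M$; the analyticity, the finiteness from \cref{un.prp.hel}, and the analytic Fredholm dichotomy are routine.
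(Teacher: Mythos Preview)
Your architecture — eigenvalues on the circle $|\lambda-i/2|=1/2$, finiteness of the negative-real-part eigenvalues from \cref{un.prp.hel}, organization into analytic branches, and a monotonicity input coming from the starlikeness hypothesis — is the same as the paper's, and your mirror-branch argument (a clockwise-moving branch and its reflection $-\overline{\lambda_j}$ move in opposite senses on the circle, so they can coincide with another clockwise-moving branch only at isolated $\xi$) is precisely the mechanism the paper uses. The difference lies in how the monotonicity is obtained and for which $v$.

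The paper separates the two cases. For $v=0$, the operator $\widetilde{\mathcal G}_0^*(\xi)$ is (up to a scalar) the standard obstacle farfield operator, and the strict motion of its eigenvalues on the circle is a \emph{citable} result of Helton and Ralston \cite{HR:76}, with the speed given exactly by the boundary integral $\int_{\partial\Omega}|\partial f/\partial\nu_x|^2\,x\nu_x\,ds(x)$; the non-Dirichlet-eigenvalue hypothesis is used here to exclude $\partial f/\partial\nu_x\equiv 0$. This, together with \cref{un.prp.hel}, yields \eqref{un.Geig} for $\widetilde{\mathcal G}_0(\xi)$ for all but finitely many $\xi\in[k,k+\varepsilon)$. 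For general small $v$ the paper does \emph{not} redo any monotonicity or Rellich identity; it simply invokes \cref{un.prp.sch}: the finitely many eigenvalues of $\widetilde{\mathcal G}_v(\xi)$ with negative real part lie in an arbitrarily small $\sigma$-neighborhood of those of $\widetilde{\mathcal G}_0(\xi)$ once $\|v\|_\infty$ is small (uniformly in $\xi$ over the compact interval), so \eqref{un.Geig} is inherited from $v=0$. This completely sidesteps the step you yourself flag as ``genuinely delicate'' — extending the virial/Rellich identity to the distorted-plane-wave farfield operator — which in your outline is left unproved. Your route would give a more self-contained argument if that identity were worked out, but the paper's two-step reduction is shorter because the $v=0$ monotonicity is already in the literature and the perturbation machinery \cref{un.prp.sch} is already established. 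One small wording point: ``at the same rate in opposite senses'' is more than you need; opposite senses alone suffices.
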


\begin{proof}
\textit{Part I.} We first consider the case $v=0$. It follows from \cref{ge.lem.sct} together with the equality $\psi^+_0(x,k\omega) = e^{ik\omega x}$ that the operator $\widetilde{\mathcal G}_0^*(k)$ is the farfield operator for the classical obstacle scattering problem with obstacle $\Omega$. Moreover,
\begin{equation*}
  S(k) := \mathrm{Id} - 2i \overline{\widetilde{\mathcal G}^*_0(k)},
\end{equation*}  
is the scattering matrix in the sense of \cite{HR:76}. It follows from \cite[(2.1) and the remark after (1.9)]{HR:76} that all the eigenvalues $\lambda \neq 1$ of $S(k)$ move in the counter-clockwise direction on the circle $|z|=1$ in $\mathbb C$ continuously and with strictly positive velocities as $k$ grows. More precisely, if $\lambda(k) = e^{i\beta(k)}$, $\lambda(k) \neq 1$ is an eigenvalue of $S(k)$ corresponding to the normalized eigenfunction $g(\cdot,k)$, then
\begin{gather*}
	\frac{\partial \beta}{\partial k}(k) = \frac{1}{4\pi}\left(\frac{k}{2\pi}\right)^ {d-2}\int_{\partial\Omega} \left| \frac{\partial f}{\partial \nu_x}(x,k) \right|^2 x \nu_x \, ds(x), \\ 
	f(x,k) = \int_{S^{d-1}} g(\theta,k) \bigl( e^{-ik\theta x} - u(x,\theta) \bigr)\, ds(\theta), \quad x \in \mathbb R^d \setminus \Omega,
\end{gather*}
where $u(x,\theta)$ is the solution of problem \cref{eqs:extBVP} with $u_0(x) = e^{-ik\theta x}$
(note that \cite{HR:76} uses a different sign convention in the radiation condition \eqref{eq:SRC} resulting 
in a different sign of $\partial\beta/\partial k$). 
It follows from this formula that $\frac{\partial \beta}{\partial k}(k)>0$:
\begin{enumerate}
	\item the term $x \eta_x $ is positive by assumption,
	\item $\frac{\partial f}{\partial \nu_x}$ cannot vanish on $\partial\Omega$ identically. Otherwise, $f$ vanishes on the boundary together with its normal derivative, and Huygens' principle (see \cite[Thm.3.14]{CK:13}) 
	implies that the scattered field 
$
	\int_{S^{d-1}} g(\theta,k) u(x,\theta) \, ds(\theta)
$
vanishes identically, so that $f$ is equal to
$
	f(x,k) = \int_{S^{d-1}} g(\theta,k) e^{-ik\theta x} \, ds(\theta).
$
One can see from this formula that $f$ extends uniquely to an entire solution of $-\Delta f = k^2 f$. Moreover, $f$ is a Dirichlet eigenfunction for $\Omega$ and it implies that $f$ is identically zero, because $k^2$ is not a Dirichlet eigenvalue of $-\Delta$ in $\Omega$ by assumption. Now it follows \cite[Thm.3.19]{CK:13} that the Herglotz kernel $g(\cdot,k)$ of $f$ vanishes, but it contradicts the fact that $g(\cdot,k)$ is a normalized eigenfunction of $S(k)$.
\end{enumerate}
It follows that all the non-zero eigenvalues of $\widetilde{\mathcal G}_0^*(k)$ move continuously in the clockwise direction on the circle $|z+i/2| = 1/2$ in $\mathbb C$ with non-zero velocities as $k$ grows. Moreover, since $\widetilde{\mathcal G}^*_0(k)$ is compact in $L^2(S^{d-1})$ (see \cref{re.thm.AtBtr}), it follows that $z=0$ is the only accumulation point for eigenvalues of $\widetilde{\mathcal G}^*_0(k)$. This together with \cref{un.prp.hel} for $\kappa = 0$, $\omega = k$, implies that there exist $\delta(k,\Omega) > 0$, $\varepsilon(k,\Omega)>0$ such that all the eigenvalues $\lambda$ of $\widetilde{\mathcal G}^*_0(\xi)$ with $\Re \lambda < 0$ belong to the half plane $\Im z < -\delta$ for $\xi \in [k,k+\varepsilon)$. 

This proves \cref{un.lem.gen} with $v = 0$ if we take into account that the eigenvalues of $\widetilde{\mathcal G}_0^*(k)$ and $\widetilde{\mathcal G}_0(k)$ are related by complex conjugation.

\textit{Part II.} Let $k$ be such that \cref{un.Geig} holds true for $v=0$ and choose $\delta(k,\Omega)$, $\varepsilon(k,\Omega)$ as in the first part of the proof. Now let $v \in L^\infty(\Omega,\mathbb R)$. It follows from \cref{un.prp.sch} that for any $\sigma > 0$ there exists $M=M(\xi,\sigma)$ such that if $\|v\|_\infty \leq M$, then $\widetilde{\mathcal G}^*_v(\xi)$ has a finite number of eigenvalues $\lambda$ with $\Re \lambda < 0$, multiplicities taken into account, and these eigenvalues belong to the $\sigma$-neighborhood of the eigenvalues of $\widetilde{\mathcal G}_0^*(\xi)$ if $\xi \in [k,k+\varepsilon)$. In addition, $M(\xi,\sigma)$ can be choosen depending continuously on $\xi$. Hence, \cref{un.Geig} holds true for $v$ if it holds true for $v=0$ and $\sigma$ is sufficiently small. This together with part I finishes the proof of \cref{un.lem.gen} for a general $v$ if we take into account that the eigenvalues of $\widetilde{\mathcal G}_v^*(k)$ and $\widetilde{\mathcal G}_v(k)$ are related by complex conjugation.
\end{proof}

\begin{remark}\label{rem:injectivity_ass} It follows from analytic Fredholm theory (see, e.g., \cite[Cor. 3.3]{GS:71}) and 
\cref{re.lem.Fred} below that the condition that $\Re{\mathcal G}_{v_0}(k)$ be injective in $H^{-1/2}(\partial\Omega)$ is ``generically'' satisfied. More precisely, it is either satisfied for all but a discrete set 
of $k>0$ without accumulation points or it is violated for all $k>0$. Applying analytic Fredholm theory again 
to $z\mapsto \Re{\mathcal G}_{z^2v_0}(zk)$ and taking into account Proposition \ref{un.prp.hel}, 
we see that the latter case may at most occur for a discrete set of $z>0$ without accumulation points. 
\end{remark}

\begin{remark}\label{rem:injectivity_circ}
In the particular case of $v_0 = 0$, $\Omega = \{ x \in \mathbb R^d \colon |x| \leq R\}$, $d = 2$, $3$, the injectivity of $\Re {\mathcal G}_{v_0}(k)$ in $H^{-1/2}(\partial\Omega)$ is equivalent to the following finite number of inequalities:
\begin{equation}\label{un.jlyln0}
\begin{aligned}
   &j_l(kR) \neq 0 \text{ and } y_l(kR) \neq 0 && \text{for} \quad 0 \leq l < kR - \tfrac \pi 2, \quad d = 3, \\
   &J_l(kR) \neq 0 \text{ and }  Y_l(kR) \neq 0 &&\text{for} \quad 0 \leq l <  kR - \tfrac{\pi-1}{2}, \quad d = 2
\end{aligned}
\end{equation}
where $j_l$, $y_l$ are the spherical Bessel functions and $J_l$, $Y_l$ are the Bessel functions of integer order $l$. The reason is that the eigenvalues of $\Re {\mathcal G}_{v_0}(k)$ are explicitly computable in this case, see, e.g., \cite[p.104 \& p.144]{devaney:12}.
\end{remark}

\section{Conclusions}
In this paper we have presented, in particular, first local uniqueness results for inverse coefficient problems in wave equations with data given the imaginary part of Green's function on the boundary of a domain at a fixed frequency. In the case of local helioseismology it implies that small deviations of density and sound speed from the solar reference model are uniquely determined by correlation data of the solar surface within the given model.

The algebraic relations between the real and the imaginary part of Green's function established in this paper can probably be extended to other wave equations. An important limitation of the proposed technique, however, is that it is not applicable in the presence of absorption. 

To increase the relevance of uniqueness results as established in this paper to helioseismology and other applications, many of the improvements achieved for standard uniqueness results would be desirable: This includes stability results or even variational source conditions to account for errors in the model and the data, the use of many and higher wave numbers to increase stability, and results for data given only on part of the surface.

\bibliographystyle{siamplain}
\bibliography{green_art}

\begin{thebibliography}{10}

\bibitem{AN:15}
{\sc A.~D. Agaltsov and R.~G. Novikov}, {\em Uniqueness and non-uniqueness in
  acoustic tomography of moving fluid}, Journal of Inverse and Ill-Posed
  Problems, 24 (2015), pp.~333--340.

\bibitem{agmon:75}
{\sc S.~Agmon}, {\em Spectral properties of {S}chr\"odinger operators and
  scattering theory}, Annali della Scuola Normale Superiore di Piza, 2 (1975),
  pp.~151--218.

\bibitem{berezanskii:58}
{\sc Y.~M. Berezanskii}, {\em On the uniqueness theorem in the inverse problem
  of spectral analysis for the {S}chr\"odinger equation}, Tr. Mosk. Mat.
  Obshch., 7 (1958), pp.~3--62.

\bibitem{bukhgeim:08}
{\sc A.~L. Bukhgeim}, {\em Recovering a potential from {C}auchy data in the
  two-dimensional case}, J. Inv Ill-Posed Problems, 16 (2008), pp.~19--33.

\bibitem{BSS:07}
{\sc V.~A. Burov, S.~N. Sergeev, and A.~A. Shmelev}, {\em The possibility of
  reconstructing the seasonal variability of the ocean using acoustic
  tomography methods}, Acoustical Physics, 53 (2007), pp.~257--267,
  \url{https://doi.org/10.1134/S1063771007030037}.

\bibitem{BSS:08}
{\sc V.~A. Burov, S.~N. Sergeev, and A.~S. Shurup}, {\em The use of
  low-frequency noise in passive tomography of the ocean}, Acoustical Physics,
  54 (2008), pp.~42--51, \url{https://doi.org/10.1134/S1063771008010077}.

\bibitem{cain:80}
{\sc B.~E. Cain}, {\em Inertia theory}, Linear Algebra and Its Applications, 30
  (1980), pp.~211--240.

\bibitem{CK:13}
{\sc D.~Colton and R.~Kress}, {\em Inverse Acoustic and Electromagnetic
  Scattering Theory}, Springer, Berlin, Heidelberg, New York, third~ed., 2013.

\bibitem{devaney:12}
{\sc A.~J. Devaney}, {\em Mathematical Foundations of Imaging, Tomography and
  Wavefield Inversion}, Cambridge University Press, Cambridge, 2012.

\bibitem{FKSU:07}
{\sc D.~Dos Santos~Ferreira, D.~Kenig, J.~Sj{\"o}strand, and G.~Uhlmann}, {\em
  Determining a magnetic {S}chr{\"o}dinger operator from partial {C}auchy
  data}, Comm. Math. Phys., 271 (2007), pp.~461--488.

\bibitem{FM:85}
{\sc L.~D. Faddeev and S.~P. Merkuriev}, {\em Quantum Scattering Theory for
  Multi-particule Systems}, Nauka, Moscow, 1985.

\bibitem{GP:09}
{\sc J.~Garnier and G.~Papanicolaou}, {\em Passive sensor imaging using cross
  correlations of noisy signals in a scattering medium}, SIAM J. Imaging Sci.,
  2 (2009), pp.~396--437, \url{https://doi.org/10.1137/080723454}.

\bibitem{gizon_etal:17}
{\sc L.~Gizon, H.~Barucq, M.~Durufl\'{e}, C.~Hanson, M.~Legu\`{e}be, A.~Birch,
  J.~Chabassier, D.~Fournier, T.~Hohage, and E.~Papini}, {\em Computational
  helioseismology in the frequency domain: acoustic waves in axisymmetric solar
  models with flows}, Astronomy \& Astrophysics, 600 (2017), p.~A35,
  \url{https://doi.org/10.1051/0004-6361/201629470}.

\bibitem{GBS:10}
{\sc L.~Gizon, A.~Birch, and H.~Spruit}, {\em Local helioseismology:
  Three-dimensional imaging of the solar interior}, The Annual Review of
  Astronomy and Astrophysics, 48 (2010), pp.~289--338.

\bibitem{GS:71}
{\sc I.~T. Gokhberg and E.~I. Sigal}, {\em An operator generalization of the
  logarithmic residue theorem and the theorem of {R}ouch\'e}, Mat. Sb., 84
  (1971), pp.~607--629.

\bibitem{HH:01}
{\sc P.~H\"ahner and T.~Hohage}, {\em New stability estimates for the inverse
  acoustic inhomogeneous medium problem and applications}, SIAM J. Math. Anal.,
  62 (2001), pp.~670--685.

\bibitem{HGS:16}
{\sc S.~Hanasoge, L.~Gizon, and K.~R. Sreenivasan}, {\em Seismic sounding of
  convection in the sun}, Annual Review of Fluid Mechanics, 48 (2016),
  pp.~191--217, \url{https://doi.org/10.1146/annurev-fluid-122414-034534}.

\bibitem{HR:76}
{\sc J.~W. Helton and J.~V. Ralston}, {\em The first variation of the
  scattering matrix}, Journal of Differential Equations, 21 (1976),
  pp.~378--394, \url{https://doi.org/10.1016/0022-0396(76)90127-3}.

\bibitem{hoermander:83}
{\sc L.~H\"ormander}, {\em The Analysis of Linear Partial Differential
  Operators II}, Springer, Berlin Heidelberg, 1983.

\bibitem{IN:13}
{\sc M.~Isaev and R.~Novikov}, {\em New global stability estimates for
  monochromatic inverse acoustic scattering}, SIAM Journal on Mathematical
  Analysis, 45 (2013), pp.~1495--1504.

\bibitem{kato:80}
{\sc T.~Kato}, {\em Perturbation Theory for Linear Operators}, Springer-Verlag,
  Berlin, Heidelberg, New-York, 1980.

\bibitem{mclean:00}
{\sc W.~McLean}, {\em Strongly elliptic systems and boundary integral
  equations}, Cambridge University Press, Cambridge, 2000.

\bibitem{melrose:95}
{\sc R.~B. Melrose}, {\em Geometric Scattering Theory}, Cambridge University
  Press, Cambridge, 1995.

\bibitem{nachman:88}
{\sc A.~I. Nachman}, {\em Reconstructions from boundary measurements}, Ann. of
  Math. (2), 128 (1988), pp.~531--576, \url{https://doi.org/10.2307/1971435}.

\bibitem{novikov:88}
{\sc R.~G. Novikov}, {\em A multidimensional inverse spectral problem for the
  equation {$-\Delta\psi +(v(x)-Eu(x))\psi=0$}}, Funktsional. Anal. i
  Prilozhen., 22 (1988), pp.~11--22, 96,
  \url{https://doi.org/10.1007/BF01077418}.

\bibitem{PS:02}
{\em Scattering}, Academic Press, London, 2002,
  \url{https://doi.org/10.1016/B978-012613760-6/50103-0}.

\bibitem{RSKR:05}
{\sc P.~Roux, K.~G. Sabra, W.~A. Kuperman, and A.~Roux}, {\em Ambient noise
  cross correlation in free space: Theoretical approach}, The Journal of the
  Acoustical Society of America, 117 (2005), pp.~79--84.

\bibitem{snieder:07}
{\sc R.~Snieder}, {\em Extracting the {G}reen's function of attenuating
  heterogeneous acoustic media from uncorrelated waves}, Acoustical Society of
  America Journal, 121 (2007), p.~2637,
  \url{https://doi.org/10.1121/1.2713673}.

\bibitem{SL:13}
{\sc R.~Snieder and E.~Larose}, {\em Extracting earth's elastic wave response
  from noise measurements}, Annual Review of Earth and Planetary Sciences, 41
  (2013), pp.~183--206,
  \url{https://doi.org/10.1146/annurev-earth-050212-123936}.

\bibitem{snieder_etal:09}
{\sc R.~Snieder, M.~Miyazawa, E.~Slob, I.~Vasconcelos, and K.~Wapenaar}, {\em A
  comparison of strategies for seismic interferometry}, Surveys in Geophysics,
  30 (2009), pp.~503--523, \url{https://doi.org/10.1007/s10712-009-9069-z}.

\bibitem{stefanov:90}
{\sc P.~Stefanov}, {\em Stability of the inverse problem in potential
  scattering at fixed energy}, Ann. Inst. Fourier, Grenoble, 40 (1990),
  pp.~867--884.

\bibitem{taylor:11}
{\sc M.~Taylor}, {\em Partial Differential Equations II}, Springer, New-York,
  Dordrecht, Heidelberg, London, second~ed., 2011.

\bibitem{WL:01}
{\sc R.~L. Weaver and O.~I. Lobkis}, {\em Ultrasonics without a source: Thermal
  fluctuation correlations at mhz frequencies}, Physical Review Letters, 87
  (2001), p.~134301.

\end{thebibliography}
\end{document}